\newtheorem{theorem}{Theorem}[section]
\newtheorem{lemma}[theorem]{Lemma}
\newtheorem{corollary}[theorem]{Corollary}
\newtheorem{proposition}[theorem]{Proposition}
\theoremstyle{definition}
\newtheorem{definition}[theorem]{Definition}
\newtheorem{example}[theorem]{Example}
\newtheorem{problem}[theorem]{Problem}
\newtheorem{remark}[theorem]{Remark}
\numberwithin{equation}{section}
\newcommand{\B}{\mathbb{B}}
\newcommand{\C}{\mathbb{C}}
\newcommand{\D}{\mathbb{D}}
\newcommand{\N}{\mathbb{N}}
\newcommand{\Z}{\mathbb{Z}}
\renewcommand{\P}{\mathbb{P}}
\newcommand{\R}{\mathbb{R}}
\renewcommand{\d}{\mathrm{d}}
\newcommand\T{\mathrm{T}}
\newcommand\Id{\mathrm{Id}}
\newcommand{\cA}{\mathcal{A}}
\newcommand{\cC}{\mathcal{C}}
\newcommand{\cL}{\mathcal{L}}
\newcommand{\cO}{\mathcal{O}}
\newcommand{\cU}{\mathcal{U}}
\newcommand{\ggoth}{{\ensuremath{\mathfrak{g}}}}
\newcommand{\Agoth}{{\ensuremath{\mathfrak{A}}}}
\newcommand{\Bgoth}{{\ensuremath{\mathfrak{B}}}}
\newcommand{\Cgoth}{{\ensuremath{\mathfrak{C}}}}
\newcommand{\I}{{\ensuremath{\mathfrak{i}}}}
\newcommand\wt{\widetilde}
\newcommand\hra{\hookrightarrow}
\newcommand\Ker{\mathrm{ker}\,}
\newcommand\di{\partial}
\newcommand\dist{\mathrm{dist}}
\newcommand\dibar{\overline\partial}
\newcommand\bs{\backslash}
\newcommand\spsh{strongly plurisubharmonic}
\newcommand\spsc{strongly pseudoconvex}
\newcommand\wh{\widehat}
\newcommand\rank{\mathrm{rank}}
\newcommand\codim{\mathrm{codim}\,}
\newcommand\reg{\mathrm{reg}}
\newcommand\sing{\mathrm{sing}}
\newcommand{\ol}{\overline}
\newcommand{\lra}{\longrightarrow}
\newcommand\SPCP{strongly pseudoconvex Cartan pair}
\newcommand\CAP{{\rm CAP}}
\newcommand\POP{{\rm POP}}
\begin{document}
\title{Oka manifolds: From Oka to Stein and back}
\author[Franc Forstneri\v c]{Franc Forstneri\v c$^1$}
\footnote{With an appendix by Finnur L\'arusson}

\address{Franc Forstneri\v c, Faculty of Mathematics and Physics, University of Ljubljana, and Institute of Mathematics, Physics and Mechanics, Jadranska 19, 1000 Ljubljana, Slovenia}
\email{franc.forstneric@fmf.uni-lj.si}

\address{Finnur L\'arusson, School of Mathematical Sciences, University of Adelaide, Adelaide SA 5005, Australia}
\email{finnur.larusson@adelaide.edu.au}

\thanks{The author was supported by the grant P1-0291 from ARRS, Republic of Slovenia.  Finnur L\'arusson was supported by Australian Research Council grant DP120104110.}

\subjclass[2010]{Primary 32E10, 32M12.  Secondary 14M17, 18G55, 32E30, 32H02, 32Q28, 55U35.}
\date{20 June 2013}

\keywords{Oka principle, Stein manifold, elliptic manifold, Oka manifold, Oka map, subelliptic submersion, model category}

\begin{abstract}
Oka theory has its roots in the classical Oka-Grauert principle whose main result is Grauert's  classification of principal holomorphic fiber bundles over Stein spaces.  Modern Oka theory concerns holomorphic maps from Stein manifolds and Stein spaces to Oka manifolds. It has emerged as a subfield of complex geometry in its own right since the appearance of a seminal paper of M.\ Gromov in 1989. 

In this expository paper we discuss Oka manifolds and Oka maps.  We describe equivalent characterizations of Oka manifolds, the functorial properties of this class, and geometric sufficient conditions for being Oka, the most important of which is Gromov's ellipticity. We survey the current status of the theory in terms of known examples of Oka manifolds, mention open problems and outline the proofs of the main results. In the appendix by F.\ L\'arusson it is explained how Oka manifolds and Oka maps, along with Stein manifolds, fit into an abstract homotopy-theoretic framework.  

The article is an expanded version of lectures given by the author at Winter School KAWA 4 in Toulouse, France, in January 2013. A comprehensive exposition of Oka theory is available in the monograph \cite{FF:book}.
\end{abstract}

\maketitle
\tableofcontents


\section{Introduction}
\label{sec:Intro}
Oka theory is about a tight relationship between homotopy theory and complex geometry involving {\em Stein manifolds} and {\em Oka manifolds}. It has a long and rich history, beginning with Kiyoshi Oka in 1939, continued by Hans Grauert and the German  school in the late 1950's and 1960's, revitalized by Mikhael Gromov in 1989, and leading to an introduction and systematic study of {\em Oka manifolds} and {\em Oka maps} in the last decade.

The heuristic {\em Oka principle} says that there are only topological obstructions to solving complex-analytic problems on Stein spaces that can be cohomologically, or even homotopically, formulated. A classical example is the {\em Oka-Grauert principle} (Grauert \cite{Grauert3}; see also Cartan \cite{Cartan1958} and Henkin and Leiterer \cite{Henkin-Leiterer:Oka}): For any complex Lie group $G$, the holomorphic classification of principal $G$-bundles over any Stein space agrees with their topological classification. The same holds for fiber bundles with $G$-homogeneous fibers; in particular, for complex vector bundles (take $G=GL_k(\C)$). The special case of line bundles $(k=1,\ G=\C^*=\C\setminus\{0\})$ is a theorem of Oka \cite{Oka1939} from 1939 which marks the beginning of Oka theory. 

Since a fiber bundle is defined by a 1-cocyle of transition maps, it is not surprising that the original formulation of the Oka-Grauert principle is cohomological. However, it was already observed by Henri Cartan \cite{Cartan1958} in 1958 (the year of publication of Grauert's main paper \cite{Grauert3} on this subject) that the result can be phrased in terms of the existence of holomorphic sections $X\to Z$ of certain associated fiber bundles $\pi\colon Z\to X$ with Lie group fibers over a Stein base $X$. More precisely, the key problem is to find a holomorphic section homotopic to a given continuous section. 

It is this homotopy-theoretic point of view that was adopted and successfully exploited by Mikhail Gromov in his seminal paper \cite{Gromov:OP} in 1989. (A complete exposition of his work first appeared in \cite{Forstneric-Prezelj2000, Forstneric-Prezelj2001, Forstneric-Prezelj2002}.) This change of philosophy, together with the introduction of substantially weaker sufficient conditions, liberated the Oka principle from the realm of fiber bundles with homogeneous fibers, thereby making it much more flexible and substantially more useful in applications. In particular, a proof of the embedding theorem for Stein manifolds into Euclidean spaces of minimal dimension, due to Eliashberg and Gromov \cite{Eliashberg-Gromov1, Eliashberg-Gromov2} and Sch\"urmann \cite{Schurmann}, became viable only in the wake of Gromov's Oka principle. For this and other applications see \cite[Chap.\ 8]{FF:book}.  

The modern Oka principle focuses on those analytic properties of a complex manifold $Y$ which ensure that every continuous map $X\to Y$ from a Stein space $X$ is homotopic to a holomorphic map, with certain natural additions (approximation, interpolation, the inclusion of a parameter) that are motivated by classical function theory on Stein spaces. Specifically, we say that a complex manifold $Y$ enjoys the {\em weak homotopy equivalence principle} if for every Stein space $X$, the inclusion $\iota \colon \cO(X,Y) \hra\cC(X,Y)$ of the space of all holomorphic maps $X\to Y$ into the space of all continuous maps is a {\em weak homotopy equivalence} with respect to the compact-open topology, that is, $\iota$ induces isomorphisms of all homotopy groups:
\begin{equation}
\label{eq:whe}
	\pi_k(\iota) \colon \pi_k( \cO(X,Y)) \stackrel{\cong}{\lra} \pi_k(\cC(X,Y)),\quad k=0,1,2,\ldots.
\end{equation}

The analogous questions are considered for sections of holomorphic submersions $\pi\colon Z\to X$ onto Stein spaces $X$. Gromov's main result in \cite{Gromov:OP} is that the existence of a {\em holomorphic fiber-dominating spray} on $Z|_U:=\pi^{-1}(U)$ over small open subsets $U$ of a Stein base space $X$ implies all forms of the Oka principle for sections $X\to Z$ (Theorem \ref{SES:OP} in \S \ref{ss:ellipticsub} below). Submersions with this property are said to be {\em elliptic}. In particular, a complex manifold $Y$ with a dominating holomorphic spray -- an {\em elliptic manifold} -- enjoys all forms of the Oka principle for maps $X\to Y$ from Stein spaces.
Although ellipticity is a useful geometric sufficient condition for validity of the Oka principle, it is still not clear whether it is also necessary, and not many interesting functorial properties have been discovered for the class of elliptic manifolds. 

In the papers \cite{FF:CAP,FF:EOP,FF:OkaManifolds} it was proved that a complex manifold $Y$ satisfies the Oka principle (with approximation, interpolation, parametric)  if (and only if) it has the so-called {\em convex approximation property} (CAP) that was first introduced in \cite{FF:CAP}: 

CAP of $Y$: {\em Any holomorphic map $K\to Y$ from a compact convex set $K$ in $\C^n$ to $Y$ is a uniform limit of entire maps $\C^n\to Y$}. 

(See Def.\ \ref{def:CAP} and Theorem \ref{th:CAP} below.) This in particular answered a question raised by Gromov \cite[3.4.(D), p.\ 881]{Gromov:OP}. A complex manifold enjoying these equivalent properties is said to be an {\em Oka manifold}, a term that was first introduced in \cite{FF:OkaManifolds} (see also \cite{Larusson2}). Every complex homogeneous manifold is an Oka manifold, so Theorem \ref{th:CAP} includes the classical Oka-Grauert theory. In the wake of this simple characterization it became clear, mainly through the work of F.\ L\'arusson and the author, that the class of Oka manifolds satisfies several nontrivial functorial properties; see \S \ref{ss:examples}. The picture was completed in \cite{FF:OkaMaps} where the analogous results were obtained for the class of {\em Oka maps}; see \S \ref{ss:Okamaps}. 

F.\ L\'arusson constructed an underlying model structure that shows that the Oka property is, in a precise sense, homotopy-theoretic; see \S\ref{sec:Appendix}. 

Although this expository article is largely based on the recent monograph \cite{FF:book}, we use this occasion to survey the developments since its publication in August 2011. Here is a sampling of new results: examples due to A.\ Hanysz of Oka hyperplane complements in projective spaces (\S \ref{ss:hypersurfaces}); F.\ L\'arusson's proof that smooth toric varieties are Oka (\S \ref{ss:toric}); the notion of a stratified Oka manifold and related results on Kummer surfaces (\S \ref{ss:stratOka}); a survey of the Oka property for compact complex surfaces, with emphasis on class VII (\S \ref{ss:Class7}). There is a growing list of applications of the modern Oka principle for which the older theory does not suffice; see Chapters 7 and 8 of \cite{FF:book} and the preprint \cite{Alarcon-Forstneric} on {\em directed immersions} of open Riemann surfaces into Euclidean spaces.


\section{Oka manifolds, Oka maps, and elliptic submersions}
\label{sec:OM}
The main theme of modern Oka theory is the study of those complex analytic properties of a complex manifold $Y$ which say that there exist `many' holomorphic maps $X \to Y$ from any Stein manifold (or Stein space) $X$. These {\em Oka properties} indicate that $Y$ is in a certain sense holomorphically large, or, as we prefer to say, {\em holomorphically flexible}. Oka properties are modeled upon the classical results concerning holomorphic functions on Stein manifolds, considered as maps $X\to Y=\C$ to the complex number field. They are opposite to holomorphic rigidity properties, the latter being commonly expressed by various types of hyperbolicity. 

We begin in \S \ref{ss:flex} with a survey of the main hyperbolicity and flexibility properties. In \S \ref{ss:Oka} we introduce our basic flexibility property, the {\em Convex Approximation Property} (CAP), which is essentially dominability with approximation on compact geometrically convex sets in $\C^n$. We then state our main result, Theorem \ref{th:CAP}, to the effect that CAP implies, and hence is equivalent to, all Oka properties that have been considered in the literature. A complex manifold satisfying these equivalent properties is said to be an {\em Oka manifold}. The proof of Theorem \ref{th:CAP} is outlined in \S \ref{sec:methods}. In \S \ref{ss:examples}--\S \ref{ss:Class7} we give examples and functorial properties of the class of Oka manifolds. In \S \ref{ss:ellipticsub} we describe Gromov's Oka principle for sections of elliptic submersions over Stein spaces. A proof of this result and generalizations can be found in \cite[Chap.\ 6]{FF:book} and in the papers \cite{Forstneric-Prezelj2001, Forstneric-Prezelj2002,FF:Subelliptic,FF:Kohn}.

Sections \ref{sec:OM} and \ref{sec:methods} together constitute an abridged version of Chapters 5 and 6 in author's monograph \cite{FF:book}, but with the addition of some new results.

\subsection{Complex manifolds: flexibility versus rigidity}
\label{ss:flex}
A main feature distinguishing complex geometry from smooth geometry is the phenomenon of {\em holomorphic rigidity}. 

In smooth geometry there is no rigidity, unless we introduce some additional structure (a Riemannian metric, a symplectic structure, a contact structure, etc.). Indeed, every continuous map between smooth manifolds can be approximated in all natural topologies by smooth maps; the analogous statement also holds in the real-analytic category. In particular, every homotopy class of maps is represented by a smooth map. 

This is not at all the case in the holomorphic category. For example, Liouville's theorem tells us that there are no nonconstant holomorphic maps of $\C$ to the disc $\D=\{z\in \C\colon |z|<1\}$, or to any bounded domain in $\C$. Furthermore, Picard's theorem says that there are no nonconstant holomorphic maps $\C\to\C\setminus\{0,1\}$. Looking at annuli $A_r=\{z\in \C\colon 1/r <|z|< r\}$ for $1< r \le\infty$, there is a holomorphic map $A_r\to A_R$ of degree $k\in\Z$ (in the homotopy class of the map $z\mapsto z^k/|z|^k$) if and only if $r^{|k|}\le R$; in this case the map $z\mapsto z^k$ is such. Looking at maps to higher dimensional manifolds, {\em Green's theorem} \cite{Green} says that every holomorphic map $\C\to \C\P^n$ whose range omits $2n+1$ hyperplanes in general position is a constant map.

The quantitative version of holomorphic rigidity of the disc $\D$ is expressed by the classical {\em Schwarz-Pick lemma}: Every holomorphic map $f\colon \D\to\D$ satisfies 
\[
	\frac{|\d f(z)|}{1-|f(z)|^2} \le \frac{|\d z|}{1-|z|^2},\quad z\in\D. 
\]
In particular, $|f'(0)|\le 1-|f(0)|^2$. This inequality says that holomorphic self-mappings of the disc $\D$ are distance decreasing in the Poincar\'e metric on $\D$; orientation preserving isometries are precisely the holomorphic automorphisms of $\D$.
 
These and other classical holomorphic rigidity theorems lead to the notion of a hyperbolic manifold.
A complex manifold $Y$ is said to be 

\begin{itemize}
\item[---]
{\em Brody hyperbolic} \cite{Brody} if every holomorphic map $\C\to Y$ is constant;

\item[---]
{\em Brody volume hyperbolic} if every holomorphic map $\C^n\to Y$ $(n=\dim Y)$ has rank $<n$ at each point (a degenerate map);

\item[---]
{\em  Kobayashi (complete) hyperbolic}  \cite{Kobayashi1,Kobayashi2} if the {\em Kobayashi pseudometric} $k_Y$ is a (complete) metric on $Y$. 
\end{itemize}

Recall that $k_Y$ is the integrated form of the infinitesimal pseudometric
\[
	|v| = \inf\left\{ 
	\frac{1}{|\lambda|} \colon \ f\colon \D\to Y\ \hbox{holomorphic},\ f(0)=y,\ f'(0)=\lambda v \right\},
	\quad v\in T_y Y.
\]
A Kobayashi hyperbolic manifold is also Brody hyperbolic;  the converse holds for compact manifolds (Brody \cite{Brody}). Furthermore, for every integer $k\in \{1,\ldots,\dim Y\}$ there is a quantitative notion of {\em Eisenman $k$-hyperbolicity} (Eisenman \cite{Eisenman}); for $k=1$ it coincides with Kobayashi hyperbolicity.  

Most complex manifolds have at least some amount of rigidity. In particular, every compact complex manifold $Y$ of general type (i.e., of Kodaira dimension $\kappa_Y=\dim Y$) is volume hyperbolic (Kobayashi and Ochiai \cite[Theorem~2]{Kobayashi-Ochiai}), and it is believed to be almost Kobayashi hyperbolic, in the sense that it admits a proper complex subvariety $Y'$ which contains the image of any nonconstant entire map $\C\to Y$. 

Contrary to hyperbolicity, holomorphic flexibility properties should signify the existence of many holomorphic maps $\C\to Y$ and, more generally, $\C^n \to Y$ for any $n\ge 1$. 

The key is to specify the precise meaning of the word {\em many} in this context. 

For example, taking $n=1$, one can ask that for every point $y\in Y$ and tangent vector $v\in T_y Y$ there exist an entire map $f\colon \C\to Y$ with $f(0)=y$ and $f'(y)=v$. Not much seems known about this particular notion of anti-hyperbolicity. One can also demand that any pair of points in $Y$ can be connected by a complex line $\C\to Y$; such a manifold is said to be {\em strongly $\C$-connected}. A weaker version of $\C$-connectedness is that any pair of points is connected by a finite chain of entire lines, in analogy with {\em rational connectedness} where $\C$ is replaced by $\C\P^1$.  

A stronger flexibility property, which is opposite to Brody volume hyperbolicity, is {\em dominability}. A connected complex manifold $Y$ is said to be {\em dominable} if there exists a holomorphic map $f\colon \C^n\to Y$ ($n=\dim Y$) such that $\rank_z f=n$ at some point $z\in \C^n$ (and hence at most points). If $f$ can be chosen so that $f(0)=y\in Y$ and $\rank_0 f=n$, then $Y$ is said to be {\em dominable at $y$}; it is {\em strongly dominable} if it is dominable at each point. The theorem of Kobayashi and Ochiai \cite{Kobayashi-Ochiai} then says that a compact complex manifold $Y$ of general type is not dominable.

If we strengthen dominability even further by demanding the existence of a holomorphically varying family of dominating maps $f_y\colon \C^n\to Y$ ($y\in Y$) with $f_y(0)=y$, we arrive at Gromov's notion of a {\em dominating holomorphic spray} on $Y$ (see \S \ref{ss:elliptic} below). A complex manifold which admits a dominating spray is said to be {\em elliptic}. In particular, every complex Lie group and every complex homogeneous manifold is elliptic; a dominating spray is provided by the exponential map.

Another source of motivation to study flexibility properties comes from the classical theory of holomorphic functions on Stein spaces $X$, which we treat as holomorphic mappings $X\to\C$. The {\em Oka-Weil theorem} extends the classical {\em Runge theorem}: given a compact holomorphically convex set $K$ in a Stein space $X$, every holomorphic function on a neighborhood of $K$ can be approximated, uniformly on $K$, by entire functions $X\to \C$.  Similarly, given a closed complex (hence Stein) subvariety $X'\subset X$, every holomorphic function $X'\to \C$ extends to a holomorphic function $X\to \C$; this is the {\em Cartan extension theorem}. These two results may be combined: given a function $f\colon  K\cup X' \to\C$ that is holomorphic on a subvariety $X'$ and on a neighborhood of a compact holomorphically convex set $K$ in a Stein space $X$, there is a holomorphic function $F\colon X\to \C$ which agrees with $f$ on $X'$ and approximates $f$ uniformly on $K$. 

It is now the time to turn the tables --- replacing the target $\C$ by an arbitrary complex manifold $Y$, these classical theorems of complex analysis define various {\em Oka properties} which $Y$ may or may not have. However, due to possibly nontrivial topology of both the source and the target manifold, the question must now be asked differently: whether the extension or the approximation of holomorphic maps $X\to Y$ is possible in the absence of topological obstructions. 

The most basic Oka property of a complex manifold $Y$ is that every continuous map $X\to Y$ from a Stein space $X$ is homotopic to a holomorphic map. This always holds when $Y$ is contractible; for example, for the unit disc. Winkelmann \cite{Winkelmann} completely classified this propery for maps $X\to Y$ between Riemann surfaces, even when the source $X$ is compact (i.e., non-Stein). Recently this property was studied under the name {\em h-principle}, abbreviated $hP(Y)$, by Campana and Winkelmann \cite{Campana-Win}.  Their main result, Theorem 6.1, says that a complex projective manifold which satisfies the h-principle is {\em special} in the sense of Campana \cite{Campana2004,Campana2004-2}. Being special is an anti-hyperbolicity or anti-general type notion defined for compact K\"ahler manifolds. Campana has proved that $Y$ is special if $Y$ is either dominable, rationally connected, or has Kodaira dimension $0$; on the other hand, no compact manifold of general type is special.  Campana has conjectured that a complex manifold is special if and only if it is $\C$-connected if and only if its Kobayashi pseudo-metric vanishes identically.

We now consider stronger Oka properties; see \cite[\S 5.15]{FF:book} for precise definitions. 

By adding the approximation condition, we ask that any continuous map $f:X\to Y$ from a Stein space $X$ that is holomorphic on a neighborhood of a compact $\cO(X)$-convex set $K\subset X$ be homotopic to a holomorphic map $X\to Y$ by a homotopy that is holomorphic near $K$ and uniformly as close as desired to $f$ on $K$. A manifold $Y$ which satisfies this property for any triple $(X,K,f)$ is said to enjoy the {\em basic Oka property with approximation}, BOPA. Similarly one defines {\em the basic Oka property with interpolation}, BOPI, and the combination of the two, BOPAI. A restricted version of BOPI is CIP, the {\em convex interpolation property}, introduced by F.\ L\'arusson \cite{Larusson5}. One can strengthen the interpolation condition along a subvariety to include jet interpolation; this gives properties BOPJI and BOPAJI. 

The corresponding properties for families of maps, depending continuously on a parameter, are called {\em parametric Oka properties}; so we speak of POPA, POPI, POPAI, etc. More precisely, assume that $P$ is a compact Hausdorff space (the parameter space) and $P_0$ is a closed subset of $P$. A complex manifold $Y$ enjoys the {\em parametric Oka property} (POP) if for every Stein space $X$, any continuous map $f\colon P\to \cC(X,Y)$ such that $f(P_0)\subset \cO(X,Y)$ can be deformed to a continuous map $f_1 \colon P\to \cO(X,Y)$ by a homotopy that is fixed on $P_0$:
\vskip -5mm
\[
	\xymatrix{ 
	  P_0  \ar[d]_{incl}  \ar[r] & \cO(X,Y) \ar[d]^{incl} \\
	  P    \ar[r]^{\!\!\!\!\!\! f}  \ar[ur]^{f_1}	 & \cC(X,Y) } 
\]
In some theorems (in particular, in Theorem \ref{th:CAP}) we restrict POP to parameter spaces which are Euclidean compacta; this restriction is inessential in most applications. By purely homotopy-theoretic arguments, the validity of POP for all pairs $P_0\subset P$ of Euclidean compacta (for a given manifold $Y$) implies POP also when $P_0$ is a subcomplex of a possibly infinite CW complex $P$ \cite[\S 16]{Larusson2}. Approximation (on a compact holomorphically convex set) and interpolation (on a closed complex subvariety) can naturally be included in this picture. The basic (non-para\-metric) case corresponds to $P=\{p\}$ a singleton and $P_0=\varnothing$. The {\em $1$-parametric Oka property} refers to the case $P=[0,1]$ and $P_0=\{0,1\}$; the conclusion is that any pair of holomorphic maps $f_0,f_1\colon X\to Y$ which are homotopic through continuous maps are also homotopic through holomorphic maps.

This is all very well, but are there any nontrivial examples of complex manifolds 
that satisfy these Oka properties? 

Indeed, Grauert proved back in 1958 \cite{Grauert1,Grauert2,Grauert3} that every complex Lie group and, more generally, every complex homogeneous manifold, enjoys all of the properties described above. (See Theorem \ref{CAP:Lie} below.) Grauert's results can be considered as the true beginning of Oka theory. Thirty years later, Gromov \cite{Gromov:OP} introduced the class of elliptic manifolds and proved that they also satisfy all Oka properties (see \S\ref{ss:elliptic} below). 

The continuation is an ongoing story. Reader, follow me to the main results of modern Oka theory.


\subsection{The convex approximation property and Oka manifolds}
\label{ss:Oka}
The simplest of several equivalent characterizations of Oka manifolds is the following Runge approximation property for maps from Euclidean spaces which was introduced in \cite{FF:CAP}. 

\begin{definition}
\label{def:CAP}  
A complex manifold $Y$ is said to enjoy the {\em convex approximation property}, $\CAP$, if every holomorphic map $f\colon K\to Y$ from (a neighborhood of) a compact convex set $K\subset \C^n$ can be approximated, uniformly on $K$, by entire maps $\C^n\to Y$.
\end{definition}

By fixing the integer $n\in\N$ in the above definition we get the $n$-dimensional {\em convex approximation property}, $\CAP_n$. Clearly $\CAP_{n+1}\Rightarrow \CAP_{n}$ for every $n=1,2,\ldots$; a manifold satisfies $\CAP$ if and only if it satisfies $\CAP_n$ for every $n$. (In \cite{FF:CAP} the testing class of convex sets $K\subset \C^n$ was restricted to a certain subclass consisting of {\em special convex sets}, mainly to facilitate the verification of $\CAP$ in concrete examples. We shall not bother here with this minor technical distinction.) We  notice that, to verify $\CAP$, it suffices to prove that for any pair of compact convex sets $K\subset Q \subset \C^n$, holomorphic maps $K\to Y$ can be approximated by holomorphic maps $Q\to Y$.

The following can be considered as one of the main results of modern Oka theory. For a more precise statement see \cite[Theorem 5.4.4, p.\ 193]{FF:book}. The parametric Oka property (POP) now refers to parameter spaces that are Euclidean compacta.

%
%
%
%
\begin{theorem}[\bf The main theorem]
\label{th:CAP}
If a complex manifold $Y$ enjoys $\CAP$, then maps $X\to Y$ from any reduced Stein space $X$ satisfy all forms of the Oka principle (with approximation, (jet-) interpolation, parametric). The same is true for sections $X\to Z$ of any stratified holomorphic fiber bundle $\pi\colon Z\to X$ over a Stein base $X$ all of whose fibers over different strata enjoy $\CAP$. 
\end{theorem}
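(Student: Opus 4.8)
The plan is to deduce every form of the Oka principle from $\CAP$ by an induction over a strictly plurisubharmonic exhaustion of the Stein source, with the analytic engine being a nonlinear gluing lemma on Cartan pairs. First I would reduce all the listed properties to a single uniform statement: given a continuous map $f\colon X\to Y$ that is already holomorphic near a compact $\cO(X)$-convex set $K$, and given $\epsilon>0$, there is a holomorphic map $F\colon X\to Y$, homotopic to $f$, close to $f$ on $K$, agreeing with $f$ to the prescribed jet order along a subvariety $X'$, and depending continuously on a parameter in the refined versions. Fixing a smooth strictly plurisubharmonic exhaustion $\rho$ on $X$ with sublevel sets $D_c=\{\rho<c\}$, I would build $F$ as a limit of holomorphic maps $f_j$ on an increasing sequence of \spsc\ domains $D_{c_0}\subset D_{c_1}\subset\cdots$ exhausting $X$, with each $f_{j+1}$ holomorphic on $D_{c_{j+1}}$, homotopic to $f_j$, and uniformly close to $f_j$ on $D_{c_j}$ so that the sequence converges.

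The inductive step splits into a noncritical and a critical case according to whether $\rho$ has a critical point in the shell $\{c_j\le\rho\le c_{j+1}\}$. In the \emph{noncritical} case the larger domain is obtained from the smaller by attaching finitely many small convex bumps, each producing a \SPCP\ $(A,B)$ with $A\cup B$ the enlarged domain and $A\cap B$ thin. Over a single bump, with $A\cap B\subset B$ both convex in a holomorphic chart, I would invoke $\CAP$ in the equivalent form noted after Definition \ref{def:CAP}: the map $f_j$, holomorphic near the convex set $A\cap B$, is approximated uniformly on $A\cap B$ by a holomorphic map $g$ on the larger convex set $B$. Then $f_j$ on $A$ and $g$ on $B$ agree to high accuracy on $A\cap B$, and the \textbf{gluing (splitting) lemma} merges them into one holomorphic map on $A\cup B$ close to $f_j$ on $A$. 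In the \emph{critical} case, passing a critical point of index $k$ attaches a $k$-handle and changes the topology of the sublevel set; here I would first perform a homotopic modification of the map over a totally real core disc of the handle, realizable holomorphically on a Stein neighborhood because the target imposes no obstruction beyond the homotopy class, and then revert to the noncritical procedure on the enlarged domain.

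I expect the genuine difficulty, the analytic heart of the whole argument, to be the gluing lemma on a Cartan pair. Two holomorphic maps that are $\epsilon$-close on $A\cap B$ differ there by a transition close to the identity in the target; writing this transition as $\gamma=\beta\circ\alpha^{-1}$ with $\alpha$ holomorphic on $A$ and $\beta$ on $B$, both close to the identity, lets one correct $f_j$ and $g$ to agree and hence glue. The splitting $\gamma\mapsto(\alpha,\beta)$ is a nonlinear Cousin-I problem whose linearization is the additive decomposition of a holomorphic map on $A\cap B$ into summands holomorphic on $A$ and on $B$, solved by a bounded linear $\dibar$-solution operator on the Cartan pair, with the nonlinearity absorbed by a Newton-type iteration that converges because the initial transition is close enough to the identity. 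Establishing these bounded solution operators, and their parametrized versions, is where the serious estimates lie; once they are available, the parametric case is handled by running the entire scheme over the parameter space $P$ (using that $P$ is a Euclidean compactum, so the parameter can be treated convexly and $\CAP$ supplies parametrized approximation), while interpolation along $X'$ is obtained by performing every splitting relative to the ideal sheaf of $X'$, i.e.\ with solution operators preserving vanishing to the prescribed jet order along $X'$.

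Finally I would pass from Stein manifolds to reduced Stein spaces and from maps to sections of stratified fiber bundles. For a singular reduced $X$ one embeds $X$ properly into some $\C^N$ and works with bounded $\dibar$-solution operators on the subvariety, which still exist over \spsc\ Cartan pairs in the ambient space, so the convex-approximation input of $\CAP$ and the gluing machinery carry over essentially verbatim. For sections $X\to Z$ of a stratified holomorphic fiber bundle $\pi\colon Z\to X$ whose fibers over the strata enjoy $\CAP$, I would stratify the base $X=X_0\supset X_1\supset\cdots\supset X_m=\varnothing$ by closed complex subvarieties so that over each difference $X_k\setminus X_{k+1}$ the bundle is an honest \hfb\ with $\CAP$ fiber, and argue by descending induction on the strata: construct the holomorphic section first over the lowest stratum and extend it outward one stratum at a time, each extension being an instance of the basic scheme carried out \emph{relative} to the section already built on the closed lower-dimensional stratum; this is exactly where interpolation along subvarieties is used to glue the stratawise solutions into a global holomorphic section. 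My expectation is that the bump/handle induction together with the Cartan gluing lemma are the decisive steps, with all refinements (parametric, interpolation, singular base, stratification) obtained by equipping that single machine with parameters and relative conditions.
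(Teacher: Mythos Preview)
Your overall architecture is correct and matches the paper: the strongly plurisubharmonic exhaustion, the noncritical/critical dichotomy, convex bumps, $\CAP$ as the approximation input, and a nonlinear Cousin-I gluing on Cartan pairs are exactly the ingredients. There is, however, a genuine gap in your gluing step that is not a matter of detail but the central technical idea.

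You write that two maps $f_j$ and $g$ close on $A\cap B$ ``differ there by a transition close to the identity in the target'', to be split as $\gamma=\beta\circ\alpha^{-1}$. But $Y$ carries no group structure and no canonical local coordinates, so there is no intrinsic way to express the discrepancy between two nearby $Y$-valued maps as a self-map that can be split. The paper's device (Lemma \ref{local-spray}, Propositions \ref{splitting} and \ref{gluing-sprays}) is to first \emph{thicken} the section $f_j$ into a dominating holomorphic spray $F\colon A\times W\to Y$ with $W\subset\C^N$ and $F(\cdot,0)=f_j$, then apply $\CAP$ to the thickened map on the convex set $(A\cap B)\times Q\subset\C^{n+N}$ to get a spray $G$ over $B$, and finally realize the transition between $F$ and $G$ as a map $\gamma(x,w)=(x,\psi(x,w))$ acting in the \emph{linear} parameter space $\C^N$. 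It is this $\gamma$, close to the identity on $\C^N$, that is split by the implicit function theorem and a bounded $\dibar$-operator; one then sets $F\circ\alpha=G\circ\beta$ to obtain the glued spray over $A\cup B$. Without the spray mechanism there is no space in which your splitting lemma can even be formulated. Note also that this is why $\CAP$ must hold in every dimension $n+N$, not just $n=\dim X$: it is invoked on the spray, not on the original section.

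Your treatment of the critical case is also thinner than what is actually needed: the paper uses a local Mergelyan theorem for manifold-valued maps on the union of a strongly pseudoconvex domain and an attached totally real disc, followed by an auxiliary strongly plurisubharmonic function $\tau$ with no critical values (see Fig.~\ref{Fig:criticallevel}) that reduces crossing the critical level to the noncritical case. This can be filled in along the lines you indicate; the spray device is the point you should not expect to improvise.
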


Recall that a holomorphic map $\pi\colon Z\to X$ of complex spaces is said to be a {\em holomorphic submersion} if locally near each point $z\in Z$ it is fiberwise isomorphic to a projection map $U\times V\to U$, where $U\subset X$ is an open neighborhood of the projected point $x=\pi(z)\in X$ and $V$ is smooth (nonsingular). It follows that the fibers $Z_x=\pi^{-1}(x)$ $(x\in X)$ of a holomorphic submersion $\pi\colon Z\to X$ are complex manifolds. 

A surjective holomorphic submersion $\pi\colon Z\to X$ is said to be a {\em stratified holomorphic fiber bundle} if $X$ admits a stratification $X=X_0\supset X_1\supset \cdots \supset X_m =\varnothing$ by closed complex subvarieties such that every difference $S_j=X_j\setminus X_{j+1}$ is smooth (nonsingular), and the restriction $\pi\colon Z|_S\to S$ to any connected component $S$ (stratum) of any $S_j$ is a holomorphic fiber bundle. The fibers over different strata may be different.

Since $\CAP$ is clearly a special case of the basic Oka property with approximation, applied with the pair $X=\C^n$ and $K$ a compact convex set in $\C^n$, $\CAP$ is also necessary in Theorem \ref{th:CAP}. Furthermore, it turns out that all individual Oka properties introduced in the previous section (such as BOPA, BOPI, BOPAI, and their parametric analogues) are pairwise equivalent; see \cite{FF:EOP} and especially \cite[\S 5.15]{FF:book}.

\begin{definition}[\bf Oka manifolds]
\label{def:Oka}
A complex manifold $Y$ is said to be an {\em Oka manifold} if it satisfies any (and hence all) of the pairwise equivalent Oka properties in Theorem \ref{th:CAP}. In particular, a complex manifold is Oka if and only if it enjoys $\CAP$.
\end{definition}

The following is a corollary to Theorem \ref{th:CAP} (cf.\ \cite[Corollary 5.4.8]{FF:book}).

\begin{corollary}[\bf The weak homotopy equivalence principle]
\label{cor:weak}
If $X$ is a reduced Stein space and $\pi \colon Z\to X$ is a stratified holomorphic fiber bundle whose fibers are Oka manifolds, then the inclusion 
\begin{equation}
\label{inclusion}
    \iota\colon \Gamma_{\cO}(X,Z) \hra \Gamma_{\cC}(X,Z)
\end{equation}
of the space of holomorphic sections of $\pi$ into the space of continuous sections is a {\em weak homotopy equivalence}, i.e., the induced map $\pi_k(\iota)\colon \pi_k(\Gamma_{\cO}(X,Z)) \to  \pi_k(\Gamma_{\cC}(X,Z))$ of homotopy groups is an isomorphism for every $k=0,1,2,\ldots$. 
\end{corollary}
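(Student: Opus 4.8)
The plan is to deduce the corollary formally from the parametric Oka property (POP) furnished by Theorem~\ref{th:CAP}, which applies precisely because $\pi\colon Z\to X$ is a stratified holomorphic fiber bundle whose fibers are Oka; POP for sections then holds for every pair $P_0\subset P$ of Euclidean compacta. It suffices to show that $\pi_k(\iota)$ is both surjective and injective for each $k\ge 0$, and since the pairs $(D^k,\partial D^k)$ and $(D^{k+1},\partial D^{k+1})$ are Euclidean compacta, POP applies directly. Throughout I use the standard correspondence, valid because $X$ and the parameter spaces are locally compact Hausdorff, between a continuous map $P\to\Gamma_{\cC}(X,Z)$ and a $P$-parametrized family of continuous sections (a section over $P\times X$ that is holomorphic in the $X$-variable wherever the map lands in $\Gamma_{\cO}$); this lets me feed homotopies of sections into POP.

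For surjectivity of $\pi_k(\iota)$, fix a holomorphic section $s_0$ as basepoint and represent a class in $\pi_k(\Gamma_{\cC}(X,Z),s_0)$ by a map $f\colon (D^k,\partial D^k)\to(\Gamma_{\cC}(X,Z),s_0)$, so that $f$ carries $\partial D^k$ to the holomorphic section $s_0$. I would apply POP with $(P,P_0)=(D^k,\partial D^k)$: since $f(P_0)=\{s_0\}\subset\Gamma_{\cO}(X,Z)$, POP deforms $f$, rel $P_0$, to a map $f_1\colon D^k\to\Gamma_{\cO}(X,Z)$. Because the deformation is fixed on $\partial D^k$, the map $f_1$ still sends $\partial D^k$ to $s_0$ and hence represents a class in $\pi_k(\Gamma_{\cO}(X,Z),s_0)$ with $\pi_k(\iota)[f_1]=[f]$.

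For injectivity, suppose $g\colon(D^k,\partial D^k)\to(\Gamma_{\cO}(X,Z),s_0)$ represents a class with $\pi_k(\iota)[g]=0$. A based nullhomotopy of $\iota\circ g$ is a map $G\colon D^k\times[0,1]\to\Gamma_{\cC}(X,Z)$ with $G(\cdot,0)=g$, with $G(\cdot,1)\equiv s_0$, and with $G\equiv s_0$ on $\partial D^k\times[0,1]$. Identifying $D^k\times[0,1]$ with $D^{k+1}$, the set $P_0=(D^k\times\{0\})\cup(\partial D^k\times[0,1])\cup(D^k\times\{1\})$ is the boundary sphere $\partial D^{k+1}$, and $G|_{P_0}$ lands in $\Gamma_{\cO}(X,Z)$, since it equals the holomorphic $g$ on the bottom face and the constant holomorphic section $s_0$ elsewhere. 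I would apply POP with $(P,P_0)=(D^{k+1},\partial D^{k+1})$ to deform $G$, rel $P_0$, to $G_1\colon D^{k+1}\to\Gamma_{\cO}(X,Z)$. Then $G_1$ is a nullhomotopy of $g$ inside $\Gamma_{\cO}(X,Z)$, fixed on $\partial D^k$ and ending at $s_0$, so $[g]=0$ in $\pi_k(\Gamma_{\cO}(X,Z),s_0)$.

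Since the real content is entirely contained in Theorem~\ref{th:CAP}, the only genuine points to check are bookkeeping ones: that the pairs used are Euclidean compacta (immediate), that POP is applicable to sections of a \emph{stratified fiber bundle} and not merely to maps into a fixed Oka target (this is exactly the scope of Theorem~\ref{th:CAP}), and that the ``rel~$P_0$'' clause of POP is strong enough to preserve basepoints and boundary data so that the resulting homotopies are \emph{based}. The main, very mild, obstacle is therefore purely formal: correctly translating the based-homotopy-group statements into the relative, parametrized language of POP, and invoking the compact-open exponential correspondence so that families of sections over $D^k$ or $D^{k+1}$ are legitimate inputs to the parametric Oka property. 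No further analysis is needed.
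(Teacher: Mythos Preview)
Your proof is correct and follows essentially the same approach as the paper's: both deduce the weak homotopy equivalence directly from the parametric Oka property (Theorem~\ref{th:CAP}) applied to disk/sphere pairs of Euclidean compacta. The only cosmetic difference is that the paper uses the free pairs $(P,P_0)=(S^k,\varnothing)$ for surjectivity and $(D^{k+1},S^k)$ for injectivity, whereas you track basepoints explicitly via $(D^k,\partial D^k)$ and $(D^{k+1},\partial D^{k+1})$; your version is slightly more careful about based homotopy classes, but the substance is identical.
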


\begin{proof}
Denote by $D^{k+1}$ the closed unit ball in $\R^{k+1}$ and by $S^k=bD^{k+1}$ the $k$-dimensional sphere. Applying the parametric Oka property (Theorem \ref{th:CAP}) with the parameter spaces $P_0=\varnothing \subset S^k=P$ we see that every continuous map $S^k\to \Gamma_{\cC}(X,Z)$ can be deformed to a continuous map $S^k\to \Gamma_{\cO}(X,Z)$; this says that the homomorphism $\pi_k(\iota)$ is surjective. On the other hand, applying Theorem \ref{th:CAP} with the parameter spaces $P_0=S^k \subset D^{k+1}=P$ we conclude that every map $f\colon D^{k+1} \to \Gamma_{\cC}(X,Z)$ satisfying $f(S^k)\subset \Gamma_{\cO}(X,Z)$ can be deformed to a map $D^{k+1} \to \Gamma_{\cO}(X,Z)$ by a homotopy that is fixed on $S^k$; this means that $\pi_k(\iota)$ is injective. 
\end{proof}

Theorem \ref{th:CAP} implies a homotopy principle for liftings of holomorphic maps. Given a map $\pi\colon E\to B$, we say that a map $F \colon X\to E$ is a {\em lifting} of a map $f \colon X\to  B$ if $\pi\circ F=f$. Similarly one defines homotopies of liftings.

%
%
%
%
\begin{corollary}[\bf The Oka principle for liftings]
\label{OP:lifting}
{\rm \cite[Corollary 5.4.11]{FF:book}}
Assume that $\pi\colon E\to B$ is a stratified holomorphic fiber bundle all of whose fibers are Oka manifolds. If $X$ is a Stein space and $f\colon X\to B$ is a holomorphic map, then any continuous lifting $F_0\colon X\to Z$ of $f$ admits a homotopy of liftings $F_t\colon X\to E$ $(t\in [0,1])$ such that $F_1$ is holomorphic. The corresponding result holds with approximation and interpolation.
\end{corollary}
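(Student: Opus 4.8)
The plan is to reduce the lifting problem to a section problem over the Stein base $X$ and then invoke Theorem \ref{th:CAP}. The device is the \emph{pullback bundle}. Given the holomorphic map $f\colon X\to B$, I would form the fiber product
\[
	Z := f^*E = X\times_B E = \{(x,e)\in X\times E : f(x)=\pi(e)\},
\]
together with the projection $p\colon Z\to X$, $p(x,e)=x$. Since $\pi\colon E\to B$ is a holomorphic submersion, so is its pullback $p\colon Z\to X$ (and $Z$ is reduced because $X$ is), and the fiber of $p$ over $x\in X$ is canonically identified with $E_{f(x)}=\pi^{-1}(f(x))$.

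The key formal observation is that liftings of $f$ correspond bijectively to sections of $p$. Indeed, a continuous (resp.\ holomorphic) map $F\colon X\to E$ with $\pi\circ F=f$ is the same datum as the continuous (resp.\ holomorphic) section $s_F\colon X\to Z$, $s_F(x)=(x,F(x))$, and a homotopy of liftings $F_t$ corresponds to a homotopy of sections $s_{F_t}$. Thus the given continuous lifting $F_0$ yields a continuous section $s_0=s_{F_0}$, and it suffices to deform $s_0$, through sections, to a holomorphic section $s_1$; then $F_1$ is read off from $s_1$.

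To apply Theorem \ref{th:CAP} I must verify that $p\colon Z\to X$ is a \emph{stratified holomorphic fiber bundle over the Stein base $X$ whose fibers over the various strata are Oka}, and this is where the real work lies. Pulling back the stratification $B=B_0\supset B_1\supset\cdots\supset B_\ell=\varnothing$ via $f$ gives a filtration of $X$ by the closed complex subvarieties $f^{-1}(B_j)$. Over each difference $f^{-1}(B_j\setminus B_{j+1})$ the bundle $p$ is the pullback of the fiber bundle $E|_{B_j\setminus B_{j+1}}$, hence again a holomorphic fiber bundle whose fibers coincide with those of $\pi$ over the corresponding stratum of $B$, and so are Oka by hypothesis. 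The one point requiring care is that the sets $f^{-1}(B_j\setminus B_{j+1})$ need not be smooth; I would refine the filtration $\{f^{-1}(B_j)\}$ to a stratification of $X$ by smooth locally closed subvarieties (using that a reduced complex space admits such a stratification subordinate to any given finite family of subvarieties). Over each smooth stratum $S$, contained in a single $f^{-1}(B_j\setminus B_{j+1})$, the restriction of $p$ is still a pullback of a fiber bundle, hence a holomorphic fiber bundle with Oka fiber. This exhibits $p\colon Z\to X$ as a stratified holomorphic fiber bundle with Oka fibers over the Stein base $X$.

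With this in place the conclusion is immediate: applying the basic form of Theorem \ref{th:CAP} for sections (the parametric Oka property over the one-point parameter space) produces a homotopy, through continuous sections, from $s_0$ to a holomorphic section $s_1$, and translating back through the correspondence above yields the desired homotopy of liftings $F_t$ with $F_1$ holomorphic. The additions with approximation (on a compact $\cO(X)$-convex set) and with interpolation (along a closed complex subvariety) follow verbatim from the corresponding refinements already built into Theorem \ref{th:CAP}, since approximating or interpolating the section $s_F$ is the same as approximating or interpolating the lifting $F$. I expect the stratification refinement — arranging smoothness of the strata while preserving local triviality of the pulled-back bundle — to be the only nontrivial step; everything else is the formal pullback dictionary between liftings and sections.
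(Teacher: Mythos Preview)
Your proof is correct and follows essentially the same approach as the paper: form the pullback $f^*E\to X$, use the bijection between liftings and sections, observe that $f^*E\to X$ is a stratified holomorphic fiber bundle with Oka fibers (stratifying $X$ so that strata map into strata of $B$), and apply Theorem~\ref{th:CAP}. Your explicit remark that the preimages $f^{-1}(B_j\setminus B_{j+1})$ may need to be refined to obtain smooth strata is a detail the paper passes over in one line, but otherwise the arguments coincide.
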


\begin{proof}
Assume that $\pi \colon E\to B$ is a holomorphic fiber bundle with Oka fiber $Y$. Let $\pi' \colon f^*E\to X$ denote the pull-back bundle whose fiber over a point $x\in X$ is $E_{f(x)} \cong Y$.
\[
    \xymatrix{ f^* E \ar[d]^{\pi'} \ar[r] & E \ar[d]^{\pi} & \ar[l] Y \\
               X \ar[r]^{\ \ f} \ar[ur]^{F_t} & B }
\]
Sections $X\to f^*E$ are in bijective correspondence with liftings $F\colon X\to E$ of $f$. Since the fiber $Y$ of $\pi' \colon f^*E\to X$ is Oka, the conclusion follows from Theorem \ref{th:CAP}. In the general case we stratify $X$ so that the strata are mapped by $f$ to the strata of $B$; then $f^* E$  is also a stratified fiber bundle
over $X$ and we conclude the proof as before. 
\end{proof}


\subsection{Examples and functorial properties of Oka manifolds}
\label{ss:examples}
The examples in this subsection are mainly taken from \cite[\S 5.5]{FF:book} and \cite{FLflex, Larusson-toric}. Those sources contain a wealth of additional information and references to the original papers. In particular, the results in \cite{FLflex} answer some of the questions left open in \cite{FF:book}.

	
\subsubsection{Complex homogeneous manifolds}
\label{OP;examples;homogeneous}

\begin{theorem}
\label{CAP:Lie}
{\rm (Grauert \cite{Grauert1})}
Every complex Lie group and, more generally, every complex homogeneous manifold is an Oka manifold.  
\end{theorem}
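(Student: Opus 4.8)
The plan is to reduce everything to the single property $\CAP$ and then invoke the main theorem. Since Theorem~\ref{th:CAP} asserts that $\CAP$ already implies every form of the Oka principle, it suffices to show that an arbitrary complex homogeneous manifold $Y$ enjoys $\CAP$; the special case of a complex Lie group $G$ is then recovered by letting $G$ act on itself by left translations. The geometric input I would extract from homogeneity is a \emph{dominating holomorphic spray}. Writing $Y=G/H$ with $G$ a complex Lie group acting transitively and holomorphically, and letting $\mathfrak g$ be the Lie algebra of $G$ with holomorphic exponential map $\exp\colon\mathfrak g\to G$, I would set
\[
    s\colon Y\times\mathfrak g\to Y,\qquad s(y,\xi)=\exp(\xi)\cdot y .
\]
This map is holomorphic and globally defined (the flows arise from a group action, so completeness is automatic), it satisfies $s(y,0)=y$, and its partial differential in $\xi$ at $\xi=0$ is precisely the infinitesimal action $\mathfrak g\to T_yY$, which is surjective at every point $y$ exactly because $G$ acts transitively. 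Thus $s$ is a dominating spray with trivial vector-bundle fibre $\mathfrak g\cong\C^{N}$, $N=\dim_{\C}G$, and $Y$ is elliptic.

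With the spray in hand I would verify $\CAP$ directly. By the reduction noted after Definition~\ref{def:CAP}, it is enough to approximate a given holomorphic map $f\colon K\to Y$, uniformly on $K$, by holomorphic maps $Q\to Y$ for an arbitrary pair of compact convex sets $K\subset Q\subset\C^{n}$; exhausting $\C^{n}$ by such sets then produces entire approximants. Because $s$ is dominating, for $x$ in a neighborhood of $K$ the map $\xi\mapsto s(f(x),\xi)$ is a submersion near $\xi=0$, so any holomorphic map sufficiently close to $f$ over a neighborhood of $K$ can be represented as $x\mapsto s(f(x),c(x))$ for a small holomorphic $\mathfrak g$-valued coefficient $c$. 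The essential step is to pass from $K$ to the larger set $Q$: I would split $Q$ into $K$ together with a small convex bump $L$ forming a Cartan pair $(K,L)$, construct the approximant separately over $K$ and over $L$, and glue the two pieces across the overlap $K\cap L$.

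I expect the gluing to be the main obstacle, and it is here that the group structure does the decisive work. Over the overlap the two local pieces differ by a transition map $\gamma$ taking values near the identity of $G$; pulling it back through $\exp$ converts the nonlinear, $G$-valued patching problem into an \emph{additive} Cousin splitting $\gamma = b - a$, with $a$ holomorphic on a neighborhood of $K$ and $b$ holomorphic on a neighborhood of $L$, both $\mathfrak g$-valued. On convex (hence Runge and holomorphically convex) subsets of $\C^{n}$ such a splitting is solvable with good bounds by the Oka--Weil theorem, and feeding the solution back through $s$ corrects the two local maps into a single holomorphic map on a neighborhood of $Q$ that is uniformly close to $f$ on $K$. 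Iterating over a finite chain of bumps exhausts $Q$, and then $\C^{n}$, establishing $\CAP$; Theorem~\ref{th:CAP} then upgrades this to all Oka properties, so $Y$ is an Oka manifold. The only genuinely delicate points are the quantitative control needed to keep the coefficients small enough that the nonlinear correction converges, and the linearization itself, which I would carry out by a standard Newton-type iteration turning the multiplicative splitting into the additive one solved above.
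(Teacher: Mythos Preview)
Your construction of the dominating spray $s(y,\xi)=\exp(\xi)\cdot y$ is correct and is precisely Example~\ref{example:elliptic}(A); it shows that $Y$ is elliptic, and from there the shortest path is simply to invoke Corollary~\ref{subell-Oka} (elliptic $\Rightarrow$ Oka), whose proof needs only the homotopy $f_t(z)=f(tz)$ to a constant and the h-Runge theorem (Theorem~\ref{h-Runge1}). The Cartan-pair/bump-and-glue apparatus you go on to outline is the machinery behind the proof of Theorem~\ref{th:CAP} itself and is overkill in the convex setting; you also leave unexplained where the approximant over the bump $L$ is supposed to come from before any gluing can take place.

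The paper takes a more elementary and self-contained route that avoids sprays and gluing altogether. For a Lie group $G$ it uses the same homotopy $f_t(z)=f(tz)$ but exploits the group law to telescope
\[
  f = f_1\,(f_{(N-1)/N})^{-1}\cdot f_{(N-1)/N}\,(f_{(N-2)/N})^{-1}\cdots f_{1/N}\,(f_0)^{-1}\cdot f_0.
\]
For $N$ large each factor $f_{k/N}(f_{(k-1)/N})^{-1}$ is close to $1\in G$, hence equals $\exp(h_k)$ for a holomorphic $h_k\colon K\to\mathfrak g$; approximate each $h_k$ by an entire $\mathfrak g$-valued map via ordinary Oka--Weil, exponentiate, and multiply. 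No splitting lemma, no iteration.

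Your sketch also has a genuine gap in the homogeneous case $Y=G/H$: you assert that the two local pieces over the overlap differ by a transition $\gamma$ with values near $1\in G$, but two $Y$-valued maps have no canonical $G$-valued ratio, so pulling back through $\exp$ to obtain an additive Cousin problem is not available as stated. The paper's argument (and its homogeneous variant, cf.\ \cite[p.~198]{FF:book}) sidesteps this by decomposing through the $G$-action on a single map rather than through differences of two $Y$-valued maps.
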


\begin{proof}
Let $G$ be a complex Lie group with the identity element $1\in G$. In view of Theorem \ref{th:CAP} it suffices to show that $G$ enjoys CAP. Denote by $\ggoth =T_1 G$ the Lie algebra of $G$ and by $\exp \colon \ggoth\to G$ the exponential map. Assume that $K$ is a compact convex set in $\C^n$ and $f\colon K\to G$ is a holomorphic map in an open neighborhood of $K$. 

If $f(K) \subset G$ lies sufficiently close to $1\in G$, then $f=\exp(h)$ for a unique holomorphic map $h \colon K \to {\ggoth}$. Approximating $h$ uniformly on $K$ by an entire map $\wt h\colon\C^n\to {\ggoth}$ and taking $\wt f=\exp \wt h \colon \C^n \to G$ yields an entire map approximating $f$. 

In general we split $f$ into a finite product of maps close to the identity to which the previous argument applies. We may assume that the origin $0 \in \C^n$ is contained in the interior of $K$. Set $f_t(z)=f(tz)$ for $t\in [0,1]$; then $f_1=f$ and $f_0$ is the constant map $\C^n \ni z\to f(0)\in G$. Choose a large integer $N\in\N$ and write
\[
    f= f_1= f_1 \bigl(f_{\frac{N-1}{N}} \bigr)^{-1}
    \cdotp f_{\frac{N-1}{N}} \bigl(f_{\frac{N-2}{N}} \bigr)^{-1}
    \cdots f_{\frac{1}{N}} (f_0)^{-1}\cdotp f_0.
\]
If $N$ is large enough, then each of the quotients $f_{\frac{k}{N}} \bigl(f_{\frac{k-1}{N}} \bigr)^{-1}$ is sufficiently close to $1$ so that it admits a holomorphic logarithm $h_k\colon K \to \ggoth$. Approximating $h_k$ by an entire map $\wt h_k\colon\C^n\to \ggoth$ and taking $\wt g_k=\exp \wt h_k$ and
$\wt f = \wt g_N  \wt g_{N-1} \cdots \wt g_{1}  f_0 \colon \C^n\to G$ gives an entire map approximating $f$.

The proof for a complex homogeneous manifold is similar; see \cite[p.\ 198]{FF:book}.
\end{proof}

\begin{example}
Every complex projective space $\C\P^n$ and, more generally, every complex Grassmannian, is an Oka manifold.
\end{example}

\subsubsection{Holomorphic fiber bundles with Oka fibers}
\label{OP;examples;ascent-descent}
The class of Oka manifolds is stable under unramified holomorphic coverings and quotients. 

%
%
%
%
\begin{proposition}
\label{OP-covering}
{\rm \cite[Proposition 5.5.2, p.\ 199]{FF:book}}
If $\wh Y\to Y$ is a holomorphic covering map of complex manifolds, then $Y$ is an Oka manifold if and only if
$\wh Y$ is an Oka manifold. In particular, any unramified holomorphic quotient of $\C^n$ is Oka; this includes all tori $\C^n/\Gamma$, where $\Gamma$ is a lattice in $\C^n$.
\end{proposition}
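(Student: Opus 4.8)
The plan is to work entirely through the characterization in Theorem~\ref{th:CAP} that a complex manifold is Oka precisely when it enjoys $\CAP$, and to transfer $\CAP$ across the covering map $p\colon \wh Y\to Y$ using the fact that both a compact convex set $K\subset\C^n$ and the ambient space $\C^n$ are simply connected, so that maps out of them lift through $p$.

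For the direction \emph{$\wh Y$ Oka $\Rightarrow$ $Y$ Oka}, I would start with a holomorphic map $f\colon K\to Y$ on (a convex neighborhood of) a compact convex set $K\subset\C^n$. Since that neighborhood is simply connected and $p$ is a covering, $f$ lifts to a continuous, hence holomorphic (as $p$ is a local biholomorphism), map $\wh f\colon K\to\wh Y$ with $p\circ\wh f=f$. Applying $\CAP$ for $\wh Y$, I approximate $\wh f$ uniformly on $K$ by entire maps $\wh g\colon\C^n\to\wh Y$ and set $g=p\circ\wh g$. As $p$ is uniformly continuous on a fixed compact neighborhood of $\wh f(K)$ containing the images $\wh g(K)$, the maps $g$ approximate $f=p\circ\wh f$ uniformly on $K$, so $Y$ enjoys $\CAP$. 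This direction is routine.

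The substance is the converse, \emph{$Y$ Oka $\Rightarrow$ $\wh Y$ Oka}, whose main obstacle is lift selection. Given $\wh f\colon K\to\wh Y$, I put $f=p\circ\wh f$ and use $\CAP$ for $Y$ to approximate $f$ by entire maps $g\colon\C^n\to Y$. Because $\C^n$ is simply connected, each $g$ lifts to an entire map $\wh g\colon\C^n\to\wh Y$, determined by its value at a fixed base point $z_0\in K$; the difficulty is to arrange that $\wh g$ is close to $\wh f$ on \emph{all} of $K$, not merely at $z_0$. To handle this I would fix a distance function on $Y$, pull it back through the local isometry $p$, and use compactness of $\wh f(K)$ to obtain a uniform even-covering radius $\delta>0$ on which $p$ restricts to isometries of $\delta$-balls. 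Requiring $g$ to be $\delta/2$-close to $f$ on $K$, I choose $\wh g(z_0)$ to be the point of $p^{-1}(g(z_0))$ nearest to $\wh f(z_0)$. The argument is then finished by connectedness of $K$: the set where the distance between $\wh g$ and $\wh f$ is less than $\delta$ is open, and on it the even-covering property forces that distance to be less than $\delta/2$, whence the set is also closed; since $K$ is connected and contains $z_0$, it is all of $K$. Thus $\wh g$ approximates $\wh f$ within $\delta/2$, and letting the tolerance shrink proves $\CAP$ for $\wh Y$.

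The displayed special case is then immediate: $\C^n$ trivially enjoys $\CAP$, since a holomorphic map of a convex set into $\C^n$ is approximable coordinatewise by polynomial maps, so $\C^n$ is Oka; and for any fixed-point-free, properly discontinuous group action giving an unramified quotient $\C^n\to\C^n/\Gamma$ --- in particular for a lattice $\Gamma$, producing a complex torus --- the equivalence just established shows that $\C^n/\Gamma$ is Oka.
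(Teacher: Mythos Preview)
Your proof is correct and follows the same approach as the paper: both directions are reduced to $\CAP$ via Theorem~\ref{th:CAP}, and both use that $K$ (more precisely, a convex neighborhood of it) and $\C^n$ are simply connected, so holomorphic maps into $Y$ lift through the covering $p$. The paper's proof is a one-line sketch that simply asserts this lifting suffices; you have supplied the nontrivial detail it suppresses in the direction $Y$ Oka $\Rightarrow \wh Y$ Oka, namely the lift-selection argument ensuring that the lift $\wh g$ of an entire approximation $g$ stays uniformly close to $\wh f$ on all of $K$, via the open--closed argument using a uniform even-covering radius.
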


The proof is immediate since any continuous map $f\colon K\to Y$ from a convex set $K\subset \C^n$ lifts to a map $K\to \wh Y$, and a holomorphic map lifts to a holomorphic map. This shows that $Y$ and $\wh Y$  enjoy CAP at the same time. (See \cite[p.\ 199]{FF:book}.)


A complex manifold is called \textit{Liouville} if it carries no nonconstant negative plurisubharmonic functions, and \textit{strongly Liouville} if its universal covering space is Liouville. From Proposition \ref{OP-covering} and the fact that $\C^n$ is Liouville we easily infer the following.

\begin{corollary}
\label{cor:Liouville}
Every Oka manifold is strongly Liouville.  
\end{corollary}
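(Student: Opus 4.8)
\emph{The plan} is to reduce the statement to the Liouville property of $\C^n$ by passing to the universal covering and invoking Proposition \ref{OP-covering}. Let $Y$ be an Oka manifold; we may assume $Y$ is connected, and let $p\colon\wh Y\to Y$ be its universal covering, endowed with the complex structure for which $p$ is a holomorphic (locally biholomorphic) covering map. By Proposition \ref{OP-covering}, $\wh Y$ is again an Oka manifold, and it is connected. By the very definition of \emph{strongly Liouville}, it therefore suffices to prove that every connected Oka manifold $M$ is Liouville, and then to apply this to $M=\wh Y$.

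So fix a connected Oka manifold $M$ of dimension $n$ and a negative plurisubharmonic function $\phi\colon M\to[-\infty,0)$; I want to show that $\phi$ is constant. The elementary observation is that $\phi$ pulls back well: for every holomorphic map $h\colon\C^k\to M$ the composition $\phi\circ h$ is plurisubharmonic on $\C^k$ and bounded above by $0$, hence constant, since $\C^k$ is Liouville (a subharmonic function on $\C^k$ bounded above is constant). The task is thus to produce enough entire maps into $M$ to propagate this constancy over all of $M$.

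This is where the Oka hypothesis enters, in the form that an Oka manifold is \emph{strongly dominable}: for each $y\in M$ there is a holomorphic map $f\colon\C^n\to M$ with $f(0)=y$ and $\rank_0 f=n$. Such an $f$ is furnished by Theorem \ref{th:CAP}: choose a holomorphic chart centred at $y$, restrict its inverse to a small closed ball $K\subset\C^n$ (a compact convex set) to obtain a holomorphic map $K\to M$ of full rank at $0$ taking the value $y$ there, and approximate it by an entire map, keeping the value at $0$ fixed via interpolation at the point $\{0\}$ and controlling the first derivative via the approximation together with the Cauchy estimates; the latter keeps $\rank_0 f=n$. By the open mapping property of a full-rank holomorphic map, the image $f(\C^n)$ contains an open neighborhood $V_y\ni y$, and by the previous paragraph $\phi$ is constant on $V_y$. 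As $y$ ranges over $M$, this shows $\phi$ is locally constant; since $M$ is connected, $\phi$ is constant. Hence $M$ is Liouville, and applying this to $M=\wh Y$ shows that the universal covering of $Y$ is Liouville, which is exactly the assertion that $Y$ is strongly Liouville.

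The one genuinely nontrivial step, and the only place the Oka property is used, is the construction of full-rank entire maps through every point, i.e.\ strong dominability; the delicate part there is retaining rank $n$ after the $\CAP$-approximation, which the Cauchy estimates guarantee. (Alternatively one could invoke that $M$ is $\C$-connected: the interpolation form of the Oka property supplies, for any $y_0,y_1\in M$, an entire map $f\colon\C\to M$ with $f(0)=y_0$, $f(1)=y_1$, whence $\phi(y_0)=\phi(y_1)$ by the same pullback argument. Either route works; everything beyond it is the classical Liouville theorem on $\C^k$ together with connectedness.)
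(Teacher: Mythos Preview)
Your proof is correct and matches the paper's (one-line) approach: pass to the universal cover via Proposition~\ref{OP-covering}, then use (strong) dominability of Oka manifolds to pull back a negative plurisubharmonic function to $\C^n$, where it must be constant by the Liouville property of $\C^n$. One minor slip worth fixing: the parenthetical ``a subharmonic function on $\C^k$ bounded above is constant'' is false for $k\ge 2$ (e.g.\ $-|z|^{2-2k}$); the correct and sufficient statement is the one for \emph{plurisubharmonic} functions, which is what you actually use in the main clause.
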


\begin{corollary} 
\label{p6.1}    
{\rm \cite[Corollary 5.5.3, p.\ 199]{FF:book}}                                     
A Riemann surface $Y$ is an Oka manifold if and only if it is not Kobayashi hyperbolic, and this holds if and only if $Y$ is one of the Riemann surfaces $\C\P^1$, $\C$, $\C^*$, or a torus $\C/\Gamma$.
\end{corollary}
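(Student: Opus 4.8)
The plan is to funnel the whole statement through the uniformization theorem together with the two functorial facts already in hand: Corollary~\ref{cor:Liouville} (Oka implies strongly Liouville) and Proposition~\ref{OP-covering} (the Oka property ascends and descends along unramified coverings). By uniformization, the universal covering $\wh Y$ of a connected Riemann surface $Y$ is biholomorphic to one of $\C\P^1$, $\C$, or $\D$, and $Y=\wh Y/\Gamma$ for a group $\Gamma$ of deck transformations acting freely and properly discontinuously. First I would enumerate the possible quotients. Every automorphism of $\C\P^1$ is a M\"obius transformation and has a fixed point, so $\wh Y=\C\P^1$ forces $\Gamma=\{1\}$ and $Y=\C\P^1$. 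The automorphisms of $\C$ are the affine maps $z\mapsto az+b$; a fixed-point-free discrete group of these must consist of translations, so $\Gamma$ is a lattice of rank $0$, $1$, or $2$, and correspondingly $Y$ is $\C$, $\C^*$ (with $z\mapsto e^z$ exhibiting the rank-one covering $\C\to\C^*$), or a torus $\C/\Gamma$. Thus the surfaces on the list are precisely those whose universal cover is $\C$ or $\C\P^1$, while every other Riemann surface is covered by $\D$.

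Next I would dispatch the Oka dichotomy case by case. If $\wh Y=\C\P^1$, then $Y=\C\P^1$ is complex homogeneous and hence Oka by Theorem~\ref{CAP:Lie}. If $\wh Y=\C$, then $\C\to Y$ is an unramified covering and $\C$ is Oka (as a one-dimensional complex Lie group, Theorem~\ref{CAP:Lie}), so $Y$ is Oka by Proposition~\ref{OP-covering}; this handles $\C$, $\C^*$, and the tori. Finally, if $\wh Y=\D$, I would show $Y$ is not Oka: the disc carries the nonconstant negative \psh\ function $z\mapsto|z|^2-1$, so $\D$ is not Liouville, whence $Y$ is not strongly Liouville, and Corollary~\ref{cor:Liouville} rules out the Oka property. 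This proves that $Y$ is Oka exactly when its universal cover is $\C$ or $\C\P^1$, that is, exactly when $Y$ appears on the list.

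It remains to match this dichotomy with Kobayashi hyperbolicity. For the surfaces on the list there is always a nonconstant entire curve $\C\to Y$ --- the covering map itself when $\wh Y=\C$, and a coordinate inclusion $\C\hookrightarrow\C\P^1$ when $\wh Y=\C\P^1$ --- so none of them is Brody hyperbolic, and since Kobayashi hyperbolicity implies Brody hyperbolicity (as recalled in \S\ref{ss:flex}), none is Kobayashi hyperbolic. Conversely, $\D$ is Kobayashi hyperbolic by the Schwarz-Pick lemma, and the infinitesimal Kobayashi metric is invariant under holomorphic coverings (holomorphic discs lift through $\wh Y\to Y$), so every $\D$-covered surface is Kobayashi hyperbolic as well. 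Combining the three paragraphs closes the circle of equivalences. I expect the main obstacle to be purely one of clean citation and bookkeeping rather than of analysis: the genuinely analytic content has been absorbed into Corollary~\ref{cor:Liouville} and Proposition~\ref{OP-covering}, so the care needed is in invoking the two external inputs correctly --- the uniformization classification and the covering-invariance of Kobayashi hyperbolicity --- and in checking that the three uniformization classes line up exactly with the Liouville/non-Liouville split and with the hyperbolic/non-hyperbolic split.
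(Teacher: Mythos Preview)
Your proof is correct and follows essentially the same route as the paper: uniformization, then a case split on the universal cover, using homogeneity for $\C\P^1$ and Proposition~\ref{OP-covering} for the $\C$-covered surfaces. The only minor difference is in the $\D$-covered case: the paper simply notes that the disc and its quotients are Kobayashi hyperbolic (implicitly using that a hyperbolic manifold admits no nonconstant entire curves, hence fails $\CAP$), whereas you route the ``not Oka'' conclusion through Corollary~\ref{cor:Liouville} via the non-Liouville function $|z|^2-1$ on $\D$. Both arguments are valid and equally short; yours has the small advantage of citing a result already stated just above, while the paper's has the advantage of establishing the hyperbolicity equivalence and the non-Oka conclusion in a single stroke.
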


\begin{proof} 
By the Riemann-Koebe uniformization theorem, the universal covering space of any Riemann surface is one of the Riemann surfaces $\C\P^1$, $\C$, or the disc $\D$. The Riemann sphere $\C\P^1$ is homogeneous and hence Oka; it has no nontrivial oriented unramified quotients. The complex plane $\C$ covers $\C^*$ and the complex tori $\C/\Gamma$, so these are Oka. The disc and its quotients are hyperbolic.
\end{proof}

A covering projection is a fiber bundle with a discrete fiber. Proposition \ref{OP-covering} generalizes to holomorphic fiber bundles with positive dimensional Oka fibers.

%
%
%
%
\begin{theorem}
\label{OP-fiberbundles}
{\rm \cite[Theorem 5.5.4, p.\ 205]{FF:book}}
If $E$ and $X$ are complex manifolds and $\pi \colon E\to X$ is a holomorphic fiber bundle whose fiber $Y$ is Oka, then $X$ is Oka if and only if $E$ is Oka. In particular, if two of the manifolds $X$, $Y$, and $X\times Y$ are Oka, then so is the third.  
\end{theorem}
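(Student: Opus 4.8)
The plan is to prove both implications by verifying the Convex Approximation Property $\CAP$, using Theorem \ref{th:CAP} to pass freely between $\CAP$ and the Oka property, together with the Oka principle for \emph{sections} of fiber bundles with Oka fibers over a Stein base (the second assertion of Theorem \ref{th:CAP}). Throughout, fix a compact convex set $K\subset\C^n$ and a map holomorphic in a neighborhood of $K$. For the \emph{descent} direction ($E$ Oka $\Rightarrow$ $X$ Oka), let $f\colon K\to X$ be holomorphic near $K$; I want to approximate it uniformly on $K$ by entire maps $\C^n\to X$. First I would lift $f$ to $E$: choosing a convex Stein neighborhood $U\supset K$, the pullback $f^*E\to U$ is a holomorphic fiber bundle over a Stein manifold with Oka fiber $Y$, and since $U$ is contractible it is topologically trivial and carries a continuous section; by Theorem \ref{th:CAP} this section deforms to a holomorphic one, yielding a holomorphic lift $\wt f\colon U\to E$ with $\pi\circ\wt f=f$. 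As $E$ is Oka, $\wt f|_K$ is approximable uniformly on $K$ by entire maps $F\colon\C^n\to E$, and then $\pi\circ F\colon\C^n\to X$ is entire and, by uniform continuity of $\pi$ on the relevant compacta, approximates $f=\pi\circ\wt f$ on $K$. Hence $X$ enjoys $\CAP$.

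For the \emph{ascent} direction ($X$ Oka $\Rightarrow$ $E$ Oka), let $g\colon K\to E$ be holomorphic near $K$ and set $f=\pi\circ g\colon K\to X$. Since $X$ is Oka, I approximate $f$ uniformly on $K$ by an entire map $\hat f\colon\C^n\to X$. Using the local triviality of $\pi\colon E\to X$ together with the closeness of $\hat f$ to $f$ on the compact set $K$, I would construct a holomorphic lift $\gamma$ of $\hat f$ over a neighborhood of $K$ that is uniformly close to $g$ on $K$; this $\gamma$ is a holomorphic section, near $K$, of the pullback bundle $\hat f^*E\to\C^n$. The latter is a holomorphic fiber bundle over the Stein manifold $\C^n$ with Oka fiber $Y$, and $K$ is $\cO(\C^n)$-convex, so by Theorem \ref{th:CAP} (the basic Oka property with approximation for sections) $\gamma$ is approximable uniformly on $K$ by a global holomorphic section $\sigma\colon\C^n\to\hat f^*E$. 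Reading $\sigma$ as an entire lift $G\colon\C^n\to E$ of $\hat f$, we obtain $G$ close to $\gamma$, and hence to $g$, uniformly on $K$. Thus $E$ enjoys $\CAP$.

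The final (\emph{in particular}) assertion then follows by specializing to the trivial bundle with its two projections. If $X$ and $Y$ are Oka, the ascent direction applied to $X\times Y\to X$ (fiber $Y$) gives that $X\times Y$ is Oka; if $X\times Y$ and $Y$ are Oka, the descent direction applied to $X\times Y\to X$ gives that $X$ is Oka; and symmetrically, using $X\times Y\to Y$ (fiber $X$), the descent direction gives that $Y$ is Oka. So in each case exactly one application of the main statement suffices.

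The hard part will be the ascent step, and specifically the construction of the lift $\gamma$ of the perturbed map $\hat f$ that stays uniformly close to the given section $g$ on $K$: one must transfer the section $g$ of $f^*E$ to a nearby section of $\hat f^*E$, for which I would cover the compact set $f(K)$ by finitely many trivializing charts of $\pi$ and patch, exploiting that $\hat f$ is as close to $f$ on $K$ as desired. A related technical point is producing a globally defined continuous section of $\hat f^*E$ that is holomorphic near $K$ (extending $\gamma$ using the topological triviality of bundles over the contractible base $\C^n$), so that the basic Oka property with approximation for sections may be invoked.
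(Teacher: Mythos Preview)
Your proposal is correct and follows essentially the same route as the paper's proof: in both directions you verify $\CAP$ by lifting through the fiber bundle using the Oka principle for sections/liftings with Oka fiber (Theorem~\ref{th:CAP} and Corollary~\ref{OP:lifting}), then invoke the Oka property of whichever of $E$ or $X$ is assumed Oka. The technical step you flag as the ``hard part'' --- transferring the lift $g$ of $f$ to a nearby holomorphic lift $\gamma$ of the approximation $\hat f$ over a neighborhood of $K$ --- is precisely the step the paper also isolates and defers to \cite[p.\ 199]{FF:book}; otherwise the arguments coincide up to whether one works with a pair $K\subset Q$ of convex sets or directly with $K\subset\C^n$.
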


\begin{proof}[Sketch of proof]
Assume first that $E$ is an Oka manifold. Let $K\subset Q$ be compact convex sets in $\C^n$, and let $f\colon K\to X$ be a holomorphic map. By the homotopy lifting theorem, there exists a continuous lifting $h\colon K\to E$ of $f$. Since the fiber $Y$ of $\pi\colon E\to X$ is Oka, we can replace $h$ by a holomorphic lifting by using Corollary \ref{OP:lifting}. Since $E$ is Oka, we can approximate $h$ uniformly on $K$ by a holomorphic map $\wt h\colon Q\to E$. The map $\wt f=\pi\circ \wt h\colon Q \to X$ then approximates $f$ on $K$. This shows that $X$ is Oka. 

Conversely, assume that $X$ is Oka. Choose compact convex sets $K\subset Q\subset \C^n$ and a holomorphic map $h\colon K\to E$. Let $f=\pi\circ h\colon K\to X$. Since $X$ is Oka, we can approximate $f$, uniformly on $K$, by a holomorphic map $f_1 \colon Q \to X$. If the approximation is sufficiently close then we can also approximate $h$, uniformly on $K$, by a holomorphic map $h_1\colon K\to E$ that is a lifting of $f_1|_K$, in the sense that $\pi\circ h_1(x)= f_1(x)$ for all $x\in K$. (See \cite[p.\ 199]{FF:book} for the details.) Since $\pi\colon E\to X$ is a fiber bundle and the sets $K \subset Q$ are convex, the map $h_1$ extends to a continuous map $h_1 \colon Q \to E$ satisfying $\pi\circ h_1(x)= f_1(x)$ for all $x\in Q$. Finally, as the fiber of $\pi$ is an Oka manifold, Corollary \ref{OP:lifting} shows that $h_1$ can be deformed to a holomorphic lifting $\wt h\colon Q \to E$ of the map $f_1\colon Q \to X$ by a homotopy of liftings which remains uniformly close to $h_1$ on the set $K$. In particular, $\wt h$ approximates $h$ uniformly on $K$. Hence $E$ is an Oka manifold. 
\end{proof}

\begin{corollary}                                     
\label{P1-bundles}
If $\pi\colon E\to X$ is a holomorphic fiber bundle whose base and fiber are one of the Riemann surfaces $\C\P^1$, $\C$, $\C^*$, or a torus $\C/\Gamma$, then $E$ is an Oka manifold. In particular, all {\em minimal Hirzebruch surfaces} $H_l$ $(l=0,1,2,\ldots)$ are Oka manifolds. 
\end{corollary}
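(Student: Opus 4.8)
The plan is to derive everything from two facts already established in the excerpt: Corollary~\ref{p6.1}, which tells us that each of the Riemann surfaces $\C\P^1$, $\C$, $\C^*$, and $\C/\Gamma$ is Oka (equivalently, not Kobayashi hyperbolic), and Theorem~\ref{OP-fiberbundles}, which propagates the Oka property across a holomorphic fiber bundle with Oka fiber. No new analysis is required, since all of it has already been absorbed into these two statements.

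First I would observe that, by hypothesis, the base $X$ of $\pi\colon E\to X$ is one of $\C\P^1$, $\C$, $\C^*$, or a torus $\C/\Gamma$, and is therefore an Oka manifold by Corollary~\ref{p6.1}. Likewise the fiber $Y$ is one of these same four surfaces and hence is Oka. Since $\pi\colon E\to X$ is a holomorphic fiber bundle whose fiber $Y$ is Oka, Theorem~\ref{OP-fiberbundles} asserts that $E$ is Oka precisely when $X$ is Oka; as $X$ is indeed Oka, we conclude that the total space $E$ is Oka. This settles the general statement directly.

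For the Hirzebruch surfaces I would invoke the classical description $H_l=\P(\cO_{\C\P^1}\oplus \cO_{\C\P^1}(l))$ as the fiberwise projectivization of a rank-two holomorphic vector bundle over $\C\P^1$. This realizes $H_l$ as a locally trivial holomorphic $\C\P^1$-bundle over $\C\P^1$, so that both base and fiber equal the Oka manifold $\C\P^1$; the first part of the corollary then yields that $H_l$ is Oka for every $l=0,1,2,\ldots$.

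The only point deserving a moment's care---hardly a genuine obstacle---is this last identification: one must recall that $H_l$ is a genuinely locally trivial holomorphic $\C\P^1$-bundle over $\C\P^1$, which is immediate from its construction as a projectivization of a vector bundle. Everything else is a mechanical application of the two cited results, so the corollary follows without any further estimates.
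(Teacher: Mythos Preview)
Your proof is correct and follows essentially the same route as the paper: invoke Corollary~\ref{p6.1} to see that base and fiber are Oka, then apply Theorem~\ref{OP-fiberbundles}, and for the Hirzebruch surfaces use that $H_l$ is a holomorphic $\C\P^1$-bundle over $\C\P^1$. The only difference is cosmetic---you spell out the projectivization $\P(\cO\oplus\cO(l))$ explicitly, whereas the paper simply cites this fact from \cite{Barth-Hulek}.
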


\begin{proof}
By Corollary \ref{p6.1} the list $\{\C\P^1, \C, \C^*, \C/\Gamma\}$ contains all Riemann surfaces which are Oka, so the first statement follows from Proposition \ref{OP-fiberbundles}. For the second one, note that minimal Hirzebruch surfaces are holomorphic $\C\P^1$-bundles over $\C\P^1$ \cite[p.\ 191]{Barth-Hulek}.
\end{proof}

	
\subsubsection{Complements of algebraic subvarieties}
\label{ss:complements}
We address the following question.

\smallskip
\noindent {\em Question:}
If $\wh Y$ is an algebraic Oka manifold and $A$ is a closed algebraic subvariety of $\wh Y$, when is the complement $Y=\wh Y\setminus A$ an Oka manifold?
\smallskip

The answer is negative in general if $A$ is a hypersurface; in such case the complement can even be Kobayashi hyperbolic. For example, Green's theorem \cite{Green} says that the complement of $2n+1$ hyperplanes in $\C\P^n$  in general position is hyperbolic. Kobayashi asked (\cite{Kobayashi1}, problem 3 on p.\ 132) whether the complement of a generic hypersurface of sufficiently high degree is hyperbolic. An affirmative answer was given for $n = 2$ by Siu and Yeung \cite{Siu-Yeung1996} in 1996, and in any dimension $n>1$ by Siu \cite{Siu2012} in 2012. 

Therefore it is natural to look for low degree hypersurfaces with Oka complements, and at algebraic subvarieties of lower codimension. The work on the first question was started recently by A.\ Hanysz \cite{Hanysz1}; see \S\ref{ss:hypersurfaces} below. In this section we show that any algebraic subvariety of codimension $>1$ in a Euclidean space, or in a complex Grassmannian, has Oka complement. 

%
%
%
%
\begin{proposition}
\label{complements}
{\em \cite[Proposition 5.5.8, p.\ 201]{FF:book}}
Let $\wh  Y$ denote one of the manifolds $\C^k$, $\C\P^k$, or a complex Grassmannian. If $A$ is an algebraic subvariety of $\wh Y$ of complex codimension $>1$, then the complement $Y=\wh Y\setminus A$ is an Oka manifold.        
\end{proposition}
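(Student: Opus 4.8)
The plan is to show that $Y=\wh Y\setminus A$ is \emph{elliptic}, i.e.\ that it carries a dominating holomorphic spray; by Gromov's theorem (see \S\ref{ss:ellipticsub}) an elliptic manifold is Oka, so this suffices. Concretely, I would produce a trivial spray bundle $Y\times\C^m\to Y$ together with a holomorphic map $s\colon Y\times\C^m\to Y$ satisfying $s(y,0)=y$ and whose vertical differential $\partial_t s(y,0)\colon\C^m\to T_yY$ is surjective at every $y\in Y$ (domination). The natural way to build such an $s$ on $Y$ is the classical composed-flow construction: find finitely many $\C$-complete holomorphic vector fields $V_1,\dots,V_m$ on $\wh Y$ that are \emph{tangent to $A$}, and set $s(y,t)=\phi^1_{t_1}\circ\cdots\circ\phi^m_{t_m}(y)$, where $\phi^j$ denotes the flow of $V_j$. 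Tangency to $A$ guarantees that each $\phi^j_t$ preserves $Y=\wh Y\setminus A$, so $s$ really maps into $Y$, and $\partial_{t_j}s(y,0)=V_j(y)$.

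Thus the whole problem reduces to constructing $\C$-complete vector fields on $\wh Y$ that are tangent to $A$ and whose values $V_1(z),\dots,V_m(z)$ span $T_z\wh Y$ at every point $z\in Y$. For $\wh Y=\C^k$ I would use complete polynomial shear-type fields built from the ideal of $A$: since $A$ is algebraic of codimension at least two, its ideal has height $\ge 2$ and contains polynomials $P,Q$ whose differentials are independent along a generic point of $A$; antisymmetric/shear combinations formed from $P$ and $Q$ are complete, are tangent to $\{P=Q=0\}\supseteq A$, and vary enough to span $T_z\C^k$ away from $A$. Composing their flows then gives a dominating spray on $\C^k\setminus A$, which is therefore elliptic and hence Oka.

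The main obstacle is exactly this last spanning-plus-tangency requirement, and it is where the hypothesis $\codim A>1$ is indispensable. A naive transversality or dimension count on graphs of correction sections fails once the source dimension $n$ is large, so the codimension-two condition cannot be used softly; it must be built into the structure of the vector fields. The point is that the sheaf of holomorphic vector fields tangent to $A$ still generates the full tangent space at each point of $\wh Y\setminus A$ precisely because $A$ has codimension $\ge 2$, whereas for a hypersurface tangency forces a transverse defect and the complement may even be Kobayashi hyperbolic (Green, Siu). For $\wh Y=\C\P^k$ and the Grassmannians I would carry out the same construction in the natural affine charts, obtaining the complete tangent fields from the algebraic group acting transitively on $\wh Y$, and then globalize: by Gromov's localization principle the existence of a dominating spray is a local-to-global matter, so spanning families of $A$-tangent complete fields over charts patch to a dominating spray on $\wh Y\setminus A$. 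Verifying completeness and the spanning property of these explicitly chosen fields, uniformly at every point of $Y$, is the technical heart of the argument.
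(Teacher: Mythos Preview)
Your approach is correct in outline, but it is \emph{not} the route the paper takes for this particular proposition. The paper verifies CAP directly: given a compact convex $K\subset\C^n$ and a holomorphic $f\colon K\to \C^k\setminus A$, it approximates $f$ by a polynomial map $P\colon\C^n\to\C^k$, observes that for generic $P$ the preimage $\Sigma=P^{-1}(A)\subset\C^n$ is algebraic of codimension $>1$, and then invokes the existence of a Fatou--Bieberbach map $\phi\colon\C^n\to\C^n\setminus\Sigma$ close to the identity on $K$; the composition $P\circ\phi$ is the desired entire approximant. The projective and Grassmannian cases are reduced to this by lifting through the quotient projection $\C^{k+1}_*\to\C\P^k$ (resp.\ the Stiefel cover).

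Your strategy---building complete vector fields tangent to $A$ and composing flows---is in fact the content of Proposition~\ref{tame-elliptic} and Corollary~\ref{cor:subel-compl} later in the paper, so it is a legitimate alternative. Two remarks on your write-up. First, the standard construction does not use ``antisymmetric combinations of $P,Q$'' but rather shear fields of the form $g(\pi(z))\,v$, where $v\in\C^k$, $\pi$ is projection along $v$, and $g$ vanishes on $\pi(A)$; these are automatically complete and vanish on $A$. Second, Gromov's localization principle for the projective/Grassmannian case (Proposition~\ref{localization-subell}) only yields \emph{subellipticity}, not a single global dominating spray---there is no known mechanism to patch local sprays into one spray in the holomorphic category. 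This does not hurt you, since subelliptic still implies Oka, but you should state the conclusion correctly. The trade-off between the two approaches: the paper's CAP argument is shorter and self-contained once one accepts the Fatou--Bieberbach input, while your route gives the stronger geometric conclusion of (sub)ellipticity and extends to tame non-algebraic subvarieties.
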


\begin{proof}[Sketch of proof] We describe the main idea in the case $\wh Y=\C^k$. Suppose that $K$ is a compact convex set in $\C^n$ and $f\colon K\to Y=\C^k\setminus A$ is a holomorphic map. Approximate $f$ uniformly on a neighborhood of $K$ by a polynomial map $P\colon \C^n\to\C^k$. For a generic choice of $P$, the preimage $\Sigma:=P^{-1}(A)$ is an algebraic subvariety of codimension $>1$ in $\C^n$. The key point now is that there exists a Fatou-Bieberbach map $\phi\colon \C^n \to \C^n\setminus \Sigma$ which is close to the identity map on $K$ \cite[Corollary 4.12.2, p.\ 144]{FF:book}. Hence $P\circ\phi\colon \C^n\to \C^k$ is an entire map with range in $Y=\C^k\setminus A$ which approximates $f$ on $K$. 

A similar construction works for projective spaces and Grassmannians. Denote by $\pi\colon \C^{k+1}_* \to\C\P^k$ the standard quotient projection. A holomorphic map $f\colon K\to \C\P^k\setminus A$ from a compact convex set $K\subset \C^n$ lifts to a holomorphic map $g\colon K\to \C^{k+1}_*\setminus \pi^{-1}(A)$, and by the above argument we can find an entire approximation $\wt g\colon \C^n \to \C^{k+1}_*\setminus \pi^{-1}(A)$. The composition $\pi\circ \wt g\colon \C^n\to\C\P^k\setminus A$ is then an entire map approximating $f$ on $K$. 
\end{proof}

\begin{corollary}
\label{Hopf}
{\rm \cite[Corollary 5.5.9, p.\ 202]{FF:book}}
Every minimal Hopf manifold is Oka.   
\end{corollary}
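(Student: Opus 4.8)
The plan is to exhibit every minimal Hopf manifold as an unramified holomorphic quotient of $\C^n\setminus\{0\}$ and then chain together the two preceding propositions. Recall that, for $n\ge 2$, a minimal Hopf manifold is a compact complex manifold of the form $H=(\C^n\setminus\{0\})/\Gamma$, where $\Gamma$ is a discrete group of holomorphic automorphisms of $\C^n\setminus\{0\}$ acting freely and properly discontinuously; in the classical case $\Gamma=\langle f\rangle\cong\Z$ is generated by a holomorphic contraction $f$ fixing the origin (all eigenvalues of the differential $Df(0)$ of modulus less than one). Since $\C^n\setminus\{0\}$ is simply connected for $n\ge 2$, the quotient projection $\pi\colon \C^n\setminus\{0\}\to H$ is in fact the universal covering of $H$, and in particular it is an unramified holomorphic covering map.

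First I would observe that the covering space $\C^n\setminus\{0\}$ is itself an Oka manifold. Indeed, $\C^n\setminus\{0\}=\C^n\setminus A$ with $A=\{0\}$, and $\{0\}$ is an algebraic subvariety of $\C^n$ of complex codimension $n\ge 2>1$. Proposition \ref{complements}, applied with $\wh Y=\C^n$, therefore shows that $\C^n\setminus\{0\}$ enjoys $\CAP$ and hence is Oka. Next I would invoke the covering invariance of the Oka property: since $\pi\colon \C^n\setminus\{0\}\to H$ is an unramified holomorphic covering whose total space is Oka, Proposition \ref{OP-covering} yields immediately that the base $H$ is Oka as well. This completes the argument.

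The proof is essentially formal once the covering structure is identified, so there is no genuine analytic obstacle; the content is entirely carried by the two propositions already established. The only point that requires care is to confirm that $\Gamma$ acts \emph{freely} and properly discontinuously on $\C^n\setminus\{0\}$, so that $\pi$ is a true unramified covering rather than a quotient with fixed points --- this is precisely what the minimality (equivalently, the contraction) hypothesis guarantees, the unique fixed point $0$ of the generator having been deleted. It is also here that the standing assumption $n\ge 2$ enters, ensuring simultaneously that the quotient is a manifold and that the removed locus $\{0\}$ has codimension greater than one, as required to apply Proposition \ref{complements}.
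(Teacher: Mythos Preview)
Your proof is correct and follows exactly the same approach as the paper: identify the minimal Hopf manifold as an unramified holomorphic quotient of $\C^n\setminus\{0\}$ with $n>1$, apply Proposition \ref{complements} to see that $\C^n\setminus\{0\}$ is Oka, and then use Proposition \ref{OP-covering} to pass the Oka property to the quotient. The paper's proof is simply a more compressed version of yours.
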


\begin{proof}
A minimal Hopf manifold is an unramified holomorphic quotient of $\C^n\setminus\{0\}$ for some $n>1$. Since $\C^n\setminus\{0\}$ is an Oka manifold by Proposition \ref{complements}, the conclusion follows from Proposition \ref{OP-covering}.
\end{proof}

Proposition \ref{complements} fails in general for non-algebraic subvarieties of $\C^n$ irrespective of their dimension. Indeed, Rosay and Rudin \cite{Rosay-Rudin} found a discrete set in $\C^n$ whose complement is volume hyperbolic, so it fails to be Oka. (See \cite[Theorem 4.7.2]{FF:book}. Another example can be found on p.\ 202 in \cite{FF:book}.) However, complements of {\em tame} analytic subvarieties of codimension $>1$ in $\C^n$ are Oka; see Proposition \ref{tame-elliptic} below.

\subsubsection{Complements of hypersurfaces}
\label{ss:hypersurfaces}
We have already mentioned in \S\ref{ss:complements} that the complement of a generic algebraic hypersurface of high degree in $\C^n$ or $\C\P^n$ is hyperbolic. Hence it is natural to look for hypersurfaces of low degree with Oka complements. Examples of this type have recently been studied by A.\ Hanysz \cite{Hanysz1}. 

His first result concerns hyperplane arrangements in $\C\P^n$. Let $F_1,\ldots,F_N$ be nonzero homogeneous linear forms in $n + 1$ complex variables. We say that the hyperplanes in $\C\P^n$ defined by the equations $F_j =0$, $j = 1,\ldots,N$, are {\em in general position} if every subset of $F_1,\ldots,F_N$ of size at most $n + 1$ is linearly independent. If $N \le n + 1$, then a set of $N$ hyperplanes is in general position if and only if coordinates on $\C\P^n$ 
can be chosen so that these are the coordinate hyperplanes $z_j = 0$, $j = 0,\ldots,N-1$. By Green's theorem \cite{Green} the complement of at least $2n + 1$ hyperplanes in general position is hyperbolic, and the complement of a collection of at most $2n$ hyperplanes is never hyperbolic. For hyperplanes not in general position, some necessary conditions for hyperbolicity of the complement are known; see \cite[\S 3.10]{Kobayashi2}.

\begin{theorem}
\label{th:hyperplanes}
{\rm \cite[Theorem 3.1]{Hanysz1}}
Let $H_1,\ldots,H_N$ be distinct hyperplanes in $\C\P^n$. Then the complement $X= \C\P^n\setminus \bigcup_{j=1}^N H_j$ is Oka if and only if the hyperplanes $H_j$ are in general position and $N\le n + 1$. Furthermore, if such $X$ is not Oka then it is also not dominable by $\C^n$ and not $\C$-connected.
\end{theorem}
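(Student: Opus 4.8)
The plan is to identify the ``good'' configurations (general position with $N\le n+1$) with those for which the defining forms are linearly \emph{independent}, and then to prove the two implications by completely different mechanisms: a structural identification of $X$ for the ``if'' direction, and a Borel-type argument about nowhere-vanishing entire functions for the ``only if'' and the ``furthermore'' parts. Write $H_j=\{\ell_j=0\}$ for nonzero linear forms $\ell_j$ on $\C^{n+1}$. Since at most $n+1$ vectors in $(\C^{n+1})^*$ can be linearly independent, the $\ell_j$ are independent if and only if their number is $N\le n+1$ and they are independent; and for $N\le n+1$ the latter is exactly the general position hypothesis (every subset, of size $\le N\le n+1$, is independent). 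Thus the theorem reduces to the assertion $X\text{ Oka}\iff\ell_1,\dots,\ell_N\text{ linearly independent}$, with non-Oka forcing the strong failures of dominability and $\C$-connectedness.

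For the ``if'' direction, assume the $\ell_j$ are independent. By the remark preceding the theorem, coordinates on $\C\P^n$ may be chosen so that $H_j=\{z_{j-1}=0\}$ for $j=1,\dots,N$. Because $H_1=\{z_0=0\}$ is removed, every point of $X$ has $z_0\neq 0$, so $X$ lies entirely in the affine chart $z_0=1$; in the coordinates $w_k=z_k/z_0$ it becomes $\{w_1\cdots w_{N-1}\neq 0\}$, that is, $X\cong(\C^*)^{N-1}\times\C^{\,n-N+1}$. Since $\C^*$ and $\C$ are Oka (Corollary~\ref{p6.1}) and a finite product of Oka manifolds is Oka (Theorem~\ref{OP-fiberbundles}), $X$ is Oka.

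For the converse I would prove the contrapositive in its strongest form: if $\ell_1,\dots,\ell_N$ are linearly \emph{dependent}, then $X$ is neither dominable by $\C^n$ nor $\C$-connected. (This also yields non-Oka, since a manifold with $\CAP$ is dominable: approximate, via $\CAP$, a local biholomorphism from a ball in $\C^n$ by an entire map, which then has maximal rank near the centre.) Let $f\colon\C^n\to X$ be holomorphic. As $\C^n$ is contractible, $f$ lifts to $F\colon\C^n\to\C^{n+1}\setminus\{0\}$, and each $h_k:=\ell_k\circ F$ is a nowhere-vanishing entire function. Any nontrivial relation $\sum_k c_k\ell_k=0$ among the forms gives $\sum_k c_k h_k\equiv 0$. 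Fixing a minimal dependent subset (size $m\ge 3$, all coefficients nonzero), Borel's theorem on linear relations among nowhere-vanishing entire functions partitions the indices into blocks of size $\ge 2$ within which every ratio $h_i/h_j$ is constant. For such a pair $i,j$, the ratio $(\ell_i\circ F)/(\ell_j\circ F)$ is constant, so $f$ maps $\C^n$ into the hyperplane $\{\ell_i=\lambda\ell_j\}\subset\C\P^n$; hence $\rank f\le n-1$ everywhere and $f$ cannot dominate. Therefore $X$ is not dominable by $\C^n$.

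The same computation applied to an entire curve $g\colon\C\to X$ (lifted to $G$, with $h_k=\ell_k\circ G$) shows that $g$ must lie in some hyperplane $\{\ell_i/\ell_j=\text{const}\}$ with $i,j$ in a common Borel block, the constant being determined by $g(0)$. To deduce non-$\C$-connectedness I would fix the minimal dependent set and choose two \emph{generic} points $p,p'\in X$ so that, for every partition of the index set into blocks of size $\ge 2$, some in-block pair satisfies $\ell_i(p)/\ell_j(p)\neq\ell_i(p')/\ell_j(p')$; this rules out only finitely many proper subvarieties, so such $p,p'$ exist. No entire curve can then pass through both $p$ and $p'$, so $X$ is not $\C$-connected. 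The main obstacle is the correct invocation of Borel's theorem for nowhere-vanishing entire functions of \emph{several} variables (the mechanism that underlies Green's theorem \cite{Green}); once the forced collapse of one ratio $h_i/h_j$ to a constant is secured, both negative conclusions follow by elementary bookkeeping, the only remaining delicate point being the combinatorial selection of a non-joinable pair $p,p'$ uniformly over all admissible Borel partitions.
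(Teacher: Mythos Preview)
The survey paper does not prove this theorem; it is quoted as a result of Hanysz \cite[Theorem~3.1]{Hanysz1} without proof, so there is no argument in the paper to compare against. Your approach is the natural one and is essentially what Hanysz does.

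Your ``if'' direction is correct: with $N\le n+1$ forms in general position taken as coordinate forms, $X\cong(\C^*)^{N-1}\times\C^{\,n-N+1}$, which is Oka by Theorem~\ref{OP-fiberbundles}. Your non-dominability argument is also sound. One simplification: once you fix a \emph{minimal} dependent subset $\{\ell_1,\dots,\ell_m\}$ with $m\ge 3$, Borel's partition into blocks on which ratios are constant must consist of a \emph{single} block (each block sums to zero, and a proper block would give a proper sub-dependence). Hence \emph{all} ratios $h_i/h_j$ are constant along any given entire map, not just one pair.

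That observation also repairs the one genuine gap in your write-up. Your $\C$-connectedness argument, as stated, only excludes a single entire curve joining $p$ and $p'$, whereas in this survey $\C$-connectedness refers to connection by finite \emph{chains} of entire lines (see \S\ref{ss:flex}). But since every entire curve $g\colon\C\to X$ has all the ratios $\ell_i/\ell_j$ ($i,j$ in the minimal set) constant along it, the holomorphic map
\[
\psi\colon X\longrightarrow(\C^*)^{m-1},\qquad \psi=(\ell_2/\ell_1,\dots,\ell_m/\ell_1),
\]
is constant on every entire curve in $X$, hence constant on any chain of such curves. If $X$ were $\C$-connected, $\psi$ would be globally constant, contradicting the pairwise non-proportionality of the $\ell_i$ (the hyperplanes are distinct). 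This replaces your genericity selection of $p,p'$ and handles chains for free.
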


Hanysz's second result, which concerns complements of certain meromorphic graphs, is motivated by the theorem of Buzzard and Lu \cite[Proposition 5.1]{Buzzard-Lu} that the complement in $\C\P^2$ of a smooth cubic curve is dominable by $\C^2$. Their method was to construct a meromorphic function associated with the cubic, and a branched covering map from the complement of the graph of that function in $\C\times \C$ onto the complement of the cubic, and then show that the graph complement is dominable. Hanysz proved that the graph complement is in fact an Oka manifold. His result applies to meromorphic functions on Oka manifolds other than $\C$, subject to an additional hypothesis. 

\begin{theorem}
\label{th:merograph}
{\rm \cite[Theorem 4.6]{Hanysz1}}
Let $X$ be a complex manifold, and let $m\colon X \to \C\P^1$ be a holomorphic map. Denote by $G_m$ the graph of $m$. Suppose that $m$ can be written in the form $m = f + \frac{1}{g}$ for holomorphic functions $f$ and $g$. Then $(X \times \C)\setminus G_m$ is Oka if and only if $X$ is Oka.
\end{theorem}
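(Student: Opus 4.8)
The plan is to prove both directions of the equivalence, with the interesting content in showing that $(X\times\C)\setminus G_m$ is Oka when $X$ is. For the easy direction, I would observe that if $(X\times\C)\setminus G_m$ is Oka, then projecting to $X$ exhibits $X$ as the base of a map whose generic fibers are copies of $\C\setminus\{m(x)\}\cong\C^*$; I would try to realize $X$ as an Oka manifold by relating it to the total space via Theorem \ref{OP-fiberbundles}, or more directly by composing a holomorphic map $K\to X$ with a section into the graph complement. The forward implication ``$X$ Oka $\Rightarrow$ graph complement Oka'' is the substantial half.

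For the hard direction, the natural strategy is to exploit the decomposition $m=f+\tfrac1g$ to build an explicit fiber-preserving biholomorphism (or stratified fiber bundle structure) over $X$. The point of the hypothesis $m=f+\tfrac1g$ is that the graph $G_m=\{(x,w): w=f(x)+\tfrac1{g(x)}\}$ can be ``cleared of denominators'': on the locus where $g\ne 0$ the complement of $G_m$ in the $\C$-fiber is $\C\setminus\{f(x)+\tfrac1{g(x)}\}$, an affine line minus a point, i.e.\ a copy of $\C^*$, whereas on the zero locus $\{g=0\}$ the value $m(x)=\infty$ and the whole fiber $\C$ survives. So I would stratify $X$ as $X=X_0\supset X_1$ with $X_1=\{g=0\}$ and show that $\pi\colon(X\times\C)\setminus G_m\to X$ is a stratified holomorphic fiber bundle, with fiber $\C^*$ over the open stratum $X\setminus\{g=0\}$ and fiber $\C$ over (components of) $\{g=0\}$. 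The explicit trivialization over the $\C^*$-stratum should come from the map $(x,w)\mapsto(x,\,(w-f(x))g(x))$, which sends the fiber $\C\setminus\{m(x)\}$ biholomorphically onto $\C\setminus\{1\}\cong\C^*$; checking that this patches to a genuine local bundle structure, and handling the behavior across $\{g=0\}$, is where the denominator-clearing form of $m$ is essential.

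Once the stratified fiber bundle structure is in hand, the conclusion is almost immediate: by Corollary \ref{p6.1} both $\C^*$ and $\C$ are Oka, so all fibers over all strata are Oka manifolds, and Theorem \ref{OP-fiberbundles} (or, in the stratified setting, the fiber-bundle portion of Theorem \ref{th:CAP} applied through an ascent-type argument) gives that the total space $(X\times\C)\setminus G_m$ is Oka precisely when the base $X$ is Oka. In effect the whole theorem reduces to verifying the stratified bundle claim, after which the functoriality results do the rest and simultaneously yield both implications.

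The main obstacle I anticipate is precisely the verification that $\pi$ is a genuine stratified holomorphic fiber bundle rather than merely a submersion with Oka fibers. The subtlety is twofold: first, one must produce honest local trivializations over neighborhoods in each stratum (the formula above trivializes over any open set on which $g$ is nonvanishing, but one should confirm local triviality along the closed stratum $\{g=0\}$, where $G_m$ escapes to infinity and the fiber jumps from $\C^*$ to $\C$); second, one must ensure the stratification by $\{g=0\}$ is by closed complex subvarieties with smooth strata, which may require reducing to the case where $g$ defines a reduced hypersurface or passing to a finer stratification of its singular locus. This is exactly the place where the hypothesis $m=f+\tfrac1g$ is doing real work — without control of the pole divisor of $m$ one cannot guarantee the fiber-bundle structure — so I would expect the bulk of the argument to go into making the local triviality across the pole locus precise.
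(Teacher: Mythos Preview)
The stratified-bundle picture you describe is correct---the fiber is $\C^*$ over $X\setminus\{g=0\}$ and $\C$ over $\{g=0\}$---but your inference from it has a genuine gap. Theorem \ref{OP-fiberbundles} is stated only for honest fiber bundles; there is no ascent theorem in the paper for \emph{stratified} fiber bundles with Oka fibers over an Oka base, and your appeal to ``the fiber-bundle portion of Theorem \ref{th:CAP}'' does not help, since that result concerns the Oka principle for \emph{sections}, not the Oka property of the total space. In fact the remark immediately following the theorem notes that unless $m$ is pole-free or identically $\infty$, the projection $(X\times\C)\setminus G_m\to X$ is \emph{not} a topological fibration, hence not an Oka map, so Corollary \ref{cor:up-down} is also unavailable. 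The step you call ``almost immediate'' is exactly where the argument breaks, while the obstacle you anticipated (local triviality along the pole locus) is by contrast entirely routine.

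The approach Hanysz actually takes, as indicated in the paper's remark, is different. The decomposition $m=f+1/g$ lets one write the graph as the zero set of $\Phi(x,w)=g(x)(w-f(x))-1$ on all of $X\times\C$; the complete vertical vector field $\Phi\,\partial_w$ then furnishes a global fiber-dominating spray, so the projection is an \emph{elliptic submersion} (Definition \ref{def:elsub}), not merely a stratified bundle. The same decomposition also yields the global holomorphic section $x\mapsto(x,f(x))$. With both a spray and a section in hand one can verify CAP for the total space directly: project a given map $K\to E$ down to $X$, approximate there using that $X$ is Oka, pull the elliptic submersion back to the convex Stein domain, and invoke the Oka principle for sections of elliptic submersions (Theorem \ref{SES:OP}) together with the pulled-back section to produce the required holomorphic lifting with approximation on $K$. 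Your second idea for the easy direction---compose a map $K\to X$ with the section, approximate in the total space, and project back---is correct and is essentially what is used.
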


Unfortunately this is not enough to settle the question of whether the complement of a smooth cubic in $\C\P^2$ is Oka, since it is not known whether the Oka property passes down to a (finite) ramified holomorphic quotient.

As Hanysz remarked, the existence of the decomposition $m = f + \frac{1}{g}$ is equivalent to the condition that the projection map $(X \times \C)\setminus G_m \to X$ admits a global holomorphic section. This projection is an elliptic submersion (Def.\ \ref{def:elsub} below). (It is easy to see that it is a stratified elliptic submersion, but Hanysz showed that there is no need to stratify the base \cite[Remark 4.12]{Hanysz1}.) However, unless either $m$ has no poles or $m\equiv \infty$, the projection is not an Oka map (see \S\ref{ss:Okamaps}) because it is not a topological fibration.

Another direction started by Hanysz in \cite{Hanysz2} is to consider Oka properties of the space  $X=\cO(\C\P^1,\C\P^1)$ of holomorphic self-maps of the Riemann sphere. It is well known that $X=\bigcup_{d=0}^\infty X_d$, where $X_d$ is the connected component of $X$ consisting of all rational maps $f(z)=p(z)/q(z)$ of degree precisely $d$. Is $X_d$ Oka for every $d$? For $d=0,1,2$, $X_d$ is complex homogeneous, and hence Oka in view of Theorem \ref{CAP:Lie}. For $d=3$, Hanysz proved that $X_3$ is strongly dominable and strongly $\C$-connected (Theorems 1.7 and 1.8 in \cite{Hanysz2}). The question whether $X_d$ for $d\ge 3$ is Oka remains open.

\subsubsection{Toric varieties}
\label{ss:toric}
The following result is due to F.\ L\'arusson. 

\begin{theorem}
\label{th:toric}
{\rm  \cite[Theorem 3]{Larusson-toric}}
Every smooth complex toric variety is Oka. 
\end{theorem}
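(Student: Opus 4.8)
The plan is to realize a smooth toric variety as the base of a holomorphic principal bundle whose total space is the complement of a high-codimension algebraic subvariety in a Euclidean space, and then to combine Theorem \ref{OP-fiberbundles} with Proposition \ref{complements}. Concretely, I would use Cox's homogeneous-coordinate quotient construction. Let $X = X_\Sigma$ be the smooth toric variety attached to a fan $\Sigma$ in $N_\R \cong \R^n$, with rays $\rho_1,\ldots,\rho_d$ and primitive generators $u_{\rho_1},\ldots,u_{\rho_d}$. First I would dispose of a possible torus factor: if the rays do not span $N_\R$, then $X \cong X' \times (\C^*)^k$, where $X'$ is a smooth toric variety whose rays span its ambient space and $(\C^*)^k$ is a torus. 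The torus is Oka by Theorem \ref{CAP:Lie}, so by Theorem \ref{OP-fiberbundles} it suffices to treat $X'$; thus I may assume that the rays of $\Sigma$ span $N_\R$.

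Under this assumption, dualizing the exact sequence $0 \to M \to \Z^d \to \mathrm{Cl}(X) \to 0$ (where $M = N^\vee$ and the middle map sends $m$ to $(\langle m, u_{\rho_j}\rangle)_j$) produces a complex algebraic subgroup $G = \Hom(\mathrm{Cl}(X), \C^*) \subset (\C^*)^d$, which is an abelian complex Lie group. Cox's theorem then presents $X$ as a geometric quotient $X = (\C^d \setminus Z)/G$, where the exceptional set $Z$ is the algebraic subvariety defined by the irrelevant ideal, and --- this is exactly where smoothness enters --- the $G$-action on $\C^d \setminus Z$ is free, so that the quotient map $\pi \colon \C^d \setminus Z \to X$ is a holomorphic principal $G$-bundle. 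Since $G$ is an abelian complex Lie group, it is homogeneous and hence Oka by Theorem \ref{CAP:Lie}, so $\pi$ is a holomorphic fiber bundle with Oka fiber.

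The remaining input is a codimension estimate: I claim that $\codim Z \ge 2$ in $\C^d$. Indeed, the irreducible components of $Z$ are the coordinate subspaces $\{x : x_\rho = 0 \text{ for all } \rho \in P\}$ indexed by the \emph{primitive collections} $P$ of $\Sigma$ --- the minimal sets of rays that do not together span a cone of $\Sigma$ --- and the codimension of such a component equals $|P|$. Because each individual ray spans a cone of $\Sigma$, every primitive collection has at least two elements, so every component of $Z$ has codimension at least $2$. By Proposition \ref{complements}, the complement $\C^d \setminus Z$ is therefore an Oka manifold.

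Finally I would assemble the pieces: since the total space $\C^d \setminus Z$ of the holomorphic fiber bundle $\pi \colon \C^d \setminus Z \to X$ is Oka and its fiber $G$ is Oka, Theorem \ref{OP-fiberbundles} yields that the base $X$ is Oka, which completes the argument. I expect the main obstacle to be setting up Cox's construction in a form compatible with these theorems --- specifically, verifying that for a smooth $\Sigma$ the $G$-action is free and the quotient is a genuine holomorphic, locally trivial principal bundle (rather than merely a categorical quotient), so that Theorem \ref{OP-fiberbundles} applies. The reduction to spanning fans and the primitive-collection codimension count are then routine.
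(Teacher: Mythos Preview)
Your proposal is correct and follows essentially the same route as the paper: reduce to the torus-factor-free case, invoke Cox's quotient $X=(\C^d\setminus Z)/G$, use $\codim Z\ge 2$ together with Proposition \ref{complements} to see that $\C^d\setminus Z$ is Oka, note that $G$ is Oka by Theorem \ref{CAP:Lie}, and conclude by Theorem \ref{OP-fiberbundles}. The one point where the paper is more explicit than you is precisely the obstacle you flag at the end: it does not take local triviality of $\pi\colon\C^d\setminus Z\to X$ for granted from freeness alone, but covers $\C^d\setminus Z$ by $G$-invariant Zariski-open affine (hence Stein) sets of the form $\C^d\setminus(H_1\cup\cdots\cup H_k)$ and then appeals to slice theory for actions of \emph{reductive} groups (Snow, Luna) to get that each $U\to U/G$ is a holomorphic fiber bundle; for this it also records that $G$ is a product of a torus and a finite abelian group, hence reductive. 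Your primitive-collection argument for $\codim Z\ge 2$ is a nice explicit version of what the paper simply cites from \cite{Cox-Little-Schenck}.
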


\begin{proof}
Let $X$ be a smooth toric variety over $\C$.  If $X$ has a torus factor, say $X$ is isomorphic to $Y\times(\C^*)^k$, where $k\geq 1$ and $Y$ is another smooth toric variety, then, since $(\C^*)^k$ is Oka, Theorem \ref{OP-fiberbundles} shows that $X$ is Oka if and only if $Y$ is Oka.  

Hence we may assume that $X$ has no torus factor, so the construction in \cite[\S 5.1]{Cox-Little-Schenck} applies; see in particular Theorem 5.1.11.  We can write $X$ as a geometric quotient
\[ 
		X = (\C^m\setminus Z)/G, 
\]
where $Z$ is a union of coordinate subspaces of $\C^m$, and $G$ is a complex subgroup of $(\C^*)^m$ acting on $\C^m\setminus Z$ by diagonal matrices.  In fact, $G$ is isomorphic to the product of a torus and a finite abelian group \cite[Lemma 5.1.1]{Cox-Little-Schenck}, so $G$ is reductive.  Furthermore, $\codim Z\geq 2$ (\cite{Cox-Little-Schenck}, Exercise 5.1.13), so $\C^m\setminus Z$ is Oka by Proposition \ref{complements}. Since $X$ is smooth, $G$ acts freely on $\C^m\setminus Z$ \cite[Exercise 5.1.11]{Cox-Little-Schenck}.  

We claim that the projection $\C^m\setminus Z\to X$ is a holomorphic fiber bundle; since the fiber $G$ is Oka by Theorem \ref{CAP:Lie}, this will imply that $X$ is Oka in view of Theorem \ref{OP-fiberbundles}.

Note that $Z$, being a union of coordinate subspaces, is the intersection of unions of coordinate hyperplanes.  Thus $\C^m\setminus Z$ is the union of Zariski-open sets of the form $U=\C^m\setminus (H_1\cup\cdots\cup H_k)$, where $H_1,\dots,H_k$ are coordinate hyperplanes.  Each $U$ is affine algebraic (hence Stein), as well as $G$-invariant.  By slice theory for actions of reductive groups, the quotient map $U\to U/G$ is a holomorphic fiber bundle \cite[Corollary 5.5]{Snow}, or, from the algebraic point of view, a locally trivial fibration in the \'etale sense \cite[Corollaire 5]{Luna}.  It follows that $\C^m\setminus Z\to X$ is a holomorphic fiber bundle.
\end{proof}

\begin{remark}
(a) Every toric variety is birationally equivalent to complex projective space, which is Oka.  Thus, if we knew that the Oka property was birationally invariant, Theorem \ref{th:toric} would be immediate.  At present, it is not even known how the Oka property behaves with respect to blowing up a point, except in some special cases (see the results in \cite[pp.\ 252--255]{FF:book}). 

(b) It follows from \cite[Theorem 2.1]{AKZ}, which is more difficult to prove than Theorem \ref{th:toric}, 
that the smooth part of an {\em affine} toric variety is elliptic, and hence Oka (cf. Examples \ref{example:elliptic} (B) and \ref{ex:flexible} below). Smooth affine toric varieties are of the form $\C^n \times (\C^*)^m$, so they are obviously Oka.  
\end{remark}


\subsection{Elliptic and subelliptic manifolds}
\label{ss:elliptic}
The notion of a {\em dominating spray} and of an {\em elliptic manifold} was introduced in Oka theory by M.\ Gromov \cite{Gromov:OP}. The main reference for this subsection is \cite[\S 5.15]{FF:book}.

%
%
%
%
%
\begin{definition}
\label{def:spray}
A {\em (global, holomorphic) spray} on a complex manifold $Y$ is a triple $(E,\pi,s)$ consisting of a holomorphic vector bundle $\pi \colon E\to Y$ {\rm (a spray bundle)} and a holomorphic map $s\colon E\to Y$ {\rm (a spray map)} such that for each point $y\in Y$ we have $s(0_y)=y$. The spray $(E,\pi,s)$ is {\em dominating} if the differential $ds_{0_y}\colon T_{0_y}E \to T_y Y$ maps the vertical subspace $E_y$ of $T_{0_y}E$ surjectively onto $T_y Y$ for every $y\in Y$. A complex manifold $Y$ is said to be {\em elliptic} if it admits a dominating holomorphic spray.
\end{definition}

A dominating spray can be viewed as a family of dominating holomorphic maps $s_y\colon E_y\cong \C^k\to Y$, $s_y(0)=y$, depending holomorphically on the point $y\in Y$.

A spray $(E,\pi,s)$ is algebraic if $\pi\colon E\to Y$ is an algebraic vector bundle over an algebraic manifold $Y$ and $s\colon E\to Y$ is an algebraic map. An algebraic manifold with a dominating algebraic spray is said to be {\em algebraically elliptic}. 

A relaxation of these conditions was introduced in \cite{FF:Subelliptic}. A finite family of sprays $(E_j,\pi_j,s_j)$ on $Y$ $(j=1,\ldots,m)$  is {\em dominating} if for every point $y\in Y$ we have
\begin{equation}
\label{eqn:domination}
    (d s_1)_{0_y}(E_{1,y}) + (d s_2)_{0_y}(E_{2,y})+\cdots
                     + (d s_m)_{0_y}(E_{m,y})= T_y Y.
\end{equation}
A complex manifold $Y$ is {\em subelliptic} if it admits a finite dominating family of sprays. An algebraic manifold $Y$ is {\em algebraically subelliptic} if it admits a finite dominating family of algebraic sprays.

By definition every elliptic manifold is also subelliptic, but the converse is not known in general. In particular, it is not known whether the (algebraically) subelliptic manifolds furnished by Corollary \ref{cor:subel-compl} (in projective spaces and Grassmannians) are all elliptic. The problem is that there is no general procedure for composing a dominating family of sprays into a single dominating spray. However, this can be done on a Stein manifold, and hence a subelliptic Stein manifold is elliptic (see \cite[Lemma 6.3.3, p.\ 246]{FF:book}). 

The following examples of dominating sprays were pointed out by Gromov \cite{Gromov:OP}.

\begin{example}
\label{example:elliptic}
{\rm \cite[Example 5.5.13]{FF:book}}

\noindent (A) {\em Every complex homogeneous manifold is elliptic.}  Indeed, assume that a complex Lie group $G$ acts on a complex manifold $Y$ transitively by holomorphic automorphisms of $Y$. Let $\ggoth\cong\C^p$ denote the Lie algebra of $G$ and
$\exp\colon \ggoth\to G$ the exponential map. The holomorphic map $s\colon Y\! \times\ggoth \cong Y\times\C^p \to Y$,
\[
      s(y,v)= \exp v\,\cdotp y \in Y, \quad  y\in Y,\ v\in \ggoth,
\]
is a dominating holomorphic spray on $Y$.

\smallskip
\noindent (B)
{\em If the tangent bundle of a complex manifold $Y$ is spanned by finitely many $\C$-complete holomorphic vector fields, then $Y$ is elliptic.} Indeed, let $V_1,\ldots,  V_m$ be $\C$-complete holomorphic vector fields on $Y$. Denote by 
$\phi_j^t(y)$ the flow of $V_j$. The map $s\colon Y\times \C^m \to Y$, given by
\begin{equation}
\label{flow-spray}
      s(y,t)= s(y,t_1,\ldots,t_m) =
      \phi_1^{t_1}\circ \phi_2^{t_2}\circ\cdots\circ \phi_m^{t_m}(y),
\end{equation}
satisfies $s(y,0)=y$ and $\frac{\di}{\di t_j}\, s(y,0)=  V_j(y)$ for all $y\in Y$ and $j=1,\ldots,m$. Thus $s$ is dominating at the point $y\in Y$ precisely when the vectors $V_1(y),\ldots,  V_m(y)$ span $T_y Y$. 
\end{example}

\begin{example}[\bf Holomorphically flexible manifolds] 
\label{ex:flexible} 
Example \ref{example:elliptic} (B) initiated a sub\-stan\-tial amo\-unt of contemporary research. In the paper \cite{Arzhantsev2} by Arzhantsev et al., a complex manifold $Y$ is said to be {\em holomorphically flexible at a point} $y\in Y$ if the values at $y$ of $\C$-complete holomorphic vector fields on $Y$ span the tangent space $T_y Y$; the manifold $Y$ is said to be {\em holomorphically flexible} if it is holomorphically flexible at every point. Clearly a connected manifold $Y$ is holomorphically flexible if it is holomorphically flexible at one point $y_0\in Y$ and the holomorphic automorphism group $\mathrm{Aut}Y$ acts transitively on $Y$. In \cite{Arzhantsev2,AKZ} the authors also study the analogous notion of {\em algebraic flexibility} of affine algebraic varieties. A holomorphically flexible Stein manifold is elliptic (\cite[Lemma 4.1]{Kaliman-Kut4},  \cite{Arzhantsev2}), and hence an Oka manifold in view of Corollary \ref{subell-Oka} below. 

For further results and references concerning flexible manifolds we refer to Chap.\ 4 in \cite{FF:book} and to the recent survey paper \cite{Kaliman-Kut4} on Anders\'en-Lempert theory. 
\end{example}

The relevance of subellipticity is shown by the following homotopy version of the Oka-Weil theorem, due (for the class of elliptic manifolds) to Gromov \cite{Gromov:OP}. (See Theorem 6.6.1 in \cite[p.\ 263]{FF:book} for a more general version.) This immediately implies that a subelliptic manifold enjoys CAP, and hence is an Oka manifold (see Corollary \ref{subell-Oka} below).

%
%
%
\begin{theorem}
\label{h-Runge1}               
Assume that $Y$ is a subelliptic manifold. Let $X$ be a Stein space and $K$ a compact $\cO(X)$-convex set in $X$.
Given a homotopy of holomorphic maps $f_t\colon K \to Y$ $(t\in [0,1])$ such that $f_0$ extends to a holomorphic map $f_0\colon X\to Y$, there exists for every $\epsilon>0$ a homotopy of holomorphic maps $\wt f_t\colon X\to Y$ $(t\in [0,1])$ such that $\wt f_0=f_0$ and 
\[
    \sup\left\{ \dist\bigl(\wt f_t(x),f_t(x)\bigr) \colon x\in K,\ t\in [0,1]\right\} < \epsilon.
\]
\end{theorem}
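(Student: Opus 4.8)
The plan is to establish this homotopy Oka--Weil theorem by the spray-based inductive scheme that is standard in the subject, reducing the nonlinear approximation problem to a sequence of linear ones. I treat the family $(f_t)_{t\in[0,1]}$ as a single object that is holomorphic in the $X$-variable and continuous in the parameter $t$, with the extra feature that at $t=0$ it is already globally holomorphic; the goal is then to push this global extension across all of $X$ while controlling the approximation uniformly in $t$.

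First I would build, along each holomorphic map $f$ that arises, a dominating spray. Given a Stein neighborhood $\Omega$ of a compact piece of $X$ and a holomorphic map $f\colon\Omega\to Y$, I pull each spray bundle $E_j$ back by $f$; over the Stein base $\Omega$ each $f^*E_j$ is generated by finitely many global holomorphic sections, so the sprays $s_1,\dots,s_m$ can be composed into a single holomorphic map $F\colon\Omega\times\C^{N}\to Y$ (an iterated spray along $f$) with $F(\cdot,0)=f$. The domination condition \eqref{eqn:domination} for the family $(E_j,\pi_j,s_j)$ translates into surjectivity of the fiber differential $\partial_wF(x,0)\colon\C^{N}\to T_{f(x)}Y$ for every $x\in\Omega$. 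Thus near $f$ the map $F$ is a fiber submersion with $F(\cdot,0)=f$, so any holomorphic map $g$ uniformly close to $f$ can be written as $g(x)=F(x,c(x))$ with $c$ small and holomorphic, by the fiberwise implicit function theorem.

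The analytic core is a gluing lemma over a \SPCP\ $(A,B)$ (compact sets with $A$, $B$, and $A\cap B$ in the standard relative position). Suppose $\hat f$ is holomorphic near $A$ and $\hat g$ holomorphic near $B$, with $\hat g$ uniformly close to $\hat f$ near the overlap. Using a dominating spray $F$ along $\hat f$ I write $\hat g=F(\cdot,c)$ on a neighborhood of $A\cap B$ with $c$ small, then split $c=a-b$ by a \emph{bounded linear splitting operator} for the additive Cousin problem on the Cartan pair (Cartan's lemma with estimates, equivalently a bounded solution operator for $\dibar$), so that $a$ is small and holomorphic near $A$ and $b$ near $B$. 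The map $F(\cdot,a)$ near $A$ and a correspondingly corrected map near $B$ then agree up to a quadratically small error on the overlap, and I remove this error by a rapidly convergent iteration, at each step reusing the same bounded splitting operator with estimates that are uniform along the scheme. I expect this gluing step, and in particular the convergence of the nonlinear iteration with constants independent of the inductive stage, to be the principal obstacle: domination of the spray is exactly what permits the linearization, while boundedness of the Cousin splitting is what makes the iteration converge.

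Finally I would run the induction. Exhaust $X$ by an increasing sequence of compact $\cO(X)$-convex sets $K=K_0\subset K_1\subset\cdots$ with $\bigcup_i K_i=X$, arranged so that each passage $K_i\rightsquigarrow K_{i+1}$ is realized by finitely many convex bumps (Cartan pairs) together with noncritical extensions across the levels of a strictly plurisubharmonic exhaustion of $X$. Partitioning $[0,1]$ finely and carrying the homotopy along, I start from the globally defined $f_0$ (so that $\tilde f_0=f_0$ is kept throughout) and apply the gluing lemma at each bump to extend the family $\tilde f_t$ holomorphically from a neighborhood of $K_i$ to one of $K_{i+1}$, keeping it within a prescribed tolerance $\epsilon_i$ of $f_t$ on $K$ and of the previous stage on $K_{i-1}$, uniformly in $t$. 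Choosing the $\epsilon_i$ so that $\sum_i\epsilon_i<\epsilon$ and $\epsilon_i\to0$ fast enough, the maps $\tilde f_t$ converge uniformly on compacts to a homotopy of holomorphic maps $\tilde f_t\colon X\to Y$ with $\tilde f_0=f_0$ and $\sup\{\dist(\tilde f_t(x),f_t(x))\colon x\in K,\ t\in[0,1]\}<\epsilon$, as required. The parameter $t$ enters only through the demand that all estimates be uniform in $t$, which the construction supplies since $[0,1]$ is compact.
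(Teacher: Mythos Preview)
Your scheme deploys the full Cartan-pair/gluing machinery of \S\ref{ss:Cartanpair}--\S\ref{ss:sprays}, which is what powers the proof of the main Theorem~\ref{th:CAP}. For the present theorem that is both heavier than necessary and, as written, incomplete. The gap: at each bump $B\subset K_{i+1}\setminus K_i$ your gluing lemma requires a holomorphic map $\hat g_t$ on $B$ that is close to $\wt f_t$ on the overlap, and you never say where $\hat g_t$ comes from. In the proof of Theorem~\ref{th:CAP} this second piece is supplied by CAP of the fiber (Lemma~\ref{extending-to-bump}), but here you cannot invoke CAP of $Y$ without circularity, since Corollary~\ref{subell-Oka} (subelliptic $\Rightarrow$ CAP) is deduced from the very theorem you are proving.

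The paper's proof avoids any exhaustion of $X$ by inducting instead over a subdivision $0=t_0<\cdots<t_k=1$ of the \emph{parameter} interval. At stage $j$ the map $\wt f_{t_j}$ is already holomorphic on all of $X$; the dominating spray lifts the nearby maps $f_t$ ($t\in[t_j,t_{j+1}]$) to holomorphic sections $\xi_t$ of the pulled-back \emph{vector bundle} $E|_{\wt f_{t_j}(X)}$ over a neighborhood of $K$. The approximation problem is now \emph{linear}, so the classical parametric Oka--Weil theorem for sections of a holomorphic vector bundle over a Stein space gives global sections $\wt\xi_t$ with $\wt\xi_{t_j}=0$ approximating $\xi_t$ on $K$; composing back through the spray yields the global $\wt f_t$. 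No Cousin splitting, no nonlinear iteration, no bump-by-bump extension. Your first paragraph (pulling back the sprays and linearizing via the implicit function theorem) is exactly the right idea; what you are missing is that once a global base map $\wt f_{t_j}$ is in hand, it supplies the ``second piece'' everywhere on $X$ at once, so the spatial induction becomes superfluous and the step reduces to the linear Oka--Weil theorem.
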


\begin{proof}[Sketch of proof] For simplicity we restrict attention to the case when $Y$ is elliptic. Let $(E,\pi,s)$ be a dominating spray on $Y$. Set $Z=X \times Y$ and denote by $h\colon Z\to X$ the projection onto the first factor; maps $X\to Y$ are then identified with sections $X\to Z$ of $h$. Furthermore, the spray $(E,\pi,s)$ on $Y$ defines a fiber-dominating spray on $Z$ (see \S\ref{ss:ellipticsub} below for this notion), still denoted $(E,\pi,s)$, which is independent of the base variable.

By compactness of the set $\bigcup_{t\in[0,1]} f_t(K)\subset Z$ and the fiber-domination property of the spray we can find an open set $V$ in $X$, with $K\subset V\Subset U$, and numbers $0=t_0<t_1<\cdots<t_k=1$ such that for every $j=0,1,\ldots,k-1$, the homotopy $f_t$ for $t\in [t_j,t_{j+1}]$ lifts to a homotopy of holomorphic sections $\xi_t$ of the restricted bundle $E|_{f_{t_j}(V)}$ satisfying 
\[
    s\circ \xi_t \circ f_{t_j}(x) = f_t(x), \quad x\in V,\ t\in [t_j,t_{j+1}].
\]
In particular, the family $\{\xi_t\}_{t\in [0,t_1]}$ consists of holomorphic sections over $f_0(V)\subset f_0(X)$ of the holomorphic vector bundle $E|_{f_{0}(X)} \to f_{0}(X)$. By the Oka-Weil theorem we can approximate this family, uniformly on a neighborhood of the set $f_0(K)$ in $f_0(X)$, by a homotopy of global holomorphic sections $\wt \xi_t \colon f_0(X)\to E|_{f_0(X)}$ $(t\in [0,t_1])$ such that $\wt \xi_0$ equals the zero section. (We actually use a parametric version of the Oka-Weil theorem which follows from the standard result by applying a continuous partition of unity in the parameter.) The holomorphic sections $\wt f_t := s\circ\tilde \xi_t \circ f_0 \colon X\to Z$ $(t\in [0,t_1])$ are then close to the respective sections $f_t$ near the set $K$. In particular, we may assume that $\wt f_{t_1}(K)$ is contained in a Stein tubular neighborhood of $f_{t_1}(V)$ in $Z$, and hence we can connect these two sections by a homotopy of nearby holomorphic sections over a neighborhood of $K$. We thus get a homotopy of sections $\wt f_t \colon X\to Z$ $(t\in[0,1])$, with $\wt f_t$ close to $f_t$ near $K$, such that $\wt f_0=f_0$, $\wt f_t$ is holomorphic on $X$ for $t\in [0,t_1]$, and it is holomorphic in a neighborhood of $K$ for $t\in (t_1,1]$.

We now use $\wt f_{t_1}\colon X\to Z$ as the new base section and repeat the above procedure on the next interval $[t_1,t_2]$. Assuming as we may that $\wt f_{t_1}$ is sufficiently close to $f_{t_1}$ near $K$, we can lift sections $\wt f_{t}$ for $t\in[t_1,t_2]$ to sections $\wt \xi_t$ of the restricted bundle $E|_{\wt f_{t_1}(X)}$. Applying the Oka-Weil theorem as before we obtain a new homotopy of sections which are holomorphic on $X$ for $t\in [0,t_2]$, and on a neighborhood of $K$ for $t\in [t_2,1]$. The proof is completed after $k$ such steps.
\end{proof}

\begin{corollary}
\label{subell-Oka}
{\rm \cite[Corollary 5.5.12, p.\ 204]{FF:book}}
Every subelliptic manifold (in particular, every elliptic manifold) is an Oka manifold. 
\end{corollary}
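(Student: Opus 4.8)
The plan is to verify that $Y$ enjoys $\CAP$, since by Definition \ref{def:Oka} this is equivalent to $Y$ being an Oka manifold. So let $K\subset\C^n$ be a compact convex set, let $f\colon K\to Y$ be a holomorphic map defined on a neighborhood of $K$, and fix $\epsilon>0$; I must produce an entire map $\C^n\to Y$ approximating $f$ to within $\epsilon$ uniformly on $K$.

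The idea is to deform $f$ to a globally defined (in fact constant) map and then invoke Theorem \ref{h-Runge1} with the Stein space $X=\C^n$. After a translation I may assume that the origin lies in the interior of $K$. Since $K$ is convex and contains $0$, we have $tz\in K$ whenever $z\in K$ and $t\in[0,1]$, so the formula $f_t(z)=f(tz)$ defines a homotopy of holomorphic maps $f_t\colon K\to Y$ $(t\in[0,1])$ with $f_1=f$ and $f_0$ equal to the constant map $z\mapsto f(0)$. In particular $f_0$ extends to an entire map $\C^n\to Y$, a constant map being entire. This is exactly the scaling device already used in the proof of Theorem \ref{CAP:Lie}.

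Two standard facts make Theorem \ref{h-Runge1} applicable: the set $X=\C^n$ is Stein, and a compact convex set $K\subset\C^n$ is polynomially convex, hence $\cO(\C^n)$-convex (polynomials lie in $\cO(\C^n)$, so the $\cO$-convex hull is contained in the polynomial hull). Applying Theorem \ref{h-Runge1} to the homotopy $\{f_t\}$, whose initial map $f_0$ extends holomorphically to all of $\C^n$, yields a homotopy of holomorphic maps $\wt f_t\colon\C^n\to Y$ $(t\in[0,1])$ with $\wt f_0=f_0$ and
\[
    \sup\bigl\{\dist\bigl(\wt f_t(x),f_t(x)\bigr)\colon x\in K,\ t\in[0,1]\bigr\}<\epsilon.
\]
Evaluating at $t=1$ gives an entire map $\wt f_1\colon\C^n\to Y$ with $\dist(\wt f_1(x),f(x))<\epsilon$ for every $x\in K$, which is precisely the approximation demanded by $\CAP$. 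Hence $Y$ is Oka.

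Essentially all of the analytic content of this argument is concentrated in Theorem \ref{h-Runge1}, where the dominating (family of) sprays is used to lift the homotopy into the spray bundles and the parametric Oka--Weil theorem is applied to the lifted sections; the reduction to $\CAP$ carried out here is purely formal. Thus the main obstacle is not in the present deduction but in Theorem \ref{h-Runge1} itself, namely in controlling the lifting of the homotopy across a partition $0=t_0<t_1<\cdots<t_k=1$ so that the sprays remain fiber-dominating on the relevant compact sets at each stage, and in splicing the successive global sections together over a neighborhood of $K$.
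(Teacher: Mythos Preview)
Your proof is correct and follows essentially the same approach as the paper: you verify $\CAP$ by scaling $f$ to the constant map $f_0(z)=f(0)$ via $f_t(z)=f(tz)$ and then invoke Theorem \ref{h-Runge1} on the Stein manifold $\C^n$ with the $\cO(\C^n)$-convex set $K$. The paper's proof is slightly terser but uses exactly the same homotopy and the same appeal to Theorem \ref{h-Runge1}.
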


\begin{proof}
Let $K$ be a compact convex set in $\C^n$ and let $f\colon U\to Y$ be a holomorphic map from an open convex neighborhood $U\subset \C^n$ of $K$. We may assume that $0\in K$. Let $f_t(z)=f(tz)$ for $z\in U$ and $t\in[0,1]$; this is a homotopy from the constant map $f_0(z)=f(0)\in Y$ $(z\in\C^n)$ to the map $f=f_1$. If $Y$ is subelliptic then by Theorem \ref{h-Runge1} $f$ can be approximated uniformly on $K$ by entire maps $\C^n\to Y$, so $Y$ enjoys CAP. 
\end{proof}

Consider $\C^n$ as a subset of $\C\P^n$. A closed analytic subvariety $A\subset \C^n$ is said to be {\em tame} if its closure $\overline A\subset \C\P^n$ does not contain the hyperplane at infinity. A tame complex hypersurface is necessarily algebraic.

\begin{proposition} \label{tame-elliptic}
{\rm \cite[Proposition 5.5.14, p.\ 205]{FF:book}}
If $A\subset \C^n$ is a tame analytic subvariety with $\dim A\le n-2$, then $\C^n\setminus A$ is elliptic; if $A$ is algebraic, then $\C^n\setminus A$ is algebraically elliptic. In particular, every such manifold is Oka. 		
\end{proposition}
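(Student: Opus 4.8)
The plan is to produce, by hand, a dominating spray on $Y=\C^n\setminus A$ and to read it off from a family of complete vector fields, so that Example \ref{example:elliptic}(B) applies. Concretely, it suffices to construct finitely many $\C$-complete holomorphic vector fields $V_1,\dots,V_N$ on $\C^n$, each vanishing on $A$ (hence tangent to $A$, so their flows fix $A$ pointwise and restrict to complete flows on $Y$), whose values span $T_z\C^n=T_zY$ at every $z\in Y$. Composing the flows as in \eqref{flow-spray} then yields a spray $s\colon Y\times\C^N\to Y$ that is dominating exactly because the $V_j$ span at each point; when $A$ is algebraic and the $V_j$ are chosen with polynomial flows, both the bundle and $s$ are algebraic, giving algebraic ellipticity.

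For the building blocks I would use shears. For a direction $v\in\C^n\setminus\{0\}$ write $\pi_v\colon\C^n\to\C^n/\C v\cong\C^{n-1}$ for the linear projection along $v$, and for a function $f$ on $\C^{n-1}$ vanishing on $\pi_v(A)$ set $V_{v,f}=(f\circ\pi_v)\,v$. Since $f\circ\pi_v$ is constant along the $v$-direction, $V_{v,f}$ is complete with explicit flow $z\mapsto z+t\,f(\pi_v(z))\,v$, and it vanishes on $A$ because $f$ vanishes on $\pi_v(A)$. These are the only vector fields I need.

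The pointwise spanning is where the dimension hypothesis and tameness enter. Fix $z_0\in Y$ and project $A$ from $z_0$ to $\C\P^{n-1}$; as $\dim A\le n-2$, the image is a proper subvariety, so the \emph{good} directions $v$ --- those for which the complex line $z_0+\C v$ misses $A$ --- form a Zariski-dense open set of directions, which in particular spans $\C^n$. Tameness lets me further require $\pi_v|_A$ to be proper: the forbidden directions at infinity are $\overline A\cap H_\infty$, a proper subvariety of $H_\infty\cong\C\P^{n-1}$ precisely because $A$ is tame. For a direction $v$ that is both good at $z_0$ and gives a proper projection, $\pi_v(A)$ is a \emph{closed} proper subvariety of $\C^{n-1}$ with $\pi_v(z_0)\notin\pi_v(A)$, so one may choose $f$ vanishing on $\pi_v(A)$ with $f(\pi_v(z_0))\neq 0$; then $V_{v,f}(z_0)=f(\pi_v(z_0))\,v\neq 0$ realises the direction $v$ at $z_0$. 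Running over finitely many such directions that span $\C^n$ gives complete fields spanning $T_{z_0}Y$.

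The main obstacle is to pass from this per-point spanning to a single finite family valid on all of $Y$. In the algebraic case this is clean: the locus where a finite collection fails to span is Zariski-closed in $\C^n$, and adding fields that span at a bad point strictly shrinks it, so Noetherian descending-chain termination forces the bad locus into $A$ after finitely many steps. In the holomorphic case the descending chain condition fails, and this is the delicate point: I would instead observe that the $\mathcal O(\C^n)$-subsheaf generated by \emph{all} the complete shears $V_{v,f}$ is coherent (its stalks are finitely generated over the Noetherian local rings $\mathcal O_z$) and has full rank on the Stein manifold $\C^n$ away from $A$, and then invoke a Forster--Ramspott-type finiteness for coherent sheaves on $\C^n$ to extract a finite subfamily of the $V_{v,f}$ whose values already span $T_zY$ at every $z\in Y$. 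Feeding this finite family into \eqref{flow-spray} completes the construction; tameness is exactly what makes the per-point step possible by guaranteeing proper projections and hence nontrivial $A$-vanishing shears that are nonzero at the prescribed point.
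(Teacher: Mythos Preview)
Your overall strategy is exactly the one the paper uses: reduce to finding finitely many $\C$-complete holomorphic vector fields on $\C^n$ that vanish on $A$ and span $T_z\C^n$ at every $z\in\C^n\setminus A$, then feed them into the flow-spray of Example~\ref{example:elliptic}(B). The paper itself does not carry out this construction; it simply cites Proposition~4.11.7 of \cite{FF:book}. Your shear construction $V_{v,f}=(f\circ\pi_v)\,v$, the use of tameness to guarantee that $\pi_v|_A$ is proper for generic $v$ (so that $\pi_v(A)$ is closed and one can choose $f$ vanishing on it but not at $\pi_v(z_0)$), and the per-point spanning argument via the codimension hypothesis are all correct and are precisely the ingredients behind the cited result. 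Your Noetherian argument for the algebraic case is fine.

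The gap is in the holomorphic finiteness step. Your appeal to a ``Forster--Ramspott-type finiteness'' does not give what you need. Cartan's Theorem~A (or Forster--Ramspott bounds) produces finitely many global sections generating the coherent subsheaf $\mathcal M\subset T\C^n$, but these generators are $\mathcal O(\C^n)$-linear combinations of your shears, and such combinations are in general \emph{not} $\C$-complete, so you cannot use them in \eqref{flow-spray}. Nor can you in general extract a finite \emph{sub}family of the shears themselves that still generates: on a non-Noetherian Stein ring this fails. For instance, on $\C$ with $\mathcal F=\mathcal O$, take $s_j\in\mathcal O(\C)$ vanishing exactly on $\Z\setminus\{j\}$; together the $s_j$ generate $\mathcal O$ (at each integer $n$, $s_n$ is a unit), but any finite subcollection $s_0,\dots,s_N$ has common zeros at $N+1,N+2,\dots$, hence does not generate. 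So the sheaf-theoretic extraction you sketch does not work as stated.

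What does work is a direct finite construction that bypasses this issue: fix finitely many directions $v_1,\dots,v_N$ in general position, all with $\pi_{v_j}|_A$ proper, and for each $j$ take finitely many functions $f_{j,1},\dots,f_{j,m_j}\in\mathcal O(\C^{n-1})$ that generate the ideal of the closed subvariety $\pi_{v_j}(A)$. The resulting finitely many shears $V_{v_j,f_{j,l}}$ realize the direction $v_j$ at every $z\notin A+\C v_j$. The remaining (purely geometric) point is then to choose the directions so that for every $z\notin A$ the set $\{v_j:\ z\notin A+\C v_j\}$ still spans $\C^n$; since each cylinder $A+\C v_j$ has dimension $\le n-1$ and their mutual incidences can be controlled by a generic choice of the $v_j$ relative to $A$, this can be arranged with $N$ bounded in terms of $n$. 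This is essentially how Proposition~4.11.7 in \cite{FF:book} proceeds.
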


\begin{proof}
By Proposition 4.11.7 in \cite[p.\ 141]{FF:book} there exist finitely many $\C$-complete holomorphic vector fields $V_1,\ldots,V_m$ on $\C^n$ that vanish on $A$ and span the tangent space $T_z \C^n$ at every point $z\in \C^n\setminus A$. The associated spray $s\colon \C^n\times\C^m \to \C^n$ (\ref{flow-spray}) satisfies $s(z,t)= z$ for $z\in A$, $s(z,t)\in \C^n\setminus A$ for $z\in \C^n\setminus A$ and $t\in\C^m$, and the restriction $s\colon (\C^n\setminus A)\times\C^m \to \C^n\setminus A$ is a dominating spray over $\C^n\setminus A$. If $A$ is algebraic, then this construction gives an algebraic dominating spray on $\C^n\setminus A$. 
\end{proof}

\begin{corollary} 
\label{p6.3} 
%
%
{\rm \cite[Corollary 5.5.15, p.\ 205]{FF:book}}
Let $X=\C^n/\Gamma$, where $\Gamma$ is a lattice in $\C^n$ $(n\ge 2)$. Then the complement $Y=X\setminus\{x_1,\ldots, x_m\}$ of any finite set of points in $X$ is an Oka manifold. 
\end{corollary}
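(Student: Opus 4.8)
The plan is to lift the problem to the universal cover and reduce it to the Oka property of the complement of a periodic discrete set in $\C^n$. Let $\pi\colon\C^n\to X=\C^n/\Gamma$ be the quotient projection; this is an unramified holomorphic covering, so by Proposition \ref{OP-covering} the manifold $Y=X\setminus\{x_1,\dots,x_m\}$ is Oka if and only if its preimage $\pi^{-1}(Y)=\C^n\setminus A$ is Oka, where
\[
	A=\pi^{-1}(\{x_1,\dots,x_m\})=\bigcup_{j=1}^m(\Gamma+a_j)
\]
is a closed discrete subset of $\C^n$ invariant under the lattice $\Gamma$. Since $n\ge 2$ we have $\dim A=0\le n-2$, so $A$ has codimension $\ge 2$; this is what makes the problem accessible, exactly as in Propositions \ref{complements} and \ref{tame-elliptic}. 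It therefore suffices to verify $\CAP$ for $\C^n\setminus A$.

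The natural approach is to imitate the proof of Proposition \ref{complements}. Given a compact convex set $K\subset\C^p$ and a holomorphic map $g\colon K\to\C^n\setminus A$, one approximates $g$ uniformly on a neighborhood of $K$ by a polynomial map $P\colon\C^p\to\C^n$, and sets $\Sigma=P^{-1}(A)$, which for a generic choice of $P$ is a closed complex subvariety of $\C^p$ of codimension $\ge 2$ that is disjoint from $K$. One then seeks an entire map of Fatou--Bieberbach type $\phi\colon\C^p\to\C^p\setminus\Sigma$ that is uniformly close to the identity on $K$: the composition $P\circ\phi\colon\C^p\to\C^n\setminus A$ approximates $g$ on $K$, and projecting by $\pi$ yields the required entire approximation $\C^p\to Y$. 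I would note that, in contrast to the tame situation of Proposition \ref{tame-elliptic}, one cannot instead build a flow-spray: the constant (translation) field in a direction $\gamma\in\Gamma$ preserves $A$ only at integer times, so it is not $\C$-complete on $\C^n\setminus A$, and a dominating spray on $\C^n\setminus A$, if one is used at all, must be produced by other means.

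The hard part is precisely this avoidance step. In Proposition \ref{complements} the set $A$, and hence $\Sigma=P^{-1}(A)$, is algebraic, and one may invoke the standard construction of a Fatou--Bieberbach map avoiding an algebraic subvariety of codimension $\ge 2$. Here $A$ is a $\Gamma$-periodic transcendental set, so $\Sigma$ is in general neither algebraic nor tame in the sense of \S\ref{ss:elliptic}, and that construction does not apply directly. The crux is to exploit the regular, periodic distribution of $A$ (its $\Gamma$-invariance) in order to produce entire maps $\C^p\to\C^n$ whose images avoid $A$ while remaining close to the given data on $K$. That periodicity is essential, and not a mere convenience, is shown by the Rosay--Rudin example quoted after Proposition \ref{complements}: there are discrete subsets of $\C^n$ whose complement is volume hyperbolic, hence not Oka, so no argument using only $\codim A\ge 2$ can work. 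Once the avoidance is carried out using the $\Gamma$-invariance of $A$, the complement $\C^n\setminus A$ satisfies $\CAP$ (equivalently, one exhibits a dominating spray and appeals to Corollary \ref{subell-Oka}), and Definition \ref{def:Oka} together with Proposition \ref{OP-covering} then finishes the proof.
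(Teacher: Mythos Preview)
Your reduction is exactly right: lift to the universal cover, so that $Y$ is Oka if and only if $\C^n\setminus A$ is, where $A=\bigcup_{j=1}^m(\Gamma+q_j)$. But then you stop precisely where the proof has to begin. You acknowledge that the avoidance step is ``the hard part'', you note that Proposition~\ref{complements} does not apply because $A$ is not algebraic, and you say the crux is to ``exploit the regular, periodic distribution of $A$''---but you never actually do so. As written, this is a proof outline with the one nontrivial step missing.

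The paper fills the gap with a single external input: the discrete $\Gamma$-periodic set $A$ is \emph{tame} (Buzzard \cite{Buzzard}; see also \cite[Proposition~4.1]{Buzzard-Lu}). Once that is known, Proposition~\ref{tame-elliptic} applies verbatim and gives that $\C^n\setminus A$ is elliptic, hence Oka by Corollary~\ref{subell-Oka}; then Proposition~\ref{OP-covering} finishes the argument, just as you say. So the Fatou--Bieberbach detour through $\Sigma=P^{-1}(A)$ is unnecessary: one works directly with $A$, not with its polynomial preimages.

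Your dismissal of the flow-spray approach is also based on a misconception. You argue that the translation field in a lattice direction $\gamma$ is not $\C$-complete on $\C^n\setminus A$. True, but irrelevant: the vector fields furnished by Proposition~\ref{tame-elliptic} (via \cite[Proposition~4.11.7]{FF:book}) are not constant translation fields. They are shear-type fields that \emph{vanish on $A$} and therefore leave $A$ invariant for all complex times; the tameness of $A$ is exactly what guarantees that enough such fields exist to span $T_z\C^n$ at every $z\notin A$. This is where the $\Gamma$-periodicity is genuinely used---in Buzzard's tameness theorem---rather than in an ad~hoc avoidance construction.
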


\begin{proof}
Let $\pi\colon \C^n\to X=\C^n/\Gamma$ denote the quotient projection. Choose points $q_j\in \C^n$ such that $\pi(q_j)=x_j$ for $j=1,\ldots,m$. The discrete set $\Gamma_0 = \cup_{j=1}^m(\Gamma + q_j)\subset \C^n$ is tame (\cite{Buzzard}, \cite[Proposition 4.1]{Buzzard-Lu}), so $\C^p\setminus \Gamma_0$ is elliptic (and hence Oka) by Proposition \ref{tame-elliptic}. Since $\pi\colon \C^p\setminus \Gamma_0\to Y$ is an unramified covering, $Y$ is Oka by Proposition  \ref{OP-covering}.
\end{proof}

An interesting feature of algebraic subellipticity is that it can be localized as follows.
No comparable result is known in the holomorphic category.

%
%
\begin{proposition}
\label{localization-subell}
{\rm \cite[Proposition 6.4.2, p.\ 251]{FF:book}}
If $Y$ is a quasi-projective algebraic manifold such that each point $y_0\in Y$ has a Zariski open neighborhood $U\subset Y$
and algebraic sprays $s_j\colon E_j\to Y$ $(j=1,2,\ldots,m)$, defined on algebraic vector bundles $p_j\colon E_j\to U$ and satisfying the domination property (\ref{eqn:domination}) 
for every point $y\in U$, then $Y$ is algebraically subelliptic.   
\end{proposition}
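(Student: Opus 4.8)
The plan is to first reduce the global statement to a \emph{single} localization-and-gluing step, and then to carry out that step by a damping (``taming'') construction. Since $Y$ is quasi-projective, it is Noetherian, hence quasi-compact in the Zariski topology; so from the hypothesis I extract finitely many Zariski open sets $U_1,\dots,U_n$ covering $Y$, where over each $U_\ell$ there is a finite family of local algebraic sprays $s_{\ell,1},\dots,s_{\ell,m_\ell}$ satisfying the domination property (\ref{eqn:domination}) at every point of $U_\ell$. The goal is to replace each local spray $s_{\ell,j}$ (defined on a bundle over $U_\ell$ only) by finitely many \emph{global} algebraic sprays on $Y$ that dominate at least as much as $s_{\ell,j}$ over $U_\ell$; the union of all of these, over all $\ell,j$, is then a finite dominating family of global sprays, because every $y\in Y$ lies in some $U_\ell$ over which the original $s_{\ell,\cdot}$ already spanned $T_yY$. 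Throughout, the only thing that matters is the image of the vertical differential $(ds)_{0_y}(E_y)\subseteq T_yY$, so I am free to precompose a spray with any fiberwise surjective bundle map without decreasing domination.

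The heart of the argument is the globalization of one local spray $s:=s_{\ell,j}$ over $U:=U_\ell$, with complement $Z:=Y\setminus U$. First, composing $s$ with a fiberwise surjective morphism from a twisted trivial bundle (every algebraic vector bundle over the quasi-projective $U$ is a quotient of a direct sum of negative powers of an ample line bundle $L$), I may assume the spray has the form $s\colon E|_U\to Y$ with $E=(L^{-d_0})^{\oplus N}$. Next I choose sections $\sigma_1,\dots,\sigma_r\in H^0(Y,L^{\otimes k})$ whose common zero locus is exactly $Z$ (possible for $k\gg 0$, since by Serre's theorem the ideal sheaf of $Z$ twisted by a high power of $L$ is globally generated, after passing to the projective closure of $Y$ and restricting). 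For each $i$ and a large integer $d$ I define a candidate global spray on $(L^{-kd-d_0})^{\oplus N}$ by the damping formula $\tilde s_i(y,v)=s\bigl(y,\sigma_i(y)^{d}\,v\bigr)$ for $y\in U$, where $\sigma_i(y)^d\in L^{kd}_y$ contracts $v\in\bigl((L^{-kd-d_0})^{\oplus N}\bigr)_y$ into the fiber $E_y$ of the original spray bundle. At the zero section this gives $\tilde s_i(0_y)=s(0_y)=y$, so $\tilde s_i$ is a spray; and since $\sigma_i(y)^d\ne0$ for $y\in U\setminus\{\sigma_i=0\}$, the vertical differential of $\tilde s_i$ at $0_y$ differs from that of $s$ by an invertible scaling and hence has the same image there. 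As $\{\sigma_1=\cdots=\sigma_r=0\}=Z$, for every $y\in U$ at least one $\sigma_i(y)\ne0$, so the family $\tilde s_1,\dots,\tilde s_r$ recovers the full domination of $s$ over all of $U$.

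The main obstacle is purely the claim that, for $d$ large, the damped map $\tilde s_i(y,v)=s\bigl(y,\sigma_i(y)^d v\bigr)$ extends to an everywhere-regular algebraic morphism $(L^{-kd-d_0})^{\oplus N}\to Y$ across $Z$; granting this, $Y$ is algebraically subelliptic by the counting just described. To verify the extension I would fix a projective embedding $Y\hra\P^M$ and write $s$ in homogeneous coordinates as a rational map that is regular on $U$ and whose coefficients, as functions of $y$, are polynomials in the fiber variable of bounded degree with poles along $Z$ of bounded order. Substituting the fiber variable $\sigma_i(y)^d v$ multiplies each nonconstant monomial by a positive power of $\sigma_i$, which vanishes on $Z$, while the constant term is just $y$ and is already regular; choosing $d$ larger than the maximal pole order clears all denominators simultaneously. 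Keeping the degree bound uniform as $y$ ranges over $U$ --- which is where quasi-compactness and the finiteness of the data are used --- is the one point requiring genuine care, and it is the crux of the whole proposition.
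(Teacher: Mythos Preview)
The paper does not give a proof of this proposition; it merely cites \cite[Proposition~6.4.2]{FF:book}.  Your argument is essentially the standard one found there (and going back to Gromov \cite[3.5.B--C]{Gromov:OP}): reduce to finitely many Zariski open sets by quasi-compactness, present each local spray bundle as a quotient of a sum of negative twists of an ample line bundle so that the bundle itself is globally defined, and then damp the spray by high powers of sections cutting out the complement $Z=Y\setminus U$ so that the damped map extends regularly across $Z$ while retaining the same vertical-derivative image over $U$.

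Your outline is correct.  The one place that deserves more care is exactly the one you flag: the extension of $\tilde s_i$ across $Z$.  The cleanest way to make this precise is to pass to a projective closure $\bar Y\subset\P^M$ and view $s$ as a rational map from the total space of the (now globally defined) bundle $(L^{-d_0})^{\oplus N}$ to $\bar Y$, regular over $U$.  On the zero section the map is the identity $y\mapsto y$, which is already regular on all of $Y$.  The indeterminacy locus is a closed subscheme supported in the fibers over $Z$; multiplying the fiber coordinate by $\sigma_i^{d}$ contracts a neighborhood of the zero section over $\{\sigma_i=0\}$ into the regular locus, and for $d$ exceeding the (finite) pole order of the finitely many rational functions defining $s$ in charts, the composition becomes regular.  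Your heuristic of ``clearing denominators'' is exactly this, and the uniform bound on pole orders follows because $s$ is a single morphism, hence described by finitely many regular functions on the quasi-compact $E|_U$.
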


\begin{corollary}
\label{cor:subel-compl}
If $A$ is an algebraic subvariety of codimension $>1$ in a projective space or a complex Grassmannian $X$, then $X\setminus A$ is algebraically subelliptic.
\end{corollary}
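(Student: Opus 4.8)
The plan is to reduce the statement to the affine case already handled by Proposition \ref{tame-elliptic} and then assemble the local data through the localization principle for algebraic subellipticity, Proposition \ref{localization-subell}. Set $Y := X\setminus A$, which is a smooth quasi-projective manifold since $X$ (a projective space or a Grassmannian) is smooth and $A$ is closed. Both types of $X$ of dimension $N$ carry a finite covering by Zariski open affine charts $V_i\cong\C^N$: the standard charts $\{z_i\ne 0\}$ for $\C\P^N$, and the big cells for a Grassmannian. Since the $V_i$ cover $X$, the sets $U_i:=V_i\cap Y=V_i\setminus A$ are Zariski open in $Y$ and cover $Y$. It therefore suffices to produce, over each $U_i$, a dominating algebraic spray whose spray map lands in $Y$, so that the hypotheses of Proposition \ref{localization-subell} hold at every point of $Y$.

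The key step, and the one I expect to require the most care, is to check that in each chart the algebraic subvariety $A_i:=A\cap V_i\subset V_i\cong\C^N$ is \emph{tame} in the sense preceding Proposition \ref{tame-elliptic}. This is exactly where the codimension hypothesis is used. Identifying $V_i$ with $\C^N\subset\C\P^N$ via the chart coordinates, the projective closure $\overline{A_i}\subset\C\P^N$ has the same dimension as $A_i$, namely $\dim A_i\le N-2$ (because $\codim_X A>1$ forces $\dim A\le N-2$, and $A_i$ is open in $A$), whereas the hyperplane at infinity $\{z_0=0\}\cong\C\P^{N-1}$ has dimension $N-1$. By this dimension count $\overline{A_i}$ cannot contain the hyperplane at infinity, so $A_i$ is tame; if some $A_i$ is empty the chart contributes a trivial dominating spray by translations, so that case is immediate. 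Note that tameness here is an intrinsic property of $A_i$ inside the chart $\C^N$ and is insensitive to how $V_i$ sits inside the Grassmannian, so the projective-space and Grassmannian cases are handled uniformly.

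With tameness established, Proposition \ref{tame-elliptic} furnishes finitely many $\C$-complete algebraic vector fields on $\C^N$ that vanish on $A_i$ and span the tangent space at every point of $U_i=\C^N\setminus A_i$; the associated flow-spray (\ref{flow-spray}) restricts to a dominating algebraic spray $s_i\colon U_i\times\C^{m_i}\to U_i\subset Y$ defined on the trivial bundle over $U_i$. Because $U_i$ is open in $Y$, one has $T_yU_i=T_yY$ for $y\in U_i$, so the differential of $s_i$ along the zero section surjects onto $T_yY$; thus the domination property (\ref{eqn:domination}) holds (with a single spray) over the Zariski open neighborhood $U_i$ of any $y_0\in U_i$. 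Applying Proposition \ref{localization-subell} to the manifold $Y$ with these local sprays then yields that $Y=X\setminus A$ is algebraically subelliptic. The only genuinely nonformal ingredient is the tameness verification, and the dimension count above shows that the codimension $>1$ hypothesis makes it automatic; the existence of affine coverings and the final assembly are routine once the local sprays are in hand.
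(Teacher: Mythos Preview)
Your proof is correct and follows exactly the approach the paper intends: the corollary is placed immediately after Proposition~\ref{localization-subell} precisely because it is meant to be derived by covering $X$ by standard affine charts $V_i\cong\C^N$, applying Proposition~\ref{tame-elliptic} to each $A_i=A\cap V_i$ (whose tameness is forced by the dimension count you give), and then invoking the localization principle. There is nothing to add.
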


If $\wh Y\to Y$ is a holomorphic covering map of complex manifolds and $Y$ is elliptic, then it is easily seen that $\wh Y$ is elliptic.  It is not known whether ellipticity of $\wh Y$ implies ellipticity of $Y$ in general (compare with Proposition \ref{OP-covering} for Oka manifolds), but the following special case was proved recently by T.\ Ritter \cite{Ritter2}. 

\begin{theorem}
\label{thm_affinequotients}
If\ $\,\Gamma \subset \mathrm{Aff}(\C^n)$ is a discrete group of affine automorphisms of $\C^n$ acting freely and properly discontinuously on $\C^n$, then the quotient $\C^n/\Gamma$ is elliptic.
\end{theorem}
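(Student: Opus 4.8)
The plan is to construct a dominating holomorphic spray directly on $X=\C^n/\Gamma$ by descending the natural translation spray of $\C^n$. Viewing $(\C^n,+)$ as an abelian complex Lie group acting on itself, Example \ref{example:elliptic}(A) furnishes the dominating spray $s\colon \C^n\times\C^n\to\C^n$, $s(z,v)=z+v$, on the trivial bundle $E=\C^n\times\C^n=T\C^n$; indeed $s(0_z)=z$ and $(ds)_{0_z}$ restricted to the fibre $E_z$ is the identity, so it dominates. The whole task is therefore to show that this spray is $\Gamma$-equivariant for a suitable $\Gamma$-action on $E$, and hence descends to a dominating spray on the quotient.

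To set this up, write each $\gamma\in\Gamma$ as $\gamma(z)=L(\gamma)z+t(\gamma)$ with linear part $L(\gamma)\in GL_n(\C)$ and translation part $t(\gamma)\in\C^n$. First I would equip $E=\C^n\times\C^n$ with the action $\gamma\cdot(z,v)=(\gamma z,\,L(\gamma)v)$. This is precisely the natural $\Gamma$-action on the holomorphic tangent bundle $T\C^n$ (the differential of an affine map is its linear part $L(\gamma)$), so $E$ is a $\Gamma$-equivariant vector bundle whose quotient is the tangent bundle $TX$ of $X$. The key computation is then the equivariance of $s$: for all $z,v\in\C^n$,
\[
	s(\gamma\cdot(z,v))=s\bigl(L(\gamma)z+t(\gamma),\,L(\gamma)v\bigr)=L(\gamma)(z+v)+t(\gamma)=\gamma\bigl(s(z,v)\bigr).
\]

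Since $s$ intertwines the $\Gamma$-actions on $E$ and on $\C^n$, it descends to a holomorphic map $\bar s\colon TX\to X$. Because the covering projection $\pi\colon\C^n\to X$ (which exists as $\Gamma$ acts freely and properly discontinuously) is a local biholomorphism, both the spray condition $\bar s(0_x)=x$ and the domination of $(d\bar s)_{0_x}$ on the fibre are inherited from those for $s$. Thus $(TX,\pi,\bar s)$ is a dominating holomorphic spray and $X$ is elliptic.

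The conceptual crux, and the reason the result is not entirely automatic, is that one cannot simply descend complete vector fields through Example \ref{example:elliptic}(B): a constant field $z\mapsto v$ is $\Gamma$-invariant only if $L(\gamma)v=v$ for every $\gamma$, so the cocycle twisting generally prevents a spanning family of $\Gamma$-invariant complete fields from existing. What makes the argument work is that the spray as a whole can be made equivariant even when no invariant spanning fields are available, because the fibre coordinate $v$ is allowed to transform by $L(\gamma)$ in lockstep with the base. Verifying this equivariance, i.e.\ recognizing that affine maps are exactly those compatible with the additive structure underlying the translation spray, is the only real point; the remaining checks are formal.
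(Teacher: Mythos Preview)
Your proof is correct and follows essentially the same approach as the paper: both use the translation spray $s(z,v)=z+v$ on the trivial bundle $\C^n\times\C^n$, verify its $\Gamma$-equivariance for the action $\gamma\cdot(z,v)=(\gamma z,\,L(\gamma)v)$ (the paper writes the same equivariance as $\sigma_{\gamma(z)}\circ\lambda_\gamma\circ\sigma_z^{-1}=\gamma$ with $\lambda_\gamma=L(\gamma)$), and conclude that the spray descends to a dominating spray on the tangent bundle of $\C^n/\Gamma$. Your additional remark explaining why Example~\ref{example:elliptic}(B) does not directly apply is a nice clarification not present in the paper's proof.
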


\begin{proof}
Each $\gamma \in \Gamma$ is of the form $\gamma(z) = Az + b$ where $A \in GL_n(\C)$ and $b\in \C^n$. Define $\lambda_\gamma = A$. The map $\sigma_z(w) = z + w$ is a dominating spray on $\C^n$, defined on the trivial bundle $\C^n\times\C^n$ over $\C^n$. A calculation gives
\begin{eqnarray*}
	\sigma_{\gamma(z)} \circ \lambda_\gamma \circ \sigma_z^{-1}(w) &=& \sigma_{\gamma(z)} \circ \lambda_\gamma(-z + w) = \sigma_{\gamma(z)}(-Az + Aw) \\&=& -Az + Aw + Az + b = \gamma(w)
\end{eqnarray*}
for every $w\in\C^n$; hence $\sigma_{\gamma(z)} \circ \lambda_\gamma \circ \sigma_z^{-1}= \gamma$ for every $z\in\C^n$. This condition means that the spray $\sigma_z(w)$ on $\C^n$ is $\Gamma$-equivariant. Hence $\sigma$ descends to the quotient
of the trivial bundle (which is the holomorphic tangent bundle of the quotient manifold $\C^n/\Gamma$) and induces a dominating spray on $\C^n/\Gamma$.
\end{proof}

\subsection{Ball complements}
\label{ss:ball}
By the discussion in the previous subsection we have the following inclusions:
\[
	\{\rm elliptic\ manifolds\}\subset \{\rm subelliptic\ manifolds\}\subset \{\rm Oka\ manifolds\}.
\]
It follows directly from the respective definitions that we also have inclusions
\[
	\{\rm Oka\ manifolds\} \subset \{\rm strongly\ dominable\ manifolds\} \subset \{\rm dominable\ manifolds\}.
\]
For the notion of a (strongly) dominable manifold see \S \ref{ss:flex}.

\begin{problem}
\label{rem:Oka-elliptic}
Which of the above inclusions are proper? In particular, is every Oka manifold subelliptic? Is every subelliptic manifold also elliptic?
\end{problem}

It has recently been  shown by Andrist and Wold \cite{Andrist-Wold} that for every $n\ge 3$ the complement
$Y=\C^n\setminus\overline \B$ of the closed ball in $\C^n$ fails to be subelliptic. The main reason is
that every holomorphic vector bundle on $Y$ extends as a holomorphic vector bundle across
most of the boundary points of the ball (indeed, by Siu \cite{Siu1974}
it extends as a coherent analytic sheaf to all of $\C^n$), and hence any spray
also extends to a neighborhood of some point $p\in b\B$ in view of Hartogs' theorem.  
The analysis of the subellipticity condition in a neighborhood of any
such point easily leads to a contradiction. 

On the other hand, $\C^n\setminus \overline \B$ is a union of Fatou-Bieberbach domains \cite{Rosay-Rudin}, 
hence is strongly dominable. This implies the following corollary.

\begin{corollary} {\rm \cite{Andrist-Wold}}  
\label{cor:incl}
At least one of the following inclusions is proper:
\[
	\{\rm subelliptic\ manifolds\}\subset \{\rm Oka\ manifolds\} 
	\subset \{\rm strongly\ dominable\ manifolds\}.
\]
\end{corollary}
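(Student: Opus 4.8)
The plan is to avoid analyzing the two inclusions separately and instead extract both from a single well-chosen example together with a case distinction. The example is the ball complement $Y=\C^n\setminus\overline\B$ for some fixed $n\geq 3$, for which two properties have already been recorded above: by the theorem of Andrist and Wold, $Y$ is not subelliptic, while by the Rosay--Rudin description of $\C^n\setminus\overline\B$ as a union of Fatou--Bieberbach domains, $Y$ is strongly dominable. Thus $Y$ sits on the Oka threshold in an a priori undetermined way: it lies strictly outside the subelliptic class, yet inside the strongly dominable class.

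The argument then proceeds by asking whether or not $Y$ is an Oka manifold, splitting into the two logically exhaustive cases. First, suppose $Y$ is Oka. Then $Y$ is an Oka manifold that fails to be subelliptic, exhibiting a member of $\{\mathrm{Oka}\}\setminus\{\mathrm{subelliptic}\}$; hence the inclusion $\{\mathrm{subelliptic\ manifolds}\}\subset\{\mathrm{Oka\ manifolds}\}$ is proper. Second, suppose $Y$ is not Oka. Then $Y$ is strongly dominable but not Oka, exhibiting a member of $\{\mathrm{strongly\ dominable}\}\setminus\{\mathrm{Oka}\}$; hence the inclusion $\{\mathrm{Oka\ manifolds}\}\subset\{\mathrm{strongly\ dominable\ manifolds}\}$ is proper. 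Since these two cases are exhaustive, at least one of the two inclusions is proper, which is exactly the assertion.

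The subtle point --- and in a sense the whole content of the corollary --- is that this dichotomy settles the disjunction without determining which of the two alternatives actually holds. The obstacle to sharpening the conclusion is precisely the open problem of whether $\C^n\setminus\overline\B$ is itself an Oka manifold: resolving that would pin down which inclusion is strict, but the statement as formulated is designed exactly to sidestep this open question. I therefore expect no genuine difficulty in the proof itself; the entire weight rests on the two previously cited facts about $Y$, and the remaining reasoning is a purely formal case split.
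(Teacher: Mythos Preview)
Your proof is correct and matches the paper's reasoning exactly: the paper simply records that $\C^n\setminus\overline\B$ (for $n\ge 3$) is not subelliptic by Andrist--Wold and is strongly dominable as a union of Fatou--Bieberbach domains, and states that this implies the corollary without writing out the case split you have made explicit. The paper then immediately poses the open problem of whether $\C^n\setminus\overline\B$ is Oka, confirming your observation that the dichotomy settles the disjunction without resolving which inclusion is actually strict.
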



\begin{problem}
\label{pr:ballcompl}
Let $n>1$. Is $\C^n\setminus \overline \B$ an Oka manifold?
\end{problem}

Partial positive results in in this direction have recently been found by Forstneri\v c and Ritter \cite{Forstneric-Ritter} who in particular proved the following.

\begin{theorem}
\label{th:FRitter}
If $L$ is a compact convex set in $\C^n$ for some $n>1$ then the complement $Y=\C^n\setminus L$ 
satisfies the basic Oka property with approximation and interpolation (BOPAI) for maps
from Stein manifolds of dimension $<n$ to $Y$.    
\end{theorem}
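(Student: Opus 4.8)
The plan is to reduce the claim to a dimension-restricted convex approximation property and then to verify the latter by a Fatou--Bieberbach chart argument, in which the hypothesis $\dim X<n$ is decisive. The proof of Theorem \ref{th:CAP} derives all Oka properties, BOPAI included, from $\CAP$ by an induction over a strictly plurisubharmonic exhaustion of the Stein source $X$; when $X$ has dimension $d$, the convex bumps and handles attached along the way are modelled on convex sets in $\C^d$, so that only $\CAP_m$ with $m\le d$ is invoked. Hence, to obtain BOPAI for every Stein source of dimension $<n$, it suffices to show that $Y=\C^n\setminus L$ enjoys $\CAP_m$ for each $m\le n-1$; and by the reduction recorded after Definition \ref{def:CAP}, this means that every holomorphic map $f\colon K\to Y$ from a neighbourhood of a compact convex set $K\subset\C^m$ is a uniform limit on $K$ of entire maps $\C^m\to Y$.

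Fix such an $f$ and put $C=f(K)$, a compact subset of $Y$. The key step is the following chart trick: suppose we can produce a Fatou--Bieberbach domain $\Omega$, i.e.\ the image of an injective entire map $\psi\colon\C^n\to\C^n$ with $\psi\colon\C^n\bihol\Omega$, satisfying $C\subset\Omega\subset Y$. Then $\psi^{-1}\circ f\colon K\to\C^n$ is holomorphic, so by the Oka--Weil theorem (applied to each component over the polynomially convex set $K$) there is an entire map $G\colon\C^m\to\C^n$ approximating $\psi^{-1}\circ f$ uniformly on $K$. The composite $\wt f=\psi\circ G\colon\C^m\to\C^n$ is entire, has image in $\psi(\C^n)=\Omega\subset Y$, and satisfies $\wt f\approx\psi\circ(\psi^{-1}\circ f)=f$ on $K$. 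This proves $\CAP_m$, modulo the construction of $\Omega$.

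The construction of $\Omega$ is the geometric heart, and it is here that the dimension enters. The image $C=f(K)$ lies in $f(U)$ for a neighbourhood $U\subset\C^m$ of $K$, a set of real dimension at most $2m\le 2n-2$, strictly below $\dim_\R\partial L=2n-1$. Since $Y\simeq S^{2n-1}$, a compact set of real dimension $\le 2n-2$ carries no $(2n-1)$-dimensional homology and is homologically trivial in $Y$; this is exactly the threshold that removes the obstruction to separating $C$ from the convex obstacle $L$ by a domain biholomorphic to $\C^n$. I would build $\Omega$ by Anders\'en--Lempert theory, in the spirit of the Rosay--Rudin presentation of $\C^n\setminus\overline{\B}$ as a union of Fatou--Bieberbach domains \cite{Rosay-Rudin}: construct a sequence of holomorphic automorphisms of $\C^n$ whose attracting basin contains $C$ while keeping $L$ in its complement, using the low real dimension of $C$ to run a holomorphic isotopy that draws $C$ into the basin without sweeping it across $L$.

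The decisive and essentially only nontrivial point is thus this enclosure of the prescribed compact $C$ in a Fatou--Bieberbach domain contained in $\C^n\setminus L$, for which both the convexity of $L$ and the bound $\dim X<n$ are indispensable. That the dimension bound cannot be dropped is visible already for the ball: the sphere $\{|z|=R\}\subset\C^n\setminus\overline{\B}$ $(R>1)$ has real dimension $2n-1$ and generates $H_{2n-1}(\C^n\setminus\overline{\B})$, so it bounds in no Fatou--Bieberbach subdomain; this obstruction mirrors the fact that the full Oka property of $\C^n\setminus\overline{\B}$ is still open (Problem \ref{pr:ballcompl}). Once $\CAP_m$ is established for all $m\le n-1$, both the approximation and the interpolation parts of BOPAI follow for sources of dimension $<n$ from the proof scheme of Theorem \ref{th:CAP}, the interpolation along a closed subvariety $X'$ being absorbed through the usual Cartan extension step.
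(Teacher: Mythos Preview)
The paper does not prove this theorem; it merely states it and cites \cite{Forstneric-Ritter}. So there is no proof here to compare against. That said, your proposal has a genuine gap in the reduction step.

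You claim that the proof of Theorem~\ref{th:CAP} ``only invokes $\CAP_m$ with $m\le d$'' when the Stein source $X$ has dimension $d$. This is not what happens. Look at Step~2 of Lemma~\ref{extending-to-bump}: CAP is applied to the spray $F'$ on the convex set $\phi(A\cap B)\times Q\subset\C^{d+N}$, where $d=\dim X$ and $N$ is the dimension of the spray parameter space. By Lemma~\ref{local-spray} one can take $N=\dim Y=n$, but not smaller in general, since the spray must dominate the vertical tangent bundle. Thus the standard machinery requires $\CAP_{d+n}$, not $\CAP_d$. Establishing $\CAP_m$ for $m\le n-1$ therefore does \emph{not} let you run the bump-extension scheme as written. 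Worse, if your Fatou--Bieberbach argument really proved $\CAP_m$ for all $m$ up to the dimension you need (namely $2n-1$), you would be proving that $\C^n\setminus L$ is Oka, which is precisely the open Problem~\ref{pr:ballcompl}.

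Your underlying idea---enclose the relevant compact subset of $Y$ in a Fatou--Bieberbach domain $\Omega\subset\C^n\setminus L$ and then approximate in the chart $\psi^{-1}$---is indeed the heart of the matter. But it must be applied not to the core image $f(K)$ alone; rather, one has to handle the full spray image $F'(\phi(A\cap B)\times Q)$, which is an open (hence $n$-dimensional) neighbourhood of the core image in $Y$. The saving feature is that this thick image is isotopic in $Y$, via the linear contraction of the spray parameter, to the core image, and the latter has real dimension at most $2d\le 2n-2$. The nontrivial input from \cite{Forstneric-Ritter} is a theorem producing a Fatou--Bieberbach domain in $\C^n\setminus L$ around any compact set that admits such an isotopy to a low-dimensional (or polynomially convex) set avoiding $L$; this is where convexity of $L$ and the Anders\'en--Lempert machinery enter, and it is considerably more delicate than ``draw $C$ into a basin without sweeping across $L$.'' Note also that basins of attraction are Runge in $\C^n$, so any $C\subset\Omega$ forces $\widehat C\subset\Omega\subset Y$; you have not checked that the polynomial hull of your image avoids $L$, and for the full spray image this is again where the isotopy hypothesis is needed.

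In short: right destination, but the shortcut through ``$\CAP_{n-1}$ suffices'' is a wrong turn. You need to intervene directly in the bump-extension step and feed the Fatou--Bieberbach chart the spray, not just its core.
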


\subsection{Good manifolds}
\label{ss:good}
As was said above, it is not known whether every Oka manifold is elliptic or 
subelliptic. We mention a few existing partial results in this direction.
The following was proved by Gromov \cite{Gromov:OP} 
(see also \cite[Proposition 5.15.2, p.\ 237]{FF:book}). 

\begin{proposition}
\label{prop:SteinOkaelliptic}
{\rm \cite{Gromov:OP}}  Every Stein Oka manifold is elliptic.
\end{proposition}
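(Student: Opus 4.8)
The plan is to exhibit a dominating holomorphic spray on a Stein manifold $Y$ that satisfies $\CAP$ (equivalently, is Oka), using the abundance of holomorphic maps guaranteed by the Oka property together with the abundance of holomorphic functions guaranteed by Steinness. The natural candidate for the spray bundle is the holomorphic tangent bundle $\pi\colon E=TY\to Y$, and the idea is to build a spray map $s\colon TY\to Y$ with $s(0_y)=y$ and $(ds)_{0_y}|_{E_y}=\Id_{T_yY}$, which is automatically dominating.

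First I would embed $Y$ as a closed complex submanifold of some $\C^N$; this is possible because $Y$ is Stein (the Remmert--Bishop--Narasimhan embedding theorem). Via this embedding, $TY$ becomes a holomorphic subbundle of the trivial bundle $Y\times\C^N$, so a tangent vector $v\in T_yY$ is realized as a genuine vector in $\C^N$ based at $y\in\C^N$. A first naive attempt at a spray is $(y,v)\mapsto y+v$, but its image leaves $Y$; the correct strategy is to compose with a holomorphic retraction. Since $Y$ is Stein and smooth, by Docquier--Grauert there is an open neighborhood $\Omega$ of $Y$ in $\C^N$ and a holomorphic retraction $\rho\colon \Omega\to Y$. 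Setting $s(y,v)=\rho(y+v)$ for $(y,v)$ with $y+v\in\Omega$ gives a spray defined on a neighborhood of the zero section of $TY$; it satisfies $s(0_y)=\rho(y)=y$, and differentiating $\rho|_Y=\Id$ shows $(ds)_{0_y}$ restricted to the fiber $E_y=T_yY$ is the identity, hence dominating.

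The main obstacle is that this construction only yields a \emph{local} spray, defined near the zero section, whereas Definition \ref{def:spray} requires $s$ to be defined on all of the spray bundle $E=TY$. This is exactly where the Oka property of $Y$ is needed: the local spray must be globalized. I would formulate the globalization as a section-extension problem. Consider the submersion obtained from $TY$ and seek to extend the fiberwise-defined spray from a neighborhood of the zero section to the whole bundle. The key observation is that the total space $TY$ is itself Stein (it is a holomorphic vector bundle over a Stein manifold), and the target $Y$ is Oka; the partially defined spray map is a holomorphic map from an open subset of the Stein manifold $TY$ into the Oka manifold $Y$, agreeing with the projection on the zero section. Using the Oka principle for maps from Stein manifolds into $Y$ (Theorem \ref{th:CAP}), together with interpolation along the zero section to preserve the normalization $s(0_y)=y$ and its first-order behavior, one extends the local spray to a global holomorphic spray map $s\colon TY\to Y$, which remains dominating since domination is a condition only at the zero section. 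This globalization step, reconciling the local geometric construction with the global extension furnished by the Oka property, is the heart of the argument.
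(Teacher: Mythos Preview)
Your proposal is correct and follows essentially the same route as the paper: take $E=TY$, build a local holomorphic spray near the zero section via a holomorphic retraction (the paper uses Grauert's tubular neighborhood theorem directly inside $TY$, while you obtain it from a Docquier--Grauert retraction after embedding $Y\hookrightarrow\C^N$, which amounts to the same thing), then globalize using that $TY$ is Stein and $Y$ is Oka, invoking the jet-interpolation form of the Oka principle along the zero section to preserve the first-order data. The only step you leave implicit, which the paper makes explicit, is first extending the local spray to a \emph{continuous} map $\tilde s\colon TY\to Y$ (easy, since $TY$ deformation retracts onto $Y$) before applying BOPAJI; once that is said, the argument is complete.
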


\begin{proof} 
Let $Y$ be a Stein manifold. We embed $Y$ as the zero section in its holomorphic tangent bundle
$TY$. By Grauert's tubular neighborhood theorem there exist an open neighborhood 
$\Omega_0\subset TY$ of $Y$ and a holomorphic retraction $s_0\colon \Omega_0\to Y$.
In particular, the restriction of $s_0$ to $Y$ is the identity map on $Y$. 
Clearly there is a continuous map $\tilde s\colon TY\to Y$ which agrees with $s_0$ on a smaller 
neighborhood $\Omega\subset \Omega_0$ of $Y$ in $TY$. 
Note that $TY$ is also a Stein manifold, so if $Y$ is Oka then 
(using the jet interpolation property for holomorphic maps from Stein manifolds to $Y$)
there is a holomorphic map $s\colon TY\to Y$ which agrees with $s_0$ to the second order
along the zero section $Y\subset TY$. But such $s$ is a dominating holomorphic spray on $Y$,
so $Y$ is elliptic. 
\end{proof}

Gromov's result has been generalized by L\'arusson \cite{Larusson3} to a much larger class of manifolds 
(see also \cite[\S 3]{Larusson5} and \cite[p.\ 24]{FLflex}).  Call a complex manifold {\em good} if it is the image of an Oka map from a Stein manifold (see \S \ref{ss:Okamaps} for this notion), 
and {\it very good} if it carries a holomorphic affine bundle whose total space is Stein. 
The simplest examples of non-Stein good manifolds are the projective spaces.  Indeed, let $Q_n$ be the complement in $\P^n\times\P^n$ of the hypersurface
$	\big\{([z_0,\dots,z_n], [w_0,\dots,w_n]):z_0 w_0+\dots+z_n w_n=0\big\}$.
Note that $Q_n$ is the preimage of a hyperplane by the Segre embedding $\P^n\times \P^n\to \P^{n^2+2n}$, so $Q_n$ is Stein.  Let $\pi$ be the projection $Q_n\to\P^n$ onto the first component.  It is easily seen that $\pi$ has the structure of a holomorphic affine bundle with fibre $\C^n$, so $\P^n$ is very good.  (This observation is often called {\em the Jouanolou trick}.)

L\'arusson showed that the class of good manifolds contains all Stein manifolds and all quasi-projective manifolds; further, it is closed under taking submanifolds, products, covering spaces, finite branched covering spaces, and complements of analytic hypersurfaces.  The same is true of the class of very good manifolds. 

The following observation follows directly from Theorem \ref{OP-fiberbundles} and Proposition
\ref{prop:SteinOkaelliptic}.

\begin{corollary}
\label{cor:good}
A good manifold is Oka if and only if it is the image of an Oka map from an elliptic manifold.  
A very good manifold is Oka if and only if it carries an affine bundle whose total space is elliptic.
\end{corollary}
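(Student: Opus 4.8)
The plan is to prove both equivalences from the same two-step template: one implication uses that elliptic manifolds are Oka (Corollary \ref{subell-Oka}), the reverse uses that a Stein Oka manifold is elliptic (Proposition \ref{prop:SteinOkaelliptic}), and the two are glued together by an ascent/descent principle asserting that the total space of the relevant map is Oka precisely when the base is. I would treat the very good case first, because there the ascent/descent principle is exactly Theorem \ref{OP-fiberbundles}, and then transport the argument to the good case by replacing that theorem with the corresponding property of Oka maps.

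For the very good case, let $\pi\colon E\to X$ be a holomorphic affine bundle exhibiting $X$ as very good; its fiber is $\C^k$, which is Oka (indeed elliptic). Hence Theorem \ref{OP-fiberbundles} applies and gives that $E$ is Oka if and only if $X$ is Oka. If $X$ is Oka, then $E$ is Oka, and since $E$ is Stein, Proposition \ref{prop:SteinOkaelliptic} makes $E$ elliptic, so $X$ carries an affine bundle with elliptic total space. Conversely, if $X$ carries any affine bundle $\pi'\colon E'\to X$ with $E'$ elliptic, then $E'$ is Oka by Corollary \ref{subell-Oka}, and Theorem \ref{OP-fiberbundles} (the fiber of $\pi'$ again being $\C^k$, hence Oka) forces $X$ to be Oka. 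I note that the backward direction is free to use an affine bundle different from the Stein one supplied by the very-good hypothesis, which is harmless.

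For the good case I replace Theorem \ref{OP-fiberbundles} by the analogous property of Oka maps developed in \S\ref{ss:Okamaps}: for a surjective Oka map $\pi\colon E\to X$, the space $E$ is Oka if and only if $X$ is Oka. Granting this, suppose $X$ is good, so there is an Oka map $\pi\colon S\to X$ from a Stein manifold $S$ with image $X$. If $X$ is Oka, then $S$ is Oka by descent, and being Stein it is elliptic by Proposition \ref{prop:SteinOkaelliptic}; thus $X$ is the image of the Oka map $\pi$ from the elliptic manifold $S$. Conversely, if $X$ is the image of an Oka map $p\colon M\to X$ with $M$ elliptic, then $M$ is Oka by Corollary \ref{subell-Oka}, and the same principle yields that $X$ is Oka.

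The only nonroutine ingredient is this ascent/descent principle for Oka maps in the good case; everything else is a direct assembly of the two quoted results. The principle is the natural generalization of Theorem \ref{OP-fiberbundles} from fiber bundles with Oka fibers to arbitrary Oka maps, and it is precisely the content of \S\ref{ss:Okamaps}, so I would simply cite it rather than reprove it. The one point I expect to need care with is surjectivity: \emph{image} must mean that the map is onto $X$, so that descent genuinely produces the Oka property of $X$ and not merely of a proper subset.
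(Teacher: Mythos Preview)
Your proof is correct and matches the paper's approach: the paper simply states that the corollary follows directly from Theorem \ref{OP-fiberbundles} and Proposition \ref{prop:SteinOkaelliptic}, and you have faithfully unpacked that citation. Your observation that the good case actually requires the ascent/descent principle for Oka maps (Corollary \ref{cor:up-down} in \S\ref{ss:Okamaps}) rather than Theorem \ref{OP-fiberbundles} alone is a valid refinement of the paper's terse reference, since an Oka map need not be a fiber bundle.
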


This is a purely geometric characterization of the Oka property that holds, for example, for all quasi-projective manifolds. 

\begin{problem}
Is every complex manifold good, or even very good?
\end{problem}

\subsection{Stratified Oka manifolds}
\label{ss:stratOka}
The following notion was introduced in \cite{FLflex}.

\begin{definition}
\label{def:stratOka}
A complex manifold $Y$ is a \textit{stratified Oka manifold} if it admits a stratification $Y=Y_0\supset Y_1\supset\cdots\supset Y_m=\varnothing$ by closed complex subvarieties such that every connected component of each difference $Y_{j-1}\setminus Y_j$ $(j=1,\ldots,m)$ is an Oka manifold.
\end{definition}

While it is not known whether every stratified Oka manifold is actually an Oka manifold, we have the following result which we shall not prove here.

\begin{theorem}  \label{t:stratified-Oka}
{\rm \cite[Theorem 2]{FLflex}} A stratified Oka manifold is strongly dominable.
\end{theorem}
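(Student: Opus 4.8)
The goal is to show that a stratified Oka manifold $Y = Y_0 \supset Y_1 \supset \cdots \supset Y_m = \varnothing$ is strongly dominable, i.e.\ for every point $y \in Y$ there is a holomorphic map $f \colon \C^n \to Y$ (where $n = \dim Y$) with $f(0) = y$ and $\rank_0 f = n$.

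\begin{proof}[Proof proposal]
The plan is to argue by downward induction on the stratum containing the chosen point $y \in Y$, using the fact that each individual stratum is an \emph{open} Oka submanifold of some $Y_{j-1}$ that is complemented by a closed subvariety of lower dimension, together with a spreading-out argument that lets us push a map off the stratum into the ambient manifold. The base case is clean: if $y$ lies in the lowest-dimensional nonempty stratum $S = Y_{m-1} \setminus Y_m = Y_{m-1}$, which is a \emph{closed} submanifold that is itself Oka (hence in particular dominable), there is a dominating map $g \colon \C^{\dim S} \to Y_{m-1}$ through $y$. If $\dim S = \dim Y$, we are done on that component; otherwise $S$ sits as a positive-codimension subvariety, and the issue is that a map into $S$ cannot have full rank $n$ in $Y$. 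So the real content is in the inductive step, where one must take a dominating map \emph{into} a lower stratum and perturb it to a dominating map into $Y$ of full rank $n$.

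The key mechanism I would use is the following. Suppose $y$ lies in a stratum $S = Y_{j-1} \setminus Y_j$, a connected open Oka submanifold of $Y_{j-1}$, and let $A = Y_j \subset Y_{j-1}$ be the closed subvariety that $S$ complements. Since $S$ is Oka, the \emph{convex approximation property} (indeed dominability at $y$) gives a holomorphic map $h \colon \C^d \to S \subset Y$ with $h(0) = y$ and $\rank_0 h = d = \dim S$. If $d = n$ we are finished; the hard case is $d < n$, where $Y_{j-1}$ itself has positive codimension in $Y$. Here I would exploit that $Y_{j-1}$ is a \emph{proper analytic subvariety} of the strictly larger manifold $Y_{j-2}$: near the smooth point $y$ of $Y_{j-1}$ one can choose holomorphic coordinates in $Y$ and, using the remaining transverse directions together with either a local spray or the Oka property of the next stratum up, extend $h$ to a map $\C^n \to Y$ that recovers full rank $n$ at the origin. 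The cleanest way to organize this is to produce, at the smooth point $y$, finitely many holomorphic directions transverse to $Y_{j-1}$ that can be realized by entire maps, and combine them with $h$ by a composition or sum to restore the missing rank. I expect the argument to be packaged as: dominate within the stratum, then inductively lift the image off the stratum into the next-larger stratum while adding transverse rank, using the Oka property of that larger stratum to approximate the required local extension by a global holomorphic map.

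The step I expect to be the main obstacle is precisely this transverse rank recovery: ensuring that pushing the dominating map off the lower stratum into the ambient manifold genuinely increases the rank at the origin by the correct codimension, and that this can be done globally on $\C^n$ rather than merely locally. Locally near the smooth point $y$ there is no difficulty — one simply appends transverse coordinate directions — but to obtain an entire map $\C^n \to Y$ one must globalize, and the image will typically leave the stratum $S$ and enter the rest of $Y$ where the geometry is controlled only stratum-by-stratum. The natural tool is the Oka property of the strata through which the map passes, applied via the convex approximation property on compact convex sets in $\C^n$ to approximate the locally constructed full-rank extension; one then verifies that full rank at the origin, being an open condition, is preserved under sufficiently close approximation. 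Assembling these local full-rank extensions into a single global map that respects the stratification, and checking that the rank at $0$ stabilizes at $n$ after finitely many inductive steps, is where the care is needed; the topological triviality of the construction (everything happens over convex sets in $\C^n$) is what ultimately makes the approximations available.
\end{proof}
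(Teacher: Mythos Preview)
The paper does not prove this result; immediately after stating it, the text reads ``which we shall not prove here'' and defers to \cite[Theorem~2]{FLflex}. So there is no proof in the present paper to compare your proposal against.

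On the merits of your outline: the inductive shape (dominate within the stratum containing $y$, then recover the missing transverse rank) is reasonable, and you have correctly located the crux. But the mechanism you propose for that crux has a genuine gap. You suggest using ``the Oka property of that larger stratum to approximate the required local extension by a global holomorphic map.'' The next stratum up, $Y_{j-2}\setminus Y_{j-1}$, is \emph{disjoint} from $Y_{j-1}$ and in particular does not contain $y$; no holomorphic map into that stratum can pass through $y$. If instead you allow the extended map to take values in the ambient manifold $Y$ (so that it can hit $y$), its image no longer lies in any single Oka stratum, and CAP for the individual strata gives you nothing directly --- this is exactly the obstruction that makes ``stratified Oka $\Rightarrow$ Oka'' an open problem. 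Similarly, ``combine them with $h$ by a composition or sum'' is unavailable on a general complex manifold: there is no addition on $Y$, and you have not assumed a spray on $Y$ that would let you flow the image of $h$ in transverse directions globally. Your closing paragraph candidly acknowledges that assembling the local full-rank extensions into a single entire map ``is where the care is needed,'' but the proposal does not supply that care; as written it identifies the difficulty rather than resolving it. A workable argument has to organise the induction so that at each step the approximation/interpolation problem is posed for maps into a \emph{single} Oka manifold, with the lower strata appearing only through interpolation constraints on subvarieties of the source $\C^n$ --- not as direct targets for CAP.
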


\begin{example} (See \cite[\S 3]{FLflex}.) 
We show that every {\em Kummer surface} $Y$ admits a stratification $Y\supset C\supset\varnothing$, where $C$ is the union of 16 mutually disjoint smooth rational curves and the difference $Y\setminus C$ is an Oka manifold \cite[Lemma 7]{FLflex}. Thus $Y$ is stratified Oka, and we have the following corollary to Theorem \ref{t:stratified-Oka}.

\begin{corollary}  \label{c:Kummer-strongly-dominable}
Every Kummer surface is strongly dominable.
\end{corollary}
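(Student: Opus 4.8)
The plan is to recognize that this corollary is an immediate consequence of Theorem \ref{t:stratified-Oka}, once we have verified that a Kummer surface $Y$ is a stratified Oka manifold in the sense of Definition \ref{def:stratOka}. The decisive input is the stratification $Y = Y_0 \supset Y_1 = C \supset Y_2 = \varnothing$ described just above, where $C = \bigcup_{i=1}^{16} C_i$ is the exceptional locus of the resolution $Y \to T/\{\pm 1\}$ of the quotient of a two-dimensional complex torus $T = \C^2/\Lambda$ by the involution $x \mapsto -x$. To invoke Definition \ref{def:stratOka} I must check that every connected component of each difference $Y_{j-1} \setminus Y_j$ is an Oka manifold.

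For the bottom stratum, each connected component of $Y_1 \setminus Y_2 = C$ is one of the sixteen mutually disjoint smooth rational curves $C_i \cong \C\P^1$, which is homogeneous and hence Oka by Theorem \ref{CAP:Lie}. The substantive step is the top stratum $Y_0 \setminus Y_1 = Y \setminus C$, which is the content of \cite[Lemma 7]{FLflex}. The way I would establish it is to identify $Y \setminus C$ with $(T \setminus T[2])/\{\pm 1\}$, where $T[2]$ denotes the sixteen two-torsion points of $T$, i.e.\ the fixed points of the involution. Since $\{\pm 1\}$ acts freely on $T \setminus T[2]$, the projection $T \setminus T[2] \to (T \setminus T[2])/\{\pm 1\}$ is an unramified double cover; by Proposition \ref{OP-covering} it therefore suffices to prove that $T \setminus T[2]$ is Oka. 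But $T \setminus T[2]$ is the complement of a finite set of points in the two-torus $T = \C^2/\Lambda$, so it is Oka by Corollary \ref{p6.3}.

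With both strata verified, $Y$ is stratified Oka, and Theorem \ref{t:stratified-Oka} yields strong dominability at once. The only genuine obstacle lies in the geometric identification $Y \setminus C \cong (T \setminus T[2])/\{\pm 1\}$, which rests on the standard structure of a Kummer surface: the resolution of each of the sixteen $A_1$-singularities of $T/\{\pm 1\}$ introduces exactly one smooth rational $(-2)$-curve and leaves the complement of the singular locus untouched. Granting this classical fact, the remaining deductions are formal applications of the functorial properties of the Oka class recorded earlier in this section, so I expect no further difficulty.
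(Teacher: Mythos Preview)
Your argument is correct and follows the same route as the paper: exhibit the stratification $Y\supset C\supset\varnothing$, verify that each stratum is Oka, and apply Theorem~\ref{t:stratified-Oka}. In fact you are slightly more careful than the paper's text, which identifies $Y\setminus C$ directly with $\T\setminus\{p_1,\ldots,p_{16}\}$; as you observe, $Y\setminus C$ is really the free $\Z/2$-quotient of this punctured torus, so one needs the additional invocation of Proposition~\ref{OP-covering} that you supply.
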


Let us recall the structure of Kummer surfaces; see \cite{Barth-Hulek} for more information.  

Let $\T$ be a complex 2-torus, the quotient of $\C^2$ by a lattice $\Z^4\cong\Gamma\subset \C^2$ of rank $4$, acting on $\C^2$ by translations.  Let $\pi\colon \C^2\to\T=\C^2/\Gamma$ be the quotient map.  The involution $\C^2\to \C^2$, $(z_1,z_2)\mapsto (-z_1,-z_2)$, descends to an involution $\sigma:\T\to\T$ with precisely 16 fixed points $p_1,\ldots,p_{16}$.  In fact, if $\omega_1,\ldots,\omega_4\in\C^2$ are generators for $\Gamma$, then $p_1,\ldots,p_{16}$ are the images under $\pi$ of the 16 points $c_1\omega_1+\cdots+c_4\omega_4 \in\C^2$, where $c_1,\ldots,c_4\in\{0,\tfrac 1 2\}$; denote these points by $q_1,\ldots, q_{16}$. The quotient $\T/\{1,\sigma\}$ is a 2-dimensional complex space with 16 singular points $p'_1,\ldots,p'_{16}$ (the images of $p_1,\ldots,p_{16}$). Blowing up each $p'_j$ yields a smooth compact complex surface $Y$ containing 16 mutually disjoint smooth rational curves $C_1,\ldots,C_{16}$. This is the Kummer surface associated to the torus $\T$, or to the lattice $\Gamma$.

Here is an alternative description.  Let $X$ denote the surface obtained by blowing up the torus $\T$ at each of the 16 points $p_1,\ldots,p_{16}$.  Let $E_j\cong\P_1$ denote the exceptional divisor over $p_j$.  The involution $\sigma$ of $\T$ lifts to an involution $\tau: X \to X$ with the fixed point set $E=E_1\cup\cdots\cup E_{16}$.  The eigenvalues of the differential $d\tau$ at any point of $E$ are $1$ and $-1$.  Hence the quotient $X/\{1,\tau\}$ is smooth and contains 16 rational $(-2)$-curves $C_j\cong\P_1$, the images of the rational $(-1)$-curves $E_j$ in $X$.  The quotient is the Kummer surface $Y$.  Denoting by $\widehat \C^2$ the surface obtained by blowing up $\C^2$ at every point of the discrete set $\widetilde \Gamma= \bigcup\limits_{j=1}^{16}(q_j+\Gamma)$, we have the following diagram (see \cite[p.\ 224]{Barth-Hulek}):
\vskip -2mm
\[ 
	\xymatrix{ \widehat\C^2 \ar[r] \ar[d] & X \ar[r] \ar[d]  & Y \ar[d] \\ 
	\C^2 \ar[r]^{\pi} & \T \ar[r] & \T/\{1,\sigma\}} 
\]

The union $C=\cup_{j=1}^{16} C_j$ is Oka by Corollary \ref{p6.1}. The complement $Y \setminus C$ is biholomorphic to $\T\setminus\{p_1,\ldots,p_{16}\}$ which is Oka by Corollary \ref{p6.3}. This shows that the Kummer surface $Y$ is a stratified Oka manifold.
\qed\end{example}

Kummer surfaces are dense in the moduli space of all K3 surfaces, but we do not know whether it follows that all K3 surfaces are strongly dominable.  In fact, strong dominability is in general not closed in families of compact complex manifolds (see Corollary \ref{c:not-closed} below).

\subsection{The Oka property for compact complex surfaces}
\label{ss:Class7}
The information in this subsection is taken from \cite{FLflex}; for proofs we refer to the original source. 

Here is what we know about which minimal compact complex surfaces are Oka (see \cite[p.\ 4]{FLflex}); $\kappa\in\{-\infty,0,1,2\}$ denotes the Kodaira dimension. For the classification of surfaces see \cite{Barth-Hulek}.

$\kappa=-\infty$:  Rational surfaces are Oka.  A ruled surface is Oka if and only if its base is Oka.  Theorem~\ref{t:class-VII} belows covers surfaces of class VII if the global spherical shell conjecture is true.

$\kappa=0$:  Bielliptic surfaces, Kodaira surfaces, and tori are Oka.  It is unknown whether any or all K3 surfaces or Enriques surfaces are Oka.

$\kappa=1$:  Buzzard and Lu determined which properly elliptic surfaces are dominable \cite{Buzzard-Lu}.  Nothing further is known about the Oka property for these surfaces.

$\kappa=2$:  Surfaces of general type are not dominable (this is an easy consequence of \cite{Kobayashi-Ochiai}, Theorem~2), and hence not Oka.

Class VII in the Enriques-Kodaira classification comprises the non-algebraic compact complex surfaces of Kodaira dimension $\kappa=-\infty$.  Minimal surfaces of class VII fall into several mutually disjoint classes.  For second Betti number $b_2=0$, we have {\em Hopf surfaces} and {\em Inoue surfaces}.  For $b_2\geq 1$, there are {\em Enoki surfaces}, {\em Inoue-Hirzebruch surfaces}, and {\em intermediate surfaces}; together they form the class of {\em Kato surfaces}.  By the \textit{global spherical shell conjecture}, currently proved only for $b_2=1$ by Teleman \cite{Teleman}, every minimal surface of class VII with $b_2\geq 1$ is a Kato surface.  Assuming that the conjecture holds, it is possible to determine which minimal surfaces of class VII are Oka.

\begin{theorem}  \label{t:class-VII}
{\rm \cite[Theorem 4]{FLflex}}
Minimal Hopf surfaces and Enoki surfaces are Oka. Inoue surfaces, Inoue-Hirzebruch surfaces, and intermediate surfaces, minimal or blown up, are not strongly Liouville, and hence not Oka. 
\end{theorem}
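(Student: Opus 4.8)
The plan is to split the statement into its two disjoint assertions and handle each with a different technique: the positive part (Hopf and Enoki surfaces are Oka) proceeds by exhibiting enough holomorphic flexibility, while the negative part (Inoue, Inoue--Hirzebruch, and intermediate surfaces are not Oka) proceeds by violating the necessary condition furnished by Corollary \ref{cor:Liouville}, namely strong Liouvilleness.

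For the \textbf{Hopf surfaces}, the natural approach is to reduce to the already-established machinery. A minimal Hopf surface is, up to finite unramified quotient, a primary Hopf surface, which is a quotient of $\C^2\setminus\{0\}$ by a free action of a cyclic (or more generally suitable contracting) group of automorphisms. Since $\C^2\setminus\{0\}$ is Oka by Proposition \ref{complements} (it is the complement of the codimension-$2$ subvariety $\{0\}$), and the Oka property ascends and descends along unramified holomorphic coverings by Proposition \ref{OP-covering}, every primary Hopf surface is Oka; the finite quotient passage needed for general minimal Hopf surfaces is again handled by Proposition \ref{OP-covering}. This is essentially the same pattern as Corollary \ref{Hopf} for Hopf manifolds, so I expect it to be routine. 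For the \textbf{Enoki surfaces}, the key structural fact is that an Enoki surface contains an affine line bundle over an elliptic curve (a cycle of rational curves together with such a bundle), and more usefully that its minimal model carries enough $\C$-complete holomorphic vector fields to make it elliptic, or at least subelliptic; one would invoke Example \ref{example:elliptic}(B) to produce a dominating family of flow sprays and then conclude via Corollary \ref{subell-Oka}. Alternatively one exhibits the Enoki surface as a total space of a holomorphic fiber bundle with Oka base and fiber and applies Theorem \ref{OP-fiberbundles}. The correct geometric model is the main thing to pin down here.

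For the \textbf{negative} assertions, the strategy is uniform and comparatively clean. By Corollary \ref{cor:Liouville}, every Oka manifold is strongly Liouville, so it suffices to produce, on the universal cover of each of these surfaces, a nonconstant negative plurisubharmonic function. For \textbf{Inoue surfaces}, the universal cover is $\H\times\C$ (the product of the upper half-plane with the complex line), and $\H$ is biholomorphic to the disc $\D$, which is Liouville-violating: the function $-(\mathrm{Im}\,w)$ composed appropriately, or more cleanly the pullback of a bounded harmonic/plurisubharmonic function from $\H$, furnishes a nonconstant bounded (hence negative after normalization) plurisubharmonic function on the cover. For \textbf{Inoue--Hirzebruch} and \textbf{intermediate} surfaces, which are Kato surfaces built from a global spherical shell, one uses the fact that their universal covers likewise admit nonconstant bounded plurisubharmonic functions coming from the fibered structure over a hyperbolic base (or a disc factor in the developing map). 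In each case the point is that a disc or half-plane direction survives to the universal cover, defeating strong Liouvilleness and therefore the Oka property.

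I expect the \textbf{main obstacle} to lie in the Enoki case: unlike Hopf surfaces, Enoki surfaces are not covered by an obviously Oka manifold, and their automorphism groups are small, so the elliptic structure must be extracted from the specific geometry (the anticanonical cycle and the associated affine bundle) rather than from homogeneity. Producing an honest dominating spray, or an honest fiber-bundle presentation with Oka fiber, is the delicate step; the rest of the proof is an application of results already stated, chiefly Propositions \ref{complements} and \ref{OP-covering}, Theorem \ref{OP-fiberbundles}, and Corollaries \ref{subell-Oka} and \ref{cor:Liouville}.
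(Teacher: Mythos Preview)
The paper does not prove this theorem; it only states it with a reference to \cite{FLflex} and explicitly says ``for proofs we refer to the original source.'' So there is no argument here to compare your plan against, and your outline must be judged on its own.

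The structure of your plan is right, and the routine parts are essentially complete. The Hopf surface case is exactly Corollary~\ref{Hopf}. The Inoue surface case is also correct: the universal cover is $\H\times\C$, and the pullback of $-\mathrm{Im}\,w$ from the $\H$ factor is a nonconstant negative pluriharmonic (hence plurisubharmonic) function, so the cover is not Liouville and Corollary~\ref{cor:Liouville} applies.

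The genuine gaps are exactly where you hedge. For Enoki surfaces you propose two routes---complete vector fields via Example~\ref{example:elliptic}(B), or a fibre-bundle presentation via Theorem~\ref{OP-fiberbundles}---without supplying the geometric input that would make either one go through. An Enoki surface is not a holomorphic fibre bundle over its Albanese torus (the map has a degenerate fibre supported on the cycle of rational curves), and the space of global holomorphic vector fields on an Enoki surface is one-dimensional, which is not enough to span the tangent space at any point in dimension two. So neither of your proposed mechanisms works as stated; you need a sharper structural description of Enoki surfaces from \cite{FLflex} (or the sources cited there) before either Corollary~\ref{subell-Oka} or Theorem~\ref{OP-fiberbundles} can be invoked.

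For Inoue--Hirzebruch and intermediate surfaces your assertion that ``a disc or half-plane direction survives to the universal cover'' is not justified and is not obvious. These are Kato surfaces with $b_2\ge 1$, built from global spherical shells; unlike the $b_2=0$ Inoue surfaces their universal covers are not $\H\times\C$ and carry no evident hyperbolic factor. Producing the required nonconstant negative plurisubharmonic function on the universal cover needs a concrete argument specific to these classes (using, for instance, the curve configuration or known descriptions of the covers), which your plan does not provide. This is a second place where real work remains.
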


The notion of a strongly Liouville manifold was introduced just before Corollary \ref{cor:Liouville}.

Enoki surfaces are generic among Kato surfaces.  Inoue-Hirzebruch surfaces and inter\-mediate surfaces can be obtained as degenerations of Enoki surfaces.  
Thus, Theorem \ref{t:class-VII} yields the following corollary.

\begin{corollary}  \label{c:not-closed}
{\rm \cite[Corollary 5]{FLflex}}  
Compact complex surfaces that are Oka can degenerate to a surface that is not strongly Liouville (and hence is not Oka).  Consequently, the following properties are in general not closed in holomorphic families of compact complex manifolds: (a) the Oka property; (b) the stratified Oka property; (c) strong dominability; (d) dominability; (e) $\C$-connectedness; (f) strong Liouvilleness.
\end{corollary}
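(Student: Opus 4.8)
The plan is to exhibit a single holomorphic family of compact complex surfaces whose generic fibre is Oka but whose central fibre fails to be even minimally flexible, and then to read off all six assertions from the logical dependencies among the properties. First I would record the chain of implications that organizes (a)--(f) into a graded hierarchy: an Oka manifold is, via the trivial one-step stratification, a stratified Oka manifold; a stratified Oka manifold is strongly dominable by Theorem \ref{t:stratified-Oka}; strong dominability trivially implies dominability; an Oka manifold is strongly Liouville by Corollary \ref{cor:Liouville}; and an Oka manifold is $\C$-connected, since for any two points $y_0,y_1$ in a connected Oka manifold $Y$ a continuous path $\C\to Y$ through $y_0$ and $y_1$ can be deformed to a holomorphic map agreeing with it at $\{0,1\}$ by the interpolation form of the Oka principle (Theorem \ref{th:CAP}) applied to the Stein source $\C$. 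Thus being Oka entails all of (a)--(f).

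Next I would take the generic fibre to be an Enoki surface, which is Oka by Theorem \ref{t:class-VII} and hence enjoys every one of (a)--(f). The mechanism of the construction is that Inoue--Hirzebruch surfaces and intermediate surfaces arise as degenerations of Enoki surfaces: there is a holomorphic family $\{Y_t\}_{t\in\Delta}$ over the disc whose fibre $Y_t$ for $t\neq 0$ is an Enoki surface and whose central fibre $Y_0$ is an Inoue--Hirzebruch or intermediate surface. By Theorem \ref{t:class-VII} the central fibre is not strongly Liouville, hence not Oka; this already shows that properties (a) and (f) fail to be closed in holomorphic families.

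To obtain the remaining four I would prove that $Y_0$ is neither dominable nor $\C$-connected. Once $Y_0$ is known to be non-dominable, the contrapositive of the implication chain gives that it is not strongly dominable (so (c) and (d) fail) and, through Theorem \ref{t:stratified-Oka}, that it is not stratified Oka (so (b) fails); non-$\C$-connectedness then disposes of (e). Since the corresponding property does hold on every nearby fibre $Y_t$ with $t\neq 0$, each of (a)--(f) is thereby seen to be non-closed, and the single degenerating family does all the work at once.

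The main obstacle is precisely the verification that $Y_0$ fails dominability and $\C$-connectedness. The guiding principle is that entire curves in $Y_0$ are severely constrained: lifting an entire curve $\C\to Y_0$ to the universal cover $\wt Y_0$ and composing with a nonconstant negative plurisubharmonic function on $\wt Y_0$---which exists exactly because $Y_0$ is not strongly Liouville (Theorem \ref{t:class-VII})---produces a plurisubharmonic function on $\C$ that is bounded above, hence constant, so the image of the curve lies in a single level set. Upgrading this confinement to the sharper statements that no holomorphic map $\C^2\to Y_0$ has rank $2$ and that generic pairs of points cannot be joined by chains of entire curves requires the explicit geometry of the universal covers of Inoue--Hirzebruch and intermediate surfaces, in the spirit of the classical Inoue case whose universal cover $\H\times\C$ pins entire curves into the $\C$-factor. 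I would extract these structural facts, along with the existence of the degenerating family, from the detailed treatment of class VII surfaces in \cite{FLflex}.
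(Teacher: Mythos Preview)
Your approach is correct and matches the paper's, which gives essentially no argument beyond the sentence preceding the corollary (Enoki surfaces degenerate to Inoue--Hirzebruch and intermediate surfaces, so Theorem~\ref{t:class-VII} applies). You have supplied the logical scaffolding that the survey leaves implicit, and you have correctly isolated where the real content lies.

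One simplification is available for (b) and (c). You propose to deduce failure of strong dominability and the stratified Oka property from non-dominability of the central fibre, but in fact \emph{not strongly Liouville} already forces \emph{not strongly dominable} directly: if $Y_0$ were strongly dominable, then for every $\tilde y\in\wt Y_0$ one could lift a dominating map $\C^2\to Y_0$ based at $\pi(\tilde y)$ to a map $\C^2\to\wt Y_0$ hitting $\tilde y$ with full rank; composing with a nonconstant negative plurisubharmonic $\rho$ on $\wt Y_0$ gives a bounded-above plurisubharmonic function on $\C^2$, hence a constant, so $\rho$ is locally constant near every point and therefore globally constant. Thus (b), (c), and (f) follow from Theorem~\ref{t:class-VII} alone via the chain stratified Oka $\Rightarrow$ strongly dominable $\Rightarrow$ strongly Liouville, without invoking the harder non-dominability.

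Your identification of (d) and (e) as the genuine obstacle is exactly right: the confinement of entire curves to level sets of $\rho$ is not by itself enough, since such level sets can contain complex curves or even open pieces. The finer facts---that every entire map $\C^2\to Y_0$ is degenerate and that $Y_0$ is not $\C$-connected---do require the explicit structure of the universal covers of Inoue--Hirzebruch and intermediate surfaces, and deferring to \cite{FLflex} for this is appropriate and is precisely what the survey does by citing the corollary from that source.
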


In this context we mention the recent result by F.\ L\'arusson \cite{Larusson-deformations} that, in a holomorphic family of compact complex manifolds, the set of Oka fibers corresponds to a $G_\delta$ subset of the base manifold. It is an open question whether this set is always open. Corollary \ref{c:not-closed} says that the set of Oka fibers is not necessarily closed. In the same paper, L\'arusson gave a necessary and sufficient condition for the limit fiber of a sequence of Oka fibers to be Oka in terms of a newly introduced {\em uniform Oka property}. He also considered holomorphic submersions with noncompact fibers. 

Another general problem is to understand whether an Oka surface blown up at a point remains Oka; the corresponding question can be asked for blow-downs. For example, a complex torus (of any dimension $n>1$) blown up at finitely many points is Oka (see \cite[p.\ 255]{FF:book}). The proofs is obtained by showing that $\C^n$ (the universal covering space of the torus), blown up at all points of a discrete set $\widetilde \Gamma= \bigcup\limits_{j=1}^{m}(q_j+\Gamma)$ where $q_j\in \C^n$ and $\Gamma$ is a lattice in $\C^n$ (cf.\ \S \ref{ss:stratOka}), is Oka. Taking $\wt \Gamma$ to be the preimage of the blown-up points in our torus and applying Proposition \ref{OP-covering} we infer that the blown-up torus is Oka. We are unable to give a similar proof for the Hopf surfaces (quotients of $\C^2\setminus\{0\}$).

\begin{problem}
\label{pr:Hopf}  
Is a blown-up Hopf surface an Oka manifold?
\end{problem}


\subsection{Gromov's Oka principle for elliptic submersions}
\label{ss:ellipticsub}
We now present the most advanced known version of the Oka principle --- for sections of stratified subelliptic submersions over Stein spaces (cf.\ Theorem \ref{SES:OP} below). The notions and results in this subsection generalize those in \S \ref{ss:elliptic} above. For further results and complete 
proofs we refere to Chapter 6 in \cite{FF:book}.

Let $X$ and $Z$ be complex spaces and let $h\colon Z\to X$ be a holomorphic submersion. Given a point $z\in Z$, the subspace $VT_z Z= \Ker dh_z$ of the tangent space $T_z Z$ is called the {\em vertical tangent space} of $Z$ at $z$ (relative to $h$). 

A {\em fiber-spray} on $Z$ is a triple $(E,\pi,s)$, where $\pi\colon E\to Z$ is a holomorphic vector bundle and $s\colon E\to Z$ is a holomorphic map, such that for each point $z\in Z$ we have 
\[
    s(0_z)=z,\quad s(E_z) \subset Z_{h(z)} = h^{-1}(h(z)).
\]
The restriction of the differential $ds_{0_z}\colon T_{0_z}E \to T_z Z$ to the vertical subspace $E_z \subset T_{0_z}E$ maps $E_z$ to $VT_z Z$ and is called the {\em vertical derivative} of $s$ at the point $z\in Z$:
\begin{equation}
\label{vertical-derivative}
    Vd s_z = ds_{0_z}|_{E_z} \colon E_z \to VT_z Z.
\end{equation}
The spray is said to be {\em fiber-dominating} if the vertical derivative $Vd s\colon E\to VT Z$ is surjective.

Note that a (dominating) spray on a complex manifold $Y$ (see Def.\ \ref{def:spray} above) is the same thing as a (dominating) fiber-spray on the trivial submersion $Y\to X=\textrm{point}$. 

Similarly one defines dominability of a finite family of fiber-sprays.

\begin{definition}
\label{def:elsub}
A holomorphic submersion $h\colon Z\to X$ is said to be an {\em elliptic submersion} (resp.\ a {\em subelliptic submersion})
if every point $x_0\in X$ admits an open neighborhood $U\subset X$ such that the restricted submersion $h\colon Z|_U \to U$ admits a fiber-dominating spray (resp.\ a finite fiber-dominating family of fiber-sprays).

The submersion $h\colon Z\to X$ is {\em stratified elliptic (resp.\ stratified subelliptic)} if there exists a stratification of $X$ by closed complex subvarieties $X=X_0\supset X_{1}\supset\cdots\supset X_m=\varnothing$ such that every difference $S_k=X_k\bs X_{k+1}$ is nonsingular and the restricted submersion $h\colon Z|_{S_k} \to S_k$ is elliptic (resp.\ subelliptic) for every $k=0,\ldots,m-1$. 
\end{definition}

Examples of (sub-) elliptic submersions can be found in \cite{Gromov:OP} and in \cite[\S 6.4]{FF:book}.

The following is the most general known form of the Oka principle for sections of stratified holomorphic submersions over Stein spaces.

\begin{theorem} \label{SES:OP} 
{\rm \cite[Theorem 6.2.2, p.\ 243]{FF:book}} 
If $h\colon Z\to X$ is a stratified subelliptic submersion onto a  Stein space $X$, then sections $X\to Z$ satisfy the parametric Oka property with approximation and interpolation {\rm (POP)}. In particular, the inclusion $\Gamma_{\cO}(X,Z) \hra  \Gamma_{\cC}(X,Z)$ of the space of holomorphic sections into the space of continuous sections is a weak homotopy equivalence. 
\end{theorem}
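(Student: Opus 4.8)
The plan is to prove the parametric Oka property (POP) by the inductive bump-attaching scheme of modern Oka theory, reducing the global section problem over the Stein base to a succession of local extension-and-gluing problems in which (sub)ellipticity is used to \emph{linearize} the modification of sections. I would fix a smooth strongly plurisubharmonic exhaustion $\rho\colon X\to\R$ and exhaust $X$ by the compact, $\cO(X)$-convex sublevel sets $X_c=\{\rho\le c\}$, arranged to contain the compact approximation set $K$ in $X_{c_0}$. The goal is to construct, inductively in $c$, a family of sections that is holomorphic over a neighborhood of $X_c$, differs by a controllably small error from its predecessor on the set where holomorphicity was already achieved, respects the interpolation condition along the closed subvariety $X'$, and is fixed over the parameter subset $P_0\subset P$. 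With the errors chosen summable, the sequence converges uniformly on compacta to a global holomorphic section, and a homotopy from the given continuous section is produced by running the identical scheme with the enlarged parameter pair $\bigl(P\times[0,1],\,(P_0\times[0,1])\cup(P\times\{0\})\bigr)$.

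Each step of the induction splits into a \emph{noncritical case}, where $X_{c'}$ has the same topology as $X_c$ and is obtained from it by attaching finitely many convex bumps, and a \emph{critical case}, where $\rho$ has a Morse critical point between the two levels and a handle is attached. In the noncritical case I would attach a single convex bump $B$ to the current domain $A$, chosen so that $(A,B)$ is a strongly pseudoconvex Cartan pair with $A\cup B\supset X_{c'}$ and so that $B$ projects into a neighborhood over which $h$ carries a finite fiber-dominating family of sprays. Over $B$ a holomorphic section $b$ close to the current continuous section and homotopic to it in the fiber is produced by the fiber-spray linearization together with the Oka--Weil theorem applied in the fiber direction, exactly as in the proof sketch of Theorem~\ref{h-Runge1}. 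The two sections $a$ over $A$ and $b$ over $B$ then agree to high accuracy on the overlap $C=A\cap B$, and the crux is to fuse them into one holomorphic section over $A\cup B$.

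This fusion rests on the principal obstacle of the whole argument: a nonlinear \emph{splitting lemma} supplied by the sprays. Composing the family $(E_j,\pi_j,s_j)$ into a single dominating spray $s$, the fiber-domination property lets me write every section $C^0$-close to $a$ over $C$ as $s$ applied to a small holomorphic section $\xi$ of the pulled-back spray bundle $a^*E$; in particular $b=s\circ\xi$ on $C$. Since $C$ is Stein and $(A,B)$ is a Cartan pair, the bounded additive Cousin splitting for sections of a holomorphic vector bundle (Cartan's theorem with estimates) decomposes $\xi=\alpha-\beta$ with $\alpha$ holomorphic and small over $A$, and $\beta$ over $B$. Applying the spray replaces $a$ by $s\circ\alpha$ over $A$ and $b$ by $s\circ\beta$ over $B$; because the spray composition is nonlinear these match only approximately, so one iterates a rapidly convergent Newton-type scheme whose convergence is guaranteed by the smallness of $\xi$, yielding a holomorphic section over $A\cup B$. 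All of this must be carried out uniformly in the parameter $p\in P$, using the parametric forms of the Oka--Weil and Cousin-splitting theorems and leaving everything unchanged over $P_0$; the interpolation along $X'$ is built in by working throughout with the ideal sheaf of $X'$, so that all corrections vanish on $X'$.

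Finally, the \emph{critical case} is reduced to the noncritical one by first extending the section across the totally real core (stable manifold) of the critical point via approximation on a totally real handle, after which the remaining change of sublevel set is an ordinary convex bump; this device localizes the topological change. The \emph{stratified} hypothesis is incorporated by an outer induction on the stratification $X=X_0\supset\cdots\supset X_m=\varnothing$: one first solves the problem over a neighborhood of the singular locus $X_1$, where $h$ is stratified subelliptic over the lower-dimensional Stein subvariety $X_1$ and the inductive hypothesis applies, and then extends over the top stratum $S_0=X\setminus X_1$ by the subellipticity there, using the interpolation condition along $X_1$ to match. The weak homotopy equivalence of $\Gamma_{\cO}(X,Z)\hookrightarrow\Gamma_{\cC}(X,Z)$ is then immediate from POP applied to the parameter pairs $(\varnothing,S^k)$ and $(S^k,D^{k+1})$, precisely as in the proof of Corollary~\ref{cor:weak}.
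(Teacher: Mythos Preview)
Your scheme transplants the bump-by-bump procedure from the proof of Theorem~\ref{th:BOP} (the fiber-bundle case) to submersions, replacing the CAP step by a spray-based one.  The paper, however, takes a genuinely different route and says so explicitly in the paragraph following the theorem: ``While the first result [Theorem~\ref{th:CAP}] is proved by inductive application of a one-step procedure, the proof of the latter uses an elaborate scheme of constructing a global section from a simplicial complex of local holomorphic sections and homotopies between them.''  In the approach of \cite[Chap.~6]{FF:book} one first fixes a Stein covering of a sublevel set and builds a full complex of local holomorphic sections, holomorphic homotopies on double overlaps, higher homotopies on higher overlaps, all approximating the given continuous section; a separate inductive gluing collapses this complex to a single holomorphic section.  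Subellipticity enters through the axiomatized (parametric) homotopy approximation property (P)HAP, which is the relative form of Theorem~\ref{h-Runge1}.

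The place where the one-step procedure does not carry over cleanly is your approximation step over the bump $B$.  In the fiber-bundle case $Z|_B\cong B\times Y$ and CAP of $Y$ produces the approximating section directly.  For a general submersion there is no trivialization and no fixed fiber; the spray (equivalently HAP) only pushes forward a homotopy \emph{starting from a section already holomorphic over all of $B$}---exactly the hypothesis of Theorem~\ref{h-Runge1}.  Your sentence ``a holomorphic section $b$ close to the current continuous section \ldots\ is produced by the fiber-spray linearization together with the Oka--Weil theorem'' glosses over where that initial holomorphic section over $B$ comes from.  One can supply it when $B$ sits inside a local trivialization near a point of the graph, but then the bumps must be refined adaptively relative to the evolving section, and keeping this uniform in $p\in P$ is the bookkeeping the simplicial scheme was built to organize.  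Two smaller inaccuracies: the splitting in Proposition~\ref{splitting} is obtained by the implicit function theorem in one stroke, not by a Newton iteration, and the sprays enter only in setting up the transition map $\gamma$ (Proposition~\ref{gluing-sprays}); also, composing a finite dominating family of sprays into a single dominating spray is only known over Stein pieces (cf.\ the remark before Example~\ref{example:elliptic}), so it must be done locally near the graph, not globally on $Z|_U$.  Your derivation of the weak homotopy equivalence from POP matches Corollary~\ref{cor:weak} verbatim.
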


The basic form of Theorem \ref{SES:OP} (for elliptic submersions onto Stein manifolds) is due to Gromov \cite{Gromov:OP}. The result as stated here was obtained in a series of papers; a complete proof, with references to the original sources, is available in \cite[Chapter 6]{FF:book}. This version of the Oka principle has already been used in some applications where the corresponding result for stratified fiber bundles (Theorem \ref{th:CAP} in \S \ref{ss:Oka}) does not suffice. In particular, the solution of the {\em holomorphic Vaserstein problem}, due to Ivarsson and Kutzschebauch  \cite{Ivarsson-Kutzschebauch2}, uses the Oka principle for sections of stratified elliptic submersions.

The fiber-domination property is used in a similar way as in the proof of Theorem \ref{h-Runge1} above. In the non-stratified case we use fiber-sprays on $Z$ over small open Stein sets $U\subset X$ to prove the exact analogue of Theorem \ref{h-Runge1}. That is, a homotopy of holomorphic sections over a compact $\cO(X)$-convex subset $K\subset U$, with the initial section holomorphic over $U$, can be approximated by a homotopy of holomorphic sections over $U$. The analogous result holds with continuous dependence on a parameter \cite[\S 6.6]{FF:book}. In the stratified case, this approximation result is used within strata. This replaces the role of CAP (see Def.\ \ref{def:CAP}) in the construction of global holomorphic sections. 

In fact, there is an axiomatic version of the approximation property for submersions, called the {\em (parametric) homotopy approximation property}, (P)HAP, which allows us to prove Theorem \ref{SES:OP} (cf.\ Theorem 6.6.6 in \cite[p.\ 266]{FF:book}).  

There are other key differences between the proof of the Oka principle for stratified fiber bundles (i.e., of Theorem \ref{th:CAP}) and in the stratified submersion case. While the first result is proved by inductive application of a one-step procedure, the proof of the latter uses an elaborate scheme of constructing a global section from a simplicial complex of local holomorphic sections and homotopies between them.

%
\subsection{Oka maps}
\label{ss:Okamaps}
Following the philosophy of Grothendieck -- that a property should not be defined only for objects in a category, but also for morphisms -- we now extend the notion of the Oka property to holomorphic maps. This subsection is mainly based on \cite[\S 6.14]{FF:book} and on the papers \cite{FF:OkaMaps,FF:Invariance}. 

Assume that $\pi\colon E\to B$ is a holomorphic map of complex spaces, $X$ is a Stein space, $P_0 \subset P$ are compact Hausdorff spaces (the parameter spaces), and $f\colon P\times X\to B$ is an $X$-holomorphic map, meaning that $f(p,\cdotp)\colon X\to B$ is holomorphic on $X$ for every fixed $p\in P$. Consider maps in the following diagram 
(cf.\ \cite[p.\ 287]{FF:book}):
\[
	\xymatrix{ P_0\times X \ar[d]_{incl} \ar[r]^{\ \ \ f} & E \ar[d]^{\pi} \\ 
	           P\times X \ar[r]^{\ \ \ f} \ar[ur]^{F} & B }
\]
A map $F\colon P\times X\to E$ satisfying $\pi\circ F=f$ is said to be a {\em lifting} of $f$; such a map $F$ is {\em $X$-holomorphic} on $P_0$ if $F(p,\cdotp)$ is holomorphic for every $p\in P_0$.

%
%
%
%
\begin{definition}
\label{def:POPmap}
A holomorphic map $\pi\colon E\to B$ enjoys the {\em Parametric Oka Property} {\rm (POP)} if for any collection $(X,X',K,P,P_0,f,F_0)$, where $X$ is a  Stein space, $X'$ is a closed complex subvariety of $X$,
$P_0\subset P$ are compact Hausdorff spaces, $f\colon P\times X\to B$ is an $X$-holomorphic map, and $F_0 \colon P\times X\to E$ is a continuous map such that $\pi\circ F=f$, $F_0(p,\cdotp)$ is holomorphic on $X$ for all $p\in P_0$ and is holomorphic on $K\cup X'$ for all $p\in P$, there exists a homotopy $F_t\colon P\times X\to E$ satisfying the following for all $t\in [0,1]$:
\begin{itemize}
\item[\rm (i)]    $\pi\circ F_t=f$,
\item[\rm (ii)]   $F_t=F_0$ on $(P_0\times X) \cup (P\times X')$,
\item[\rm (iii)]  $F_t$ is $X$-holomorphic on $K$ and uniformly 
close to $F_0$ on $P\times K$, and
\item[\rm (iv)]   the map $F_1\colon P\times X\to E$ is $X$-holomorphic.
\end{itemize}
The map $\pi\colon E\to B$ enjoys the {\em Basic Oka Property} {\rm (BOP)} if the above holds for the case when $P$ is a singleton and $P_0=\varnothing$.
\end{definition}

Note that a complex manifold $E$ enjoys a certain Oka property if and only if the trivial map $E\to point$ does. 

POP of a map $\pi\colon E\to B$ is illustrated by the following diagram.
\vskip -3mm
\[
\xymatrix{ 
	P_0 \ar[d]_{incl} \ar[r] & \cO(X,E) \ar[d] \ar[r] 
	& \cC(X,E) \ar[d]^{\pi} \\
  P   \ar[r]_{\!\!\!\!\!\! f}  \ar[ur]^{F_1}	\ar[urr]_{\qquad F_0} 
  & \cO(X,B) \ar[r] & \cC(X,B) } 
\]

The problem of lifting a single holomorphic map $f\colon X\to B$ to a holomorphic map $F\colon X\to E$ easily reduces to the problem of finding a section of the pull-back map. As an example, we prove the following result \cite[Corollary 6.14.2]{FF:book}.

\begin{corollary}
\label{SES-BOP} 
{\rm (i)} Every stratified subelliptic submersion enjoys {\rm BOP}.    

\noindent {\rm (ii)} 
Every stratified holomorphic fiber bundle with Oka fibers enjoys {\rm BOP}. 
\end{corollary}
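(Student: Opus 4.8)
The plan is to reduce both statements to the corresponding Oka principle for \emph{sections} of a stratified submersion, which we have already established. The key observation is the one foreshadowed in the paragraph immediately before the corollary: lifting a single holomorphic map $f\colon X\to B$ through $\pi\colon E\to B$ is equivalent to finding a section of the pull-back $\pi'\colon f^*E\to X$, whose fiber over $x\in X$ is $E_{f(x)}$. Since BOP is the non-parametric case ($P$ a singleton, $P_0=\varnothing$), I only need to handle the basic (non-parametric) version, which keeps the bookkeeping light.

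First I would set up the pull-back. Given the data $(X,X',K,f,F_0)$ with $f\colon X\to B$ holomorphic and $F_0\colon X\to E$ a continuous lifting that is already holomorphic on $K\cup X'$, form the fiber product
\[
	f^*E = \{(x,e)\in X\times E : f(x)=\pi(e)\},
\]
with projection $\pi'\colon f^*E\to X$, $(x,e)\mapsto x$. Liftings $F\colon X\to E$ of $f$ correspond bijectively to sections $\sigma\colon X\to f^*E$ via $F(x)=(\text{second component of }\sigma(x))$; this correspondence preserves holomorphicity, continuity, the constraints on $K\cup X'$, and uniform closeness (the $C^0$ topologies agree under this identification). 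So the continuous lifting $F_0$ becomes a continuous section $\sigma_0$ of $\pi'$ that is holomorphic over $K\cup X'$, and producing a homotopy $F_t$ with the required properties is exactly producing a homotopy of sections $\sigma_t$ of $\pi'$ satisfying the same approximation-interpolation conditions.

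The heart of the argument is then to check that $\pi'\colon f^*E\to X$ inherits the relevant structure from $\pi\colon E\to B$, so that Theorem~\ref{SES:OP} (in case (i)) or Theorem~\ref{th:CAP} (in case (ii)) applies to its sections over the Stein base $X$. For case (ii), if $\pi$ is a stratified holomorphic fiber bundle with Oka fibers, I stratify $X$ so that each stratum maps by $f$ into a single stratum of $B$ over which $\pi$ is a genuine fiber bundle with a fixed Oka fiber; pulling back a locally trivial bundle by a holomorphic map again yields a locally trivial bundle with the same fiber, so $\pi'$ is a stratified holomorphic fiber bundle over the Stein space $X$ with Oka fibers, and Theorem~\ref{th:CAP} gives the desired section homotopy. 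For case (i), the analogous point is that a fiber-dominating spray on $Z|_U=\pi^{-1}(U)$ over a small $U\subset B$ pulls back, under $f$, to a fiber-dominating spray on $(f^*E)|_{f^{-1}(U)}$: one pulls back the spray bundle and the spray map fiberwise, and the vertical derivative stays surjective because $df$ carries vertical directions of $f^*E$ isomorphically onto vertical directions of $E$ along each fiber. Thus $\pi'$ is a stratified subelliptic submersion over the Stein base $X$, and Theorem~\ref{SES:OP} applies.

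The main obstacle I anticipate is the stratification bookkeeping, which is where the hypotheses are genuinely used rather than merely quoted. In both cases the base stratification of $B$ must be pulled back to a stratification of $X$ that is still by \emph{closed complex subvarieties} with smooth strata, and one must check that $f$ sends each stratum of $X$ into a single stratum of $B$ so that the pulled-back structure is constant along strata; this requires choosing $X=f^{-1}(X_0)\supset f^{-1}(X_1)\supset\cdots$ and verifying genericity or, if necessary, refining further so that the strata are nonsingular. Once the pull-back is confirmed to be a stratified subelliptic submersion (case (i)) or a stratified holomorphic fiber bundle with Oka fibers (case (ii)) over the Stein base, everything else is a direct appeal to the already-proven section results, so the corollary follows without further analytic input.
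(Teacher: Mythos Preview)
Your proposal is correct and follows essentially the same approach as the paper: reduce to the section problem by pulling back $\pi$ along $f$, stratify $X$ so that each stratum maps into a stratum of $B$, observe that the pull-back inherits the stratified subelliptic (resp.\ stratified Oka-fiber bundle) structure, and then invoke Theorem~\ref{SES:OP} (resp.\ Theorem~\ref{th:CAP}) for sections over the Stein base $X$. The paper's proof is terser and does not spell out the spray pull-back or the stratification refinement as carefully as you do, but the argument is the same.
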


\begin{proof}
Let $\wt \pi \colon f^*E\to X$ denote the pull-back of a holomorphic submersion $\pi\colon E\to B$ by a holomorphic map $f\colon X\to B$. If $\pi\colon E\to B$ is a stratified subelliptic submersion, then so is $\wt \pi \colon f^*E \to X$. (Stratify $X$ such that each stratum is mapped by $f$ into a stratum of $B$ over which the submersion $\pi$ is subelliptic.) Then sections of $\wt \pi \colon f^*E \to X$ are in bijective correspondence with liftings $X\to E$ of $f$. Since sections of $f^*E \to X$ satisfy BOP by Theorem \ref{th:CAP}, $\pi$ also satisfies BOP.  The same proof applies in case (ii).
\end{proof}

The above argument fails in the parametric case since we do not have a holomorphic dependence of the pull-back $f^*E \to X$ on the map $f\colon X\to B$ in a given continuous family of maps. Nevertheless, the implication BOP$\,\Rightarrow\,$POP still holds.

%
%
%
%
\begin{theorem}
\label{mapsBOP-POP}
{\em (\cite{FF:OkaMaps}, \cite[Theorem 6.14.3]{FF:book})}
For every holomorphic submersion of complex spaces we have the implication $\hbox{\rm BOP}\Longrightarrow \hbox{\rm POP}$, provided that {\rm POP} is restricted to parameter spaces $P_0\subset P$ that are Euclidean compacta.
\end{theorem}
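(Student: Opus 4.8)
The plan is to deduce the parametric property from the basic one by the standard parametric Oka scheme: first normalize the parameter data to a cube relative to its boundary, then run the global Stein gluing construction with the parameter carried along continuously, using \BOP{} only to produce the local holomorphic liftings at each stage.

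First I would normalize the parameters. Since $P_0\subset P$ are Euclidean compacta, I would embed $P$ into some $\R^m$ and, by the homotopy extension property together with a cell decomposition approximating $(P,P_0)$, reduce the general problem to a finite sequence of elementary steps. Each step extends a solution that is already $X$-holomorphic over $\partial Q$ (the boundary of a cube $Q\cong[0,1]^k$) to one that is $X$-holomorphic over all of $Q$, while leaving it unchanged over $\partial Q$ and over the subvariety $X'$ and approximately unchanged over $K$. Thus it suffices to treat the model pair $P_0=\partial Q\subset Q=P$, with conditions (ii) and (iii) of Definition \ref{def:POPmap} carried along; this reduction is purely homotopy-theoretic and uses no complex geometry.

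The analytic core is the one-cube step. Here I would exhaust $X$ by an increasing sequence of $\cO(X)$-convex compacta, and at each stage work over a strongly pseudoconvex Cartan pair, building the homotopy $F_t$ by the familiar one-step procedure: produce a local holomorphic lifting over a small Stein set and glue two local holomorphic liftings over a convex overlap. The basic property \BOP{} of $\pi\colon E\to B$, applied to the pull-back submersion $f(p,\cdotp)^*E\to X$ for an individual parameter value exactly as in the proof of Corollary \ref{SES-BOP}, furnishes the existence of the required local holomorphic liftings and homotopies; the gluing and approximation are then organized over the cube $Q$ by a continuous partition of unity, and condition (ii) is preserved by supporting every fibrewise modification away from $\partial Q$ and from $X'$ through cut-offs and fibre-convex combinations. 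The errors are kept summable so that the infinite exhaustion of $X$ converges to a global $X$-holomorphic family $F_1$.

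The hard part is precisely the interface between holomorphy in $x\in X$ and mere continuity in the parameter $p\in Q$. One cannot enlarge the Stein source to $Q\times X$ and invoke \BOP{} there, because $f$ is holomorphic only in $x$ and not in $p$ --- this is exactly the obstruction flagged in the remark following Corollary \ref{SES-BOP}, that the pull-back $f^*E$ fails to depend holomorphically on $f$. The genuine difficulty is therefore that \BOP{}, taken as a hypothesis, provides only \emph{existence} of a lifting for each fixed $p$, whereas \POP{} demands a \emph{continuous family}. Bridging this gap is the heart of the matter: one must upgrade \BOP{} to a local, parametrized form of the homotopy approximation property, so that the local liftings produced at each stage depend continuously on $p\in Q$ and match the already-fixed solution over $\partial Q$. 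I expect extracting this parametrized local statement, and then propagating continuity through the countably many gluing steps without disturbing the boundary and subvariety constraints, to be the main obstacle; the cell-by-cell induction over $P$ and the Stein-theoretic gluing over $X$ are, by comparison, routine once the parametrized local input is in hand.
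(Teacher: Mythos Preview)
The paper does not prove Theorem~\ref{mapsBOP-POP}; it is stated with references to \cite{FF:OkaMaps} and \cite[Theorem 6.14.3]{FF:book}, so there is no in-paper argument to compare against line by line.

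Your proposal has the correct global architecture --- the cell-by-cell reduction of $(P,P_0)$ to a cube relative to its boundary, and the Stein exhaustion of $X$ with Cartan-pair gluing --- and you correctly isolate the crux: applying \BOP\ parameter-by-parameter to the pull-backs $f(p,\cdot)^*E\to X$ yields liftings with no a~priori continuity in $p$. But the mechanism you then invoke to restore continuity, namely a partition of unity in $p$ together with ``fibre-convex combinations'', is exactly where the proposal breaks. The fibres of a general holomorphic submersion carry no linear or convex structure, so there is nothing in which to take such combinations; and even where local charts on the fibre exist, the individual liftings produced by separate invocations of \BOP\ need not land in a common chart. You are candid that this is the unresolved hard part, but as written the proposal does not bridge it.

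The proof in the cited sources does not attempt to patch together parameter-by-parameter outputs of \BOP. The key idea is to \emph{linearise first}: around any given $X$-holomorphic family of sections one builds a dominating holomorphic spray of sections (as in Lemma~\ref{local-spray}), which identifies nearby sections with maps into a Euclidean space. In these spray coordinates the parametric approximation step becomes an Oka--Weil problem for $\C^N$-valued functions depending continuously on $p\in P$, and continuity in $p$ is then automatic from the linear structure rather than manufactured after the fact by cut-offs. This is what underlies the (parametric) homotopy approximation property PHAP mentioned after Theorem~\ref{SES:OP}; the passage PHAP $\Rightarrow$ \POP\ is the standard stepwise construction you describe. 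The role of \BOP\ in this scheme is to furnish, at each stage, the single holomorphic lifting that serves as the base point around which the spray is built --- not to produce a separate lifting for every $p$. In short, the missing move in your outline is to linearise via sprays \emph{before} invoking \BOP, rather than invoking \BOP\ pointwise in $p$ and then trying to interpolate its outputs.
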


The following consequence of Corollary \ref{SES-BOP} and Theorem \ref{mapsBOP-POP} gives the most general known Oka property of holomorphic maps.

\begin{corollary}
\label{SES-POP}
{\rm (i)} Every stratified subelliptic submersion enjoys {\rm POP}. 

\noindent {\rm (ii)} Every stratified holomorphic fiber bundle with Oka fibers enjoys {\rm POP}. 
\end{corollary}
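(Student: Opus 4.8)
The plan is to combine the two cited results directly, since both halves of the corollary follow by the same two-step argument. First I would observe that a stratified subelliptic submersion is, in particular, a holomorphic submersion of complex spaces, and that Corollary \ref{SES-BOP}(i) already establishes that every such submersion enjoys the Basic Oka Property (BOP). The only remaining task is to upgrade BOP to the Parametric Oka Property (POP).

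Next I would invoke Theorem \ref{mapsBOP-POP}, which asserts precisely the implication BOP $\Rightarrow$ POP for an arbitrary holomorphic submersion of complex spaces, provided the parameter spaces $P_0 \subset P$ are restricted to Euclidean compacta. Applying this to our submersion immediately yields POP, proving part (i). For part (ii) the argument is word-for-word identical: Corollary \ref{SES-BOP}(ii) supplies BOP for a stratified holomorphic fiber bundle with Oka fibers, and the same instance of Theorem \ref{mapsBOP-POP} then delivers POP.

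There is no substantive obstacle at this level, since the entire analytic content has been packaged into the two quoted results; the corollary is essentially a bookkeeping composition. The genuinely hard part lies inside Theorem \ref{mapsBOP-POP}, namely the passage from the basic single-map case to a continuous family of maps. As the text preceding that theorem notes, in the parametric setting one no longer has holomorphic dependence of the pull-back $f^*E \to X$ on the map $f\colon X\to B$, so the clean pull-back reduction used to prove Corollary \ref{SES-BOP} is unavailable and a more delicate argument (carried out in \cite{FF:OkaMaps}) is required. Here I would simply cite that theorem and apply its output, flagging the restriction to Euclidean compacta as the only hypothesis that must be carried along.
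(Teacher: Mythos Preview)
Your proposal is correct and matches the paper's approach exactly: the paper states the corollary as a direct consequence of Corollary~\ref{SES-BOP} and Theorem~\ref{mapsBOP-POP}, and gives no further argument. Your write-up is, if anything, more detailed than what the paper provides.
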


Another interesting point is that POP is a {\em local property} in the following sense. 

%
%
%
%
\begin{theorem}[\bf Localization principle for POP] 
{\rm \cite[Theorem 4.7]{FF:Invariance}}
\label{localization-of-POP}
A holomorphic submersion $\pi\colon E\to X$ of a complex space $E$ onto a complex space $X$ satisfies \POP\ if and only if
every point $x\in X$ admits an open neighborhood $U_x\subset X$ such that the restricted submersion $\pi\colon E|_{U_x}\to U_x$ satisfies \POP.
\end{theorem}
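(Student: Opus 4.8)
The plan is to prove the nontrivial (backward) implication by reducing global \POP\ first to global \BOP, and then, via a single pull-back over the Stein source, to the local hypothesis; the forward implication is a direct restriction argument. To keep the two roles of ``base'' and ``source'' apart, I write the submersion as $\pi\colon E\to X$ and reserve $S$ for the auxiliary Stein space appearing in the test data of Definition \ref{def:POPmap}.

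\textbf{Forward direction.} Suppose $\pi\colon E\to X$ satisfies \POP\ and fix an open set $U=U_x\subset X$. Given test data $(S,S',K,P,P_0,g,G_0)$ for \POP\ of $\pi\colon E|_U\to U$, I compose $g$ with the inclusion $U\hra X$ to obtain an $S$-holomorphic map $f\colon P\times S\to X$, and regard $G_0$ as a continuous lifting $F_0\colon P\times S\to E$ of $f$ with the same holomorphy. Applying \POP\ of $\pi\colon E\to X$ yields a homotopy $F_t$ with $\pi\circ F_t=f$; since $f$ takes values in $U$, automatically $F_t(P\times S)\subset\pi^{-1}(U)=E|_U$, so $F_t$ is the sought homotopy of liftings of $g$. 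Hence $\pi\colon E|_U\to U$ satisfies \POP.

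\textbf{Backward direction.} Assume local \POP\ over a cover $\{U_x\}$ of $X$. My first goal is global \BOP, the case $P=\{pt\}$, $P_0=\varnothing$. Given \BOP\ test data $(S,S',K,f,F_0)$ with a single holomorphic map $f\colon S\to X$, I form the pull-back submersion $\widetilde\pi\colon f^*E\to S$ over the Stein space $S$; as in Corollary \ref{SES-BOP}, liftings of $f$ correspond bijectively to sections of $\widetilde\pi$, and $F_0$ to a continuous section holomorphic over $K\cup S'$. The key point is that $f^*E\to S$ inherits a local approximation property over $S$: each $s\in S$ has a Stein neighbourhood $W\subset S$ with $f(\overline W)\subset U_x$ for some $x$, and over $W$ the submersion $f^*E\to S$ is the pull-back of $E|_{U_x}\to U_x$ by the single map $f|_W\colon W\to U_x$. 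Consequently, a homotopy of holomorphic sections over an $\cO(S)$-convex compact $K'\subset W$ whose initial section is holomorphic over $W$ is nothing but a homotopy of holomorphic liftings of the fixed map $f|_W$, and local \POP\ of $E|_{U_x}\to U_x$ (applied with parameter space $[0,1]$, $P_0=\{0\}$) approximates it by holomorphic liftings over $W$. This is exactly the homotopy approximation property $(\mathrm{HAP})$ for $f^*E\to S$, and the abstract Oka principle over a Stein base --- the engine behind Theorem \ref{SES:OP} --- then produces a holomorphic section approximating $F_0$ over $K$ and agreeing with it over $S'$. Thus $\pi\colon E\to X$ satisfies \BOP, and Theorem \ref{mapsBOP-POP} upgrades this to global \POP\ (with parameter spaces restricted to Euclidean compacta, as in the statement).

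\textbf{Main obstacle.} The genuine difficulty is that the pull-back $f^*E$ depends on $f$ only continuously, not holomorphically, in a family --- precisely the obstruction noted after Corollary \ref{SES-BOP} --- so the parametric case cannot be settled by pulling back the whole family at once. The resolution is to route the parameter through Theorem \ref{mapsBOP-POP} rather than through the pull-back, confining the use of pull-backs to the non-parametric reduction to \BOP. The remaining care --- choosing the sets $W$ compatibly with an $\cO(S)$-convex exhaustion of $S$, identifying sections of $f^*E$ with liftings, and checking that the cited Stein-base machinery preserves both the approximation on $K$ and the interpolation on $S'$ --- is routine given the results already assembled, and I would cite rather than reprove the abstract $(\mathrm{HAP})\Rightarrow\POP$ theorem.
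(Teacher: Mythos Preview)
Your argument is correct and close in spirit to what the paper indicates. The paper gives only a one-line hint---``the proof uses the fact that POP follows from PHAP, applied over small open subsets of the base space''---so there is little to compare against in detail, but the organizational choice you make is worth noting.

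You reduce global \POP\ to global \BOP\ via Theorem~\ref{mapsBOP-POP}, then for \BOP\ pull back along the single holomorphic map $f\colon S\to X$ and derive HAP for $f^*E\to S$ from local \POP\ (applied with $P=[0,1]$, $P_0=\{0\}$). The paper's hint points instead to the direct parametric route: given parametric test data over a Stein $S$, one covers $S$ by small Stein sets $W$ with $f(P\times\overline W)\subset U_x$ for some $x$ (possible since $P$ is compact), applies local \POP\ of $E|_{U_x}\to U_x$ to the full parametric family over $W$ to obtain PHAP locally, and then runs the Stein-base induction carrying the parameter throughout. Your detour through \BOP\ is cleaner in that Theorem~\ref{mapsBOP-POP} absorbs the parametric difficulty in one citation; the direct PHAP approach avoids invoking that (nontrivial) result but requires tracking the parameter through the exhaustion argument. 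Both are valid, and your identification of the obstruction---that one cannot pull back a family $f_p$ holomorphically---and your way around it are exactly right.
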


The proof of this result uses the fact that POP follows from PHAP, applied over small open subsets of the base space.

In analogy to the class of Oka manifolds, we introduce the class of {\em Oka maps} (\cite[\S 16]{Larusson2}, \cite{FF:OkaMaps}).

\begin{definition}
\label{def:Oka-map}
A holomorphic map $\pi\colon E\to B$ of complex spaces is said to be an {\em Oka map} if it is a Serre 
fibration and it enjoys \POP. 
\end{definition}

A holomorphic map is an Oka map precisely when it is an {\em intermediate fibration} in L\'arusson's model category 
(see \S \ref{sec:Appendix}). The following is an immediate consequence of the definitions and of Theorem \ref{th:CAP}.

\begin{corollary}
\label{cor:up-down}
Assume that $E$ and $B$ are complex manifolds and $\pi\colon E\to B$ is a surjective Oka map. Then $E$ is an Oka manifold if and only if $B$ is an Oka manifold.
\end{corollary}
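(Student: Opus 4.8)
The plan is to reduce everything to the convex approximation property $\CAP$ of Definition \ref{def:CAP}: by Theorem \ref{th:CAP} a complex manifold is Oka if and only if it enjoys $\CAP$, so it suffices to prove that $E$ satisfies $\CAP$ when $B$ does, and conversely. Throughout I will exploit the two structural features of a surjective Oka map $\pi$ recorded in Definition \ref{def:Oka-map}: the homotopy lifting property (it is a Serre fibration), and the basic Oka property $\BOP$ of Definition \ref{def:POPmap}, which lets me replace a continuous lift of a holomorphic map $X\to B$ from a Stein space by a holomorphic one, with approximation on a compact $\cO(X)$-convex set.

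The easy direction is $E$ Oka $\Rightarrow B$ Oka. Given a holomorphic map $g\colon K\to B$ from a compact convex set $K\subset\C^n$, defined on a convex (hence Stein and contractible) neighborhood $\Omega$, I first lift it continuously to $E$: since $\pi$ is a Serre fibration and $\Omega$ is contractible, the pulled-back submersion $g^*E\to\Omega$ admits a continuous section (here surjectivity of $\pi$ guarantees that the pulled-back fibres are nonempty), giving a continuous $h_0\colon\Omega\to E$ with $\pi\circ h_0=g$. Applying $\BOP$ of $\pi$ over the Stein manifold $\Omega$ turns $h_0$ into a holomorphic lift $h_1\colon\Omega\to E$ of $g$. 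Since $E$ enjoys $\CAP$, the holomorphic map $h_1$, defined near $K$, can be approximated uniformly on $K$ by entire maps $H\colon\C^n\to E$; then $\pi\circ H\colon\C^n\to B$ is entire and approximates $g$ on $K$, proving $\CAP$ for $B$.

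The harder direction is $B$ Oka $\Rightarrow E$ Oka. I would start with a holomorphic $f\colon\Omega\to E$ on a convex neighborhood $\Omega$ of $K\subset\C^n$ and set $g=\pi\circ f$. Using $\CAP$ of $B$, I approximate $g$ uniformly on $K$ by an entire map $\tilde g\colon\C^n\to B$. The crux is to produce an \emph{entire} lift of $\tilde g$ that is uniformly close to $f$ on $K$. The idea is first to correct $f$ near $K$: since $\tilde g$ is close to $g=\pi\circ f$ there, I connect them by a short homotopy near $K$ and lift it by the homotopy lifting property starting from $f$, obtaining a continuous lift of $\tilde g$ near $K$ that is close to $f$; a further application of $\BOP$ of $\pi$ makes this lift holomorphic near $K$. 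Extending it to a global continuous lift $\Phi_0\colon\C^n\to E$ of $\tilde g$ (again using contractibility of $\C^n$) and applying $\BOP$ of $\pi$ over $X=\C^n$ with approximation on the polynomially convex set $K$ yields an entire lift $F\colon\C^n\to E$ of $\tilde g$ that is close to $\Phi_0$, hence to $f$, on $K$. This establishes $\CAP$ for $E$.

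The main obstacle is exactly this matching step in the second direction: unlike the plain projection used in the first direction, the entire approximation $\tilde g$ can only be lifted back to $E$ up to homotopy, so one must use the Serre fibration property to correct the original map $f$ into a holomorphic lift of $\tilde g$ near $K$ \emph{before} invoking $\BOP$ over all of $\C^n$. Everything else—the existence of continuous sections over contractible sets, and the passage from continuous to holomorphic lifts with approximation—is routine once Definition \ref{def:Oka-map} and $\BOP$ are in hand.
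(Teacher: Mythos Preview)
Your proof is correct and follows exactly the route the paper intends: the paper simply states that the corollary is ``an immediate consequence of the definitions and of Theorem~\ref{th:CAP}'' without spelling out the argument, and what you have written is precisely the detailed verification via $\CAP$ using the two ingredients in Definition~\ref{def:Oka-map} (Serre fibration and $\BOP$/POP for $\pi$). Your two directions also mirror the sketch of Theorem~\ref{OP-fiberbundles}, with the fibre-bundle triviality replaced by the Serre lifting property and Corollary~\ref{OP:lifting} replaced by $\BOP$ of $\pi$; there is nothing to add.
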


Corollary \ref{SES-POP} implies the following result.

\begin{corollary}
\label{cor:Okamaps} {\rm \cite[Corollary 6.14.8]{FF:book}}
{\rm (i)} A holomorphic fiber bundle projection is an Oka map if and only if the fiber is an Oka manifold.         

\noindent {\rm (ii)} A stratified subelliptic submersion, or a stratified holomorphic fiber bundle with Oka fibers, is an Oka map if and only if it is a Serre fibration.
\end{corollary}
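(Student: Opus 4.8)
The plan is to read off both statements from Corollary~\ref{SES-POP} together with the definition of an Oka map (Definition~\ref{def:Oka-map}), according to which a holomorphic map $\pi\colon E\to B$ is an Oka map precisely when it is a Serre fibration and enjoys \POP. Since Corollary~\ref{SES-POP} guarantees that every stratified subelliptic submersion, and every stratified holomorphic fiber bundle with Oka fibers, already satisfies \POP, for such maps the \POP\ condition is automatic; hence being an Oka map reduces to being a Serre fibration. This immediately yields part~(ii), and it is also the engine behind the nontrivial direction of part~(i).

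For the implication in part~(i) that an Oka fiber forces the projection to be an Oka map, I would argue as follows. A holomorphic fiber bundle with Oka fiber enjoys \POP\ by Corollary~\ref{SES-POP}(ii). Moreover, a (locally trivial) fiber bundle projection is a Serre fibration: over any cube one trivializes the bundle using a Lebesgue number for the pullback of a trivializing cover and lifts cube by cube, so the homotopy lifting property with respect to cubes holds. Combining these two facts with Definition~\ref{def:Oka-map} shows that the projection is an Oka map.

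The only point requiring genuine work is the converse in part~(i): recovering the Oka property of the fiber $Y$ from the assumption that the projection $\pi\colon E\to B$ is an Oka map, hence enjoys \POP. First I would observe that \POP\ passes to preimages of open subsets: given an open $U\subset B$ and \POP-data $(X,X',K,P,P_0,f,F_0)$ for the restricted submersion $\pi\colon \pi^{-1}(U)\to U$, one may view $f$ as a map into $B$ and apply \POP\ of $\pi$; since $\pi\circ F_t=f$ and $f$ has image in $U$, every section in the resulting homotopy automatically takes values in $\pi^{-1}(U)$, so the conclusion descends to the restricted submersion. (This is the easy, global-to-local half of the localization principle of Theorem~\ref{localization-of-POP}.) Next I would choose a point $b_0\in B$ and a trivializing neighborhood $U$, so that $\pi^{-1}(U)\cong U\times Y$ with $\pi$ the first projection, and conclude that the trivial bundle $U\times Y\to U$ enjoys \POP.

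Finally, I would specialize \POP\ of the trivial bundle $U\times Y\to U$ to a constant base map. For an arbitrary reduced Stein space $X$, a closed subvariety $X'$, and a compact $\cO(X)$-convex set $K$, take $f\equiv u_0$ for some $u_0\in U$; then any lifting has the form $(u_0,g)$, and the constraint $\pi\circ F_t=f$ forces the $U$-component to remain equal to $u_0$ throughout the homotopy. Thus \POP\ of $U\times Y\to U$ with this choice of $f$ is literally \POP\ for the $Y$-valued component $g$, with the prescribed interpolation on $X'$ and approximation on $K$; this is exactly the parametric Oka property for maps $X\to Y$, so $Y$ is an Oka manifold by Definition~\ref{def:Oka}. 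The step I expect to be the main obstacle is not any single computation but making precise the two reductions above --- that \POP\ restricts to open preimages, and that a constant base map isolates the fiber direction --- since these are where the definition of \POP\ for maps must be unwound carefully; everything else is a direct appeal to Corollary~\ref{SES-POP} and the standard fact that fiber bundles are Serre fibrations.
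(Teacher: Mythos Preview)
Your proof is correct and follows the paper's approach, which simply says ``Corollary~\ref{SES-POP} implies the following result'' without further elaboration; you have faithfully filled in the details that the paper leaves implicit, especially for the converse direction in~(i).

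One minor simplification: your intermediate step of restricting \POP\ to a trivializing open set $U\subset B$ is unnecessary. If $\pi\colon E\to B$ satisfies \POP\ and you take $f\colon P\times X\to B$ to be the constant map $f\equiv b_0$ from the outset, then condition~(i) in Definition~\ref{def:POPmap} forces every $F_t$ to land in the single fiber $E_{b_0}\cong Y$; no trivialization or localization is needed, and \POP\ of $\pi$ with this constant $f$ is literally the parametric Oka property for maps $X\to Y$. This bypasses the appeal to the global-to-local half of Theorem~\ref{localization-of-POP} entirely.
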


\section{Methods to prove the Oka principle}
\label{sec:methods}
In this section we prove Theorem \ref{th:CAP} in the basic case, that is, for sections of holomorphic fiber bundles with CAP fibers over Stein manifolds, and without parameters. The general case is treated in \cite[Chap.\ 5]{FF:book}. 

We begin in \S\ref{ss:Stein} by recalling the properties of Stein manifolds that will be important for us. The monographs by Grauert and Remmert \cite{Grauert-Remmert}, by Gunning and Rossi \cite{Gunning-Rossi}, and by H\"ormander \cite{Hormander:SCV}, now considered classics, are still excellent sources for the theory of Stein manifolds and Stein spaces. Our exposition is influenced by \cite{FLsurvey} and \cite[Chap. 2]{FF:book}. We also mention a result of Eliashberg on constructing Stein structures on almost complex manifolds with a suitable handlebody structure, and we state the {\em soft Oka principle} (Theorem \ref{FS1:Main1}).

In \S \ref{ss:Cartanpair}--\S \ref{ss:sprays} we explain the technique of gluing holomorphic sprays of sections, a key ingredient in the proof. This method has already found numerous applications. The other main ingredient is the local analysis near a Morse critical point of a strongly plurisubharmonic exhaustion function; here we use the normal form  furnished by Lemma \ref{spsh-normal-form}, a local Mergelyan approximation theorem for manifold-valued maps, and a device for reducing the problem of extending a holomorphic section across a critical level to the noncritical case. Using these tools, we prove a basic version of Theorem \ref{th:CAP} in \S\ref{ss:proof}.

\subsection{Stein manifolds}
\label{ss:Stein}
The central concept of classical several complex variables is that of a {\em domain of holomorphy} --- a domain in $\C^n$ with a holomorphic function that does not extend holomorphically to any larger domain, even as a multivalued function.  By classical function theory, every domain in $\C$ is a domain of holomorphy.  One of the key discoveries that got several complex variables started at the turn of the 20th century is that this is far from true in higher dimensions.  

The notion of a {\em Stein manifold}, introduced by K.\ Stein in 1951 \cite{Stein1951} under the name of a {\em holomorphically complete complex manifold}, generalizes domains of holomorphy to the setting of manifolds.  There are at least four fundamentally different characterizations of Stein manifolds; the equivalence of any two of them is a nontrivial theorem. Stein's original definition, simplified by later developments, states that a complex manifold $X$ is {\it Stein} if it satisfies the following two axioms.
\begin{enumerate}
\item  Holomorphic functions on $X$ separate points: for any pair of distinct points $x,x'\in X$, there exists a holomorphic function $f\in\cO(X)$ with $f(x)\neq f(x')$.
\item  $X$ is {\em holomorphically convex}, that is, if $K\subset X$ is compact, then its $\cO(X)$-hull 
\[
	\wh K= \{x\in X\colon |f(x)| \le\sup_K |f|,\ \forall f\in\cO(X)\}
\] 
is also compact.  Equivalently, if $E\subset X$ is not relatively compact, then there is an $f\in\cO(X)$ such that $f|_E$ is unbounded.
\end{enumerate}

A domain in $\C^n$ is Stein if and only if it is a domain of holomorphy.  Every noncompact Riemann surface is Stein \cite{Behnke-Stein1}.  

Second, a connected complex manifold is Stein if and only if it is biholomorphic to a closed complex submanifold of $\C^m$ for some $m$.  Namely, submanifolds of $\C^m$ are clearly Stein. (The coordinate functions on $\C^m$, restricted to the  submanifold, satisfy the above definition. More generally, it is easy to see that a closed complex submanifold of a Stein manifold is itself Stein.)  Conversely, R.\ Remmert proved in 1956 that every connected Stein manifold $X$ admits a proper holomorphic embedding into $\C^m$ for some $m$.  In 1960--61, E.\ Bishop \cite{Bishop1961} and R.\ Narasimhan \cite{Narasimhan1960} independently showed that if $\dim_\C X=n$, then $m$ can be taken to be $2n+1$.  The optimal embedding result is that if $n\geq 2$, then $m$ can be taken to be $\left[3n/2\right]+1$.  This was conjectured by O.\ Forster in 1971 \cite{Forster1970}; he showed that for each $n$, no smaller value of $m$ works in general.  Forster's conjecture was proved in the early 1990s by Y.\ Eliashberg and M.\ Gromov \cite{Eliashberg-Gromov2} (following their much earlier paper \cite{Eliashberg-Gromov1}) and J.\ Sch\"urmann \cite{Schurmann}.  The proof relies on Gromov's Oka principle discussed in \S \ref{ss:ellipticsub} above. This problem is still open in dimension one: every open Riemann surface properly embeds into $\C^3$, but relatively few are known to embed, even non-properly, into $\C^2$.  For recent results in this direction, see \cite{Forstneric-Wold1,Forstneric-Wold3, Majcen-2009}.

Third, Stein manifolds are characterized by a cohomology vanishing property.  The famous Theorem B of Henri Cartan, proved in his seminar in the period 1951--4, states that $H^k(X;\mathscr F)=0$ for every coherent analytic sheaf $\mathscr F$ on a Stein manifold (or Stein space) $X$ and every $k\geq 1$.  The converse is easy.

Finally, Stein manifolds can be defined in terms of {\em plurisubharmonicity}, a notion introduced independently by P.\ Lelong and by K.\ Oka in 1942, which plays a fundamental role in complex analysis. Recall that the {\em Levi form} $\cL_\rho$ of a $\cC^2$ function $\rho$ on a complex manifold $X$ is a quadratic Hermitian form on the tangent bundle $TX$ that is given in any local holomorphic coordinate system $z=(z_1,\ldots,z_n)$ on $X$ by the {\em complex Hessian}:
\[
   \cL_\rho(z;v)=\sum_{j,k=1}^n \frac{\di^2 \rho(z)}{\di z_j\di\bar z_k}\, v_j\ol v_k,\quad v\in \C^n.
\]
The function $\rho$ is plurisubharmonic if $\cL_\rho\ge 0$, and is strongly plurisubharmonic if $\cL_\rho>0$; clearly the latter is an open condition. For an intrinsic definition of the Levi form, associated to the $(1,1)$-form $dd^c\rho=\I\, \di\dibar\rho$, see e.g.\ \cite[Sec.\ 1.8]{FF:book}.

Every Stein manifold $X$ admits many strongly plurisubharmonic functions, for example, those of the form $\rho=\sum_j |f_j|^2$, where the holomorphic functions $f_j\in \cO(X)$ are chosen so that their differentials span the cotangent space $T_x^* X$ at each point $x\in X$ and the series converges. It is easy to ensure in addition that $\rho$ is an {\em exhaustion function}, in the sense that for every $c\in \R$, the sublevel set $\{x\in X \colon \rho(x) \le c\}$ is compact.  Conversely, H.~Grauert proved in 1958 \cite{Grauert:Levi} that a complex manifold $X$ with a strongly plurisubharmonic exhaustion function $\rho:X\to\R$ is Stein; this is the solution of the Levi problem for manifolds. The main point is that the existence of a strongly plurisubharmonic exhaustion $\rho$ on a complex manifold $X$ implies the solvability of all consistent $\overline\partial$-equations on $X$. In particular, the set $\rho^{-1}(-\infty,c_1)$ is Runge in $\rho^{-1}(-\infty,c_2)$ for all real numbers $c_1<c_2$, from which the defining properties (1) and (2) of a Stein manifold easily follow. A quantitative treatment of the $\dibar$-problem, using Hilbert space methods, was given by L.\ H\"ormander in 1965 \cite{Hormander:ACTA,Hormander:SCV}.

In Oka theory we need to use a sup-norm bounded solution operator to the $\dibar$-equation on bounded strongly pseudoconvex Stein domains. Such a domain $D\Subset X$ in a complex manifold  $X$ is of the form $D=\{x\in U\colon \rho(x)<0\}$, where $\rho$ is a strongly plurisubharmonic function on an open set $U\supset \bar D$ and $d\rho(x)\ne 0$ for each $x\in bD=\{\rho=0\}$. A key geometric property of a strongly pseudoconvex domain is that it is locally near each boundary point biholomorphic to a convex domain in $\C^n$. 

Denote by $\cC^r_{0,1}(\bar D)$ the space of all $(0,1)$-forms of class $\cC^r$ on $\bar D$; for $r=1/2$ this is the H\"older class with H\"older exponent $1/2$. An operator $T$ in the following theorem is obtained as an integral kernel operator with holomorphic kernel; we refer to Henkin and Leiterer \cite{Henkin-Leiterer:TF} and Range and Siu \cite{Range-Siu1973}.

\begin{theorem}
\label{th:dibar-spsc}
For every bounded strongly pseudoconvex Stein domain $D$ with $\cC^2$ boundary in a complex manifold $X$ there exists a linear operator $T\colon \cC^0_{0,1}(D)\to \cC^{1/2}(D)$ such that, if $f\in \cC^0_{0,1}(\bar D)\cap \cC^1_{0,1}(D)$ and $\dibar f=0$ in $D$, then 
\[
    \dibar (Tf) = f,\quad     ||Tf||_{\cC^{1/2}(\bar D)} \le c_D\, ||f||_{\cC^{0}_{0,1}(\bar D)}.
\]
The constant $c_D$ can be chosen uniform for all domains sufficiently $\cC^2$-close to $D$. 
\end{theorem}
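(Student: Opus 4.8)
The plan is to realize $T$ as an integral kernel operator built from the Henkin--Ramirez support function, following the classical construction of Henkin, Ramirez, Grauert--Lieb, Kerzman, Lieb--Range and Range--Siu; see \cite{Henkin-Leiterer:TF} and \cite{Range-Siu1973}. The operator will have the schematic form $Tf(z)=\int_D f(\zeta)\wedge K(z,\zeta)$ for a kernel $K$ with an explicit, mild singularity along the diagonal, so that $Tf$ makes sense for every continuous $(0,1)$-form (giving the map $\cC^0_{0,1}(D)\to\cC^{1/2}(D)$), while the solution identity $\dibar(Tf)=f$ holds for $\dibar$-closed $f$.

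First I would produce a holomorphic support function for $D$. Since $D$ is \spsc\ with $\cC^2$ boundary, it is locally biholomorphic near each boundary point to a strictly convex domain, and the Levi polynomial of the defining function $\rho$ provides a local function $\Phi(z,\zeta)$, holomorphic in the free variable $z$, satisfying an estimate of the form
\[
	\mathrm{Re}\,\Phi(z,\zeta)\ \ge\ \rho(\zeta)-\rho(z)+c\,|z-\zeta|^2,\qquad c>0,
\]
for $z$ near $\zeta\in bD$. In particular $\Phi(z,\zeta)\ne 0$ whenever $z\in\bar D$, $\zeta\in bD$ and $z\ne\zeta$. Patching these local pieces into a globally defined support function on a neighborhood of the diagonal in $\bar D\times bD$ is achieved either by the original Henkin--Ramirez globalization or by correcting the holomorphic dependence with an auxiliary $\dibar$-problem solved via Hörmander $L^2$ estimates.

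Next I would assemble the corresponding Cauchy--Fantappiè data. The support function furnishes a Leray section $w(z,\zeta)$ with $\sum_j w_j(z,\zeta)(\zeta_j-z_j)=\Phi(z,\zeta)$ holomorphic in $z$, and the associated Koppelman--Leray--Henkin homotopy formula expresses any $(0,1)$-form $f$ on $\bar D$ as
\[
	f=\dibar(Tf)+R(\dibar f),\qquad Tf=\int_D f\wedge K_0+\int_{bD} f\wedge K_1,
\]
where $K_0$ is built from the Bochner--Martinelli kernel over $D$ (handling the interior, where $\Phi$ is unavailable) and $K_1$ from the Henkin kernel over $bD$. For $\dibar$-closed $f$ the remainder term $R(\dibar f)$ vanishes, so $\dibar(Tf)=f$; the purely holomorphic boundary contribution drops out by the construction of $\Phi$. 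A cutoff blending the two kernels near the diagonal keeps the interior correction smoothing, so it does not affect the estimates.

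The decisive and most delicate step is the Hölder bound $\|Tf\|_{\cC^{1/2}(\bar D)}\le c_D\,\|f\|_{\cC^0_{0,1}(\bar D)}$. Here I would decompose the kernel according to the anisotropic geometry of $bD$, treating the complex tangential and the normal directions separately, in which regime the denominator behaves like $|\rho(z)|+|\mathrm{Im}\,\Phi(z,\zeta)|+|z-\zeta|^2$. The precise vanishing order of $\Phi$ transverse to the boundary is exactly what produces the gain of half a derivative: controlling the difference quotients $|Tf(z)-Tf(z')|/|z-z'|^{1/2}$ reduces to showing that a finite collection of model integrals $\int_D |z-\zeta|^{-a}|\Phi(z,\zeta)|^{-b}\,dV(\zeta)$ is uniformly bounded. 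This is the standard but technically heavy Henkin--Romanov and Range--Siu estimation, and it is where I expect the main work to lie. Finally, uniformity of $c_D$ over domains $\cC^2$-close to $D$ follows because the support function, the Leray section, and every kernel bound depend only on finitely many $\cC^2$-norms of the defining functions, which vary continuously; one then takes a common constant over a $\cC^2$-neighborhood of $\rho$.
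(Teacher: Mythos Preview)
Your outline is correct and matches the approach the paper has in mind: the paper does not actually prove this theorem but merely states it and refers to Henkin--Leiterer \cite{Henkin-Leiterer:TF} and Range--Siu \cite{Range-Siu1973}, noting that $T$ is obtained as an integral kernel operator with holomorphic kernel. Your sketch of the Henkin--Ramirez support function, the Koppelman--Leray--Henkin homotopy formula, and the anisotropic H\"older estimates is precisely the content of those references, so there is nothing to compare.
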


The definition of a complex space being Stein is the same as for manifolds.  Theorem B of Cartan still holds.  The property of having a strongly plurisubharmonic exhaustion function is still equivalent to being Stein (Narasimhan 1962, \cite{Narasimhan1962}). Finally, a Stein complex space with a bound on the local embedding dimension is biholomorphic to a complex subspace of a Euclidean space (Narasimhan 1960, \cite{Narasimhan1960}). 
Often the most efficient way to show that a complex space is Stein is to find a strongly plurisubharmonic exhaustion on it.  This is how Y.-T.\ Siu proved in 1976 that a Stein subvariety of a reduced complex space has a basis of open Stein neighborhoods \cite{Siu1976}.  Stein neighbourhood constructions often allow us to transfer a problem on a complex space to a Euclidean space where it becomes tractable.  A recent example is the application of the Stein neighbourhood construction in \cite{Forstneric-Wold2} to the proof in \cite{FF:OkaManifolds} that the basic Oka property implies the parametric Oka property for manifolds; see \cite[Chap.\ 3]{FF:book} for more on this subject.

The Morse index of a nondegenerate critical point of a strongly plurisubharmonic function on a complex manifold $X$ is at most $n=\dim X$.  In fact, the quadratic normal form of such a function in local coordinates is given by Lemma \ref{spsh-normal-form} below. By Morse theory, it follows that a Stein manifold $X$ of complex dimension $n$ has the homotopy type of a CW complex of real dimension at most $n$. This observation has a highly nontrivial converse, Eliashberg's topological characterization of Stein manifolds of dimension at least $3$ \cite{Eliashberg1}:  If $(X,J)$ is an almost complex manifold of complex dimension $n\geq 3$, which admits a Morse exhaustion function $\rho:X\to\R$ all of whose Morse indices are at most $n$, then $J$ is homotopic to an integrable Stein structure $\wt J$ on $X$ in which the sublevel sets of $\rho$ are strongly pseudoconvex.  

Eliashberg's result fails in dimension $2$.  The simplest counterexample is the smooth 4-manifold $S^2\times\R^2$, which satisfies the hypotheses of the theorem but carries no Stein structure by Seiberg-Witten theory.  Namely, $S^2\times\R^2$ contains embedded homologically nontrivial spheres $S^2\times\{c\}$, $c\in\R^2$, with self-intersection number $0$, while the adjunction inequality shows that any homologically nontrivial smoothly embedded sphere $C$ in a Stein surface has $C\cdot C\leq -2$ \cite[\S 9.8]{FF:book}. Nevertheless, $S^2\times \R^2$ still admits {\em exotic Stein structures}. More precisely, R.\ Gompf has shown that a topological oriented 4-manifold is homeomorphic to a Stein surface if (and only if) it is the interior of a topological handlebody with only $0$-, $1$-, and $2$-handles  \cite{Gompf1,Gompf2}.  In particular, there are Stein surfaces homeomorphic to $S^2\times\R^2$. See \cite[\S 9.12]{FF:book} for an exposition.

The Eliashberg-Gompf theorem has the following extension, due to Forstneri\v c and Slapar \cite[Theorem 1.1]{FSlapar1}, which shows that the Oka principle holds for maps to an arbitrary complex manifold, provided that we are allowed to deform not only the map, but also the Stein structure on the source manifold. 

%
%
%
%
\begin{theorem}
\label{FS1:Main1}
{\rm (The Soft Oka Principle)} 
Let $(X,J)$ be an almost complex manifold of dimension $n$ which admits a Morse exhaustion function $\rho\colon X\to\R$ with all Morse indices $\le n$. Let $f\colon X\to Y$ be a continuous map to a complex manifold~$Y$. 
\begin{itemize}
\item[\rm (i)] If \ $\dim_\C X \ne 2$, then there exist an integrable Stein structure $\wt J$ on $X$, homotopic to $J$, and 
a $\wt J$-holomorphic map $\wt f\colon X\to Y$ homotopic to~$f$.
\item[\rm (ii)] 
If $\dim_\C X =2$, there exist an orientation preserving homeomorphism $h\colon X\to X'$ onto a Stein surface $X'$ and a holomorphic map 
$f'\colon X' \to Y$ such that the map $\wt f= f'\circ h \colon X\to Y$ is homotopic to $f$.
\end{itemize}
Furthermore, a family of maps $f_p\colon X\to Y$, depending continuously on the parameter $p$ in a compact Hausdorff space, can be  deformed to a family of holomorphic maps with respect to some Stein structure $\wt J$ on $X$ that is homotopic to $J$.    
\end{theorem}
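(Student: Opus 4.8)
The plan is to split the statement into its two ingredients --- producing the Stein structure $\wt J$, and producing the $\wt J$-holomorphic map $\wt f$ --- and to exploit the fact that \emph{both} are at our disposal, which is exactly what lets us dispense with any Oka property of the arbitrary target $Y$. I would first fix the complex structure. In complex dimension one the statement is classical: every open surface is Stein, and a prescribed free homotopy class is realized by choosing the conformal type of $X$ appropriately (in the simplest case by an annular covering), so I concentrate on complex dimension $n\ge 3$. There Eliashberg's topological characterization of Stein manifolds (stated in \S\ref{ss:Stein}) lets me homotope $J$ to an integrable Stein structure $\wt J_0$ carrying a strongly plurisubharmonic Morse exhaustion $\rho$ all of whose critical points have index $\le n$. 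The index bound is what matters geometrically: in the normal form for $\rho$ near a Morse critical point the descending (stable) manifolds are \emph{totally real}, so the union $K$ of the descending disks is a totally real CW-skeleton of real dimension $\le n$ onto which $X$ deformation retracts. In complex dimension two Eliashberg's theorem fails; there I would instead invoke the Eliashberg--Gompf theorem to produce a Stein surface $X'$ with an index-$\le 2$ handle decomposition and an orientation-preserving homeomorphism $h\colon X\to X'$, run the rest of the argument on $X'$, and pull back by $h$ --- which is precisely why conclusion (ii) is weaker (only a homeomorphism, owing to the Seiberg--Witten obstructions recalled in \S\ref{ss:Stein}).

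The heart of the matter is to build the map near the totally real skeleton. After homotoping $f$ to a smooth map, I would approximate $f|_K$, uniformly on $K$, by a map $g$ holomorphic in an open neighborhood of $K$, proceeding by induction over the handle filtration. Each handle contributes a totally real core disk $D$ whose boundary sphere lies where $g$ has already been constructed and is holomorphic; I extend across $D$ using the manifold-valued Mergelyan approximation theorem for totally real sets (the tool foreshadowed in \S\ref{sec:methods}) and glue successive pieces by the spray and Cartan-pair patching described there. Since the approximation is $C^0$-small, $g$ is homotopic to $f$ near $K$.

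Finally I would promote $g$ to a global solution \emph{without any noncritical extension}. Shrinking the neighborhood, I take $\Omega$ to be a strongly pseudoconvex, hence Stein, regular neighborhood of $K$; by uniqueness of regular neighborhoods $\Omega$ is diffeomorphic to $X$ via a diffeomorphism $\Phi\colon X\to\Omega$ that is smoothly isotopic to $\mathrm{id}_X$. Setting $\wt J=\Phi^{*}(\wt J_0|_\Omega)$ then gives a Stein structure homotopic to $\wt J_0$, and hence to $J$, while $\wt f=g\circ\Phi$ is $\wt J$-holomorphic and, via the retraction of $X$ onto $K$, homotopic to $f$. This is exactly the device that replaces the role of CAP in Theorem \ref{th:CAP}: rather than extending a holomorphic map across long noncritical slabs --- which would force $Y$ to be Oka --- we simply declare the new Stein structure to be that of a thin neighborhood of $K$ on which $f$ is already essentially holomorphic. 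The parametric claim follows by running the same scheme with a parametric Mergelyan approximation, keeping a single structure $\wt J$ and a single $\Phi$ chosen uniformly over the compact parameter space.

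The main obstacle is the manifold-valued Mergelyan step for an \emph{arbitrary} target $Y$: since $Y$ carries no spray, the local holomorphic approximations must be produced in charts of $Y$ and then patched into a single globally defined $Y$-valued map across the attaching regions of successive handles, all while keeping the corrections small enough to preserve the homotopy class and, in the parametric case, continuous in the parameter. The remaining delicate input is arranging the descending manifolds of $\rho$ to be genuinely totally real, which is the nontrivial content of Eliashberg's construction; its failure in real dimension four is exactly what forces the weaker, homeomorphism-level conclusion (ii).
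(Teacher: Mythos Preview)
The paper does not prove Theorem~\ref{FS1:Main1}; it is only stated, with attribution to Forstneri\v c and Slapar \cite[Theorem 1.1]{FSlapar1}. So there is no in-paper proof to compare against. That said, your sketch captures the essential mechanism of the original argument: produce a Stein structure via Eliashberg (or Eliashberg--Gompf in dimension two), use the totally real core skeleton $K$ of the handle decomposition, approximate $f$ near $K$ by a holomorphic map via the manifold-valued Mergelyan theorem, and then --- the crucial point --- replace the noncritical extension step (which would need $Y$ to be Oka) by shrinking the Stein structure onto a thin neighborhood of $K$. You have correctly identified why the argument works for arbitrary $Y$.

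Two points deserve sharpening. First, the reference to ``spray and Cartan-pair patching'' in the handle induction is misplaced: the Mergelyan theorem (as in \cite[Theorem 3.7.2]{FF:book}) already produces a single holomorphic map on a neighborhood of the union $\{\rho\le c\}\cup D$, so no separate gluing is required at that stage. Second, your final step --- fix $\wt J_0$ once, build $g$ on a neighborhood of the full skeleton $K$, then shrink via a single diffeomorphism $\Phi\colon X\to\Omega$ --- is cleaner than the original but hides a convergence issue. The skeleton $K$ is noncompact (the exhaustion may have infinitely many critical points), so the Mergelyan approximations form an infinite sequence whose domains of holomorphy may shrink; you need to control this so that a common neighborhood of $K$ survives, and so that the regular-neighborhood diffeomorphism $\Phi$ exists and is isotopic to the identity in the noncompact setting. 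The proof in \cite{FSlapar1} sidesteps this by interleaving the two operations: at each inductive step one modifies the Stein structure so that the current domain of holomorphy becomes a genuine sublevel set of a new strongly plurisubharmonic exhaustion, and then passes to the limit. Your decoupled version can likely be made rigorous, but the limiting argument needs to be spelled out rather than left implicit.
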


\subsection{Cartan pairs and convex bumps}
\label{ss:Cartanpair}
The main references for this subsection are sections 5.7 and 5.10 in \cite{FF:book}. Recall that a compact set $K$ in a complex space $X$ is a {\em Stein compactum} if $K$ admits a basis of open Stein neighborhoods in $X$. The following definition combines Def.\ 5.7.1 and 5.10.2 in \cite{FF:book}, suitably simplified for our present needs. 
 
%
%
%
%
\begin{definition}
\label{Cartan-pair}
{\rm (I)}
A pair $(A,B)$ of compact subsets in a complex space $X$ is said to be a {\em Cartan pair} if it satisfies the following two conditions:    
\begin{itemize}
\item[\rm(i)]  the sets $A$, $B$, $D=A\cup B$ and $C=A\cap B$ are Stein compacta, and
\item[\rm(ii)]  $A,B$ are {\em separated}, in the sense that $\overline{A\bs B}\cap \overline{B\bs A} =\varnothing$.
\end{itemize}
{\rm (II)}
A pair $(D_0,D_1)$ of open relatively compact subsets of a complex manifold $X$ is said to be a {\em \SPCP\ of class $\cC^\ell$} $(\ell\ge 2)$ if $(\bar D_0,\bar D_1)$ is a Cartan pair in $X$, and each of the sets $D_0$, $D_1$, $D=D_0\cup D_1$, and $D_{0,1}=D_0\cap D_1$ is a \spsc\ Stein domain with $\cC^\ell$ boundary.
\item{\rm (III)}
A Cartan pair $(A,B)$ in a complex space $X$ is said to be a {\em special Cartan pair}, and the set $B$ is a {\em convex bump on $A$}, if $B\subset X_{\reg}$ and there exist holomorphic coordinates in a neighborhood $U\subset X_{\reg}$ of $B$ in which the sets $B$, $A\cap B$, and $U\cap (A\cup B)$ are strongly convex with $\cC^2$ boundaries in $U$. (See Fig.\ 5.2 in \cite[p.\ 219]{FF:book}.)
\end{definition}

Every Cartan pair in a complex manifold can be approximated from the outside by smooth strongly pseudoconvex Cartan pairs as in the following proposition.

%
%
%
%
\begin{proposition}
\label{SmoothingCP}
{\rm \cite[Proposition 5.7.3, p.\ 210]{FF:book}} Let $(A,B)$ be a Cartan pair in a complex manifold $X$. Given open sets $U\supset A$, $V\supset B$ in $X$, there exists a \SPCP\ $(D_0,D_1)$ of class $\cC^\infty$ satisfying $A\subset D_0\Subset U$ and $B\subset D_1\Subset V$.
\end{proposition}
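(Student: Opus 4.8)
The plan is to realize $D_0$ and $D_1$ as sublevel sets $\{\rho_0<0\}$ and $\{\rho_1<0\}$ of two \spsh\ functions built from a single function $\rho$ that already defines the union, by adding two nonnegative bumps with disjoint supports. First I would use that $D=A\cup B$ is a Stein compactum: it has a basis of Stein neighborhoods, so I choose a Stein neighborhood $\Omega\Subset U\cup V$ of $D$ and a smooth \spsh\ exhaustion $\tau$ of $\Omega$. Since $\tau$ is an exhaustion and $D$ is compact, $D\subset\{\tau<c\}\Subset\Omega$ for some $c$; after a generic choice of $c$ (Sard) the value is regular, and $\rho:=\tau-c$ is a \spsh\ exhaustion with $0$ a regular value and $D\subset D^*:=\{\rho<0\}\Subset U\cup V$. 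By Grauert's solution of the Levi problem recalled above, $D^*$ is a \spsc\ Stein domain; it will serve as the union $D_0\cup D_1$. Because $\ol{D^*}\subset U\cup V$, the two compacta $L_0:=\ol{D^*}\setminus U$ and $L_1:=\ol{D^*}\setminus V$ are disjoint, while $L_0\cap A=\varnothing$ and $L_1\cap B=\varnothing$ since $A\subset U$ and $B\subset V$.

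Next I would carve $D^*$ into the two domains by setting $\rho_0=\rho+\theta_0$ and $\rho_1=\rho+\theta_1$, where $\theta_0,\theta_1\ge 0$ are smooth, $\theta_i$ is supported in a neighborhood $N_i$ of $L_i$ with $N_0\cap N_1=\varnothing$, and $\theta_0$ (resp.\ $\theta_1$) is taken large enough on $L_0$ (resp.\ $L_1$) that $\{\rho_0<0\}$ omits $L_0$ and $\{\rho_1<0\}$ omits $L_1$. Writing $D_0=\{\rho_0<0\}$, $D_1=\{\rho_1<0\}$, one checks $A\subset D_0\Subset U$ and $B\subset D_1\Subset V$: indeed $\theta_0$ vanishes on $A$ and $\ol{D_0}\subset\ol{D^*}\setminus L_0\subset U$, and symmetrically for $D_1$. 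The construction hinges on two elementary identities coming from $\supp\theta_0\cap\supp\theta_1=\varnothing$: at each point at most one $\theta_i$ is nonzero, whence
\[
  \min(\rho_0,\rho_1)=\rho,\qquad \max(\rho_0,\rho_1)=\rho_0+\rho_1-\rho=\rho+\theta_0+\theta_1 .
\]
The first gives $D_0\cup D_1=\{\min(\rho_0,\rho_1)<0\}=\{\rho<0\}=D^*$, so the union is exactly the \spsc\ domain $D^*$. The second gives $D_0\cap D_1=\{\rho+\theta_0+\theta_1<0\}$, and $\rho+\theta_0+\theta_1$ is \spsh\ because near any point it coincides with one of the \spsh\ functions $\rho_0,\rho_1,\rho$; hence the intersection is also a sublevel set of a \spsh\ function. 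After perturbing the four levels to regular values (Sard), all four boundaries are smooth and the domains are \spsc; being relatively compact sublevel sets of \spsh\ functions on the Stein manifold $\Omega$, they are Stein. For the separation condition I note $\ol{D_0}\setminus\ol{D_1}\subset\{\theta_1>0\}\subset\supp\theta_1$ and $\ol{D_1}\setminus\ol{D_0}\subset\supp\theta_0$, which are disjoint; thus $(\ol{D_0},\ol{D_1})$ is a Cartan pair and $(D_0,D_1)$ is the desired \SPCP\ of class $\cC^\infty$.

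The hard part will be constructing the bumps $\theta_0,\theta_1$ while keeping $\rho_0,\rho_1$ \spsh. By the maximum principle a nonconstant \psh\ function cannot have compact support, so $\theta_i$ itself cannot be \psh: its Levi form $\cL_{\theta_i}$ must be negative somewhere, and pushing the boundary off $L_i$ by a definite amount over the finite distance separating $L_i$ from $A$, $B$, and $L_{1-i}$ forces a bump of definite amplitude, hence definite negative curvature. I would resolve this by first replacing $\rho$ with $\chi\circ\rho$ for a convex increasing $\chi$ with large $\chi''$: this leaves $D^*$ and the sets $L_i$ unchanged but makes $\cL_\rho$ as large as desired on the relevant compact region, so the bounded negative part of $\cL_{\theta_i}$ is absorbed and $\cL_{\rho_i}=\cL_\rho+\cL_{\theta_i}>0$ there. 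This quantitative balancing of the bump size against the boosted Levi form is the one delicate point; once it is in place, everything else is formal bookkeeping with $\min$, $\max$, Sard's theorem, and Grauert's theorem. Note that the disjointness of the two bump supports, which is exactly what makes both the union and the intersection \spsc, is furnished by $\ol{D^*}\subset U\cup V$, and it is mirrored in the separation of the output pair verified above.
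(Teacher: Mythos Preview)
The paper does not prove this proposition; it merely cites \cite[Proposition 5.7.3]{FF:book}. So there is no paper proof to compare against, and I will assess your argument on its own merits.

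Your overall architecture is correct and elegant: carving $D^*=\{\rho<0\}$ by adding disjointly supported nonnegative bumps $\theta_0,\theta_1$ gives the identities $\min(\rho_0,\rho_1)=\rho$ and $\max(\rho_0,\rho_1)=\rho+\theta_0+\theta_1$, which immediately yield that $D_0\cup D_1=D^*$ is strongly pseudoconvex, that $D_0\cap D_1$ is a sublevel set of a function locally equal to one of $\rho,\rho_0,\rho_1$, and that the pair is separated. The containments $A\subset D_0\Subset U$, $B\subset D_1\Subset V$ and the Sard-type regularity adjustments are routine.

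The gap is in your resolution of the hard part. You propose to absorb the negative curvature of $\theta_i$ by replacing $\rho$ with $\chi\circ\rho$ for convex increasing $\chi$ with large $\chi''$. Two problems. First, the Levi form is $\cL_{\chi\circ\rho}=\chi'(\rho)\cL_\rho+\chi''(\rho)\,\I\partial\rho\wedge\bar\partial\rho$; the $\chi''$ term is only rank one, so large $\chi''$ alone cannot dominate a general negative Hermitian form $\cL_{\theta_i}$. You need $\chi'$ large. Second, and more seriously, making $\chi'$ large on the range of $\rho$ over $N_0$ forces $|\chi\circ\rho|$ to be large there as well (by the mean value theorem), so the bump $\theta_0$ must be scaled up by the same factor to keep $\chi\circ\rho+\theta_0>0$ on $L_0$, and then $\cL_{\theta_0}$ scales up by that factor too. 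For linear $\chi$ the two effects cancel exactly; for nonlinear $\chi$ you can check on simple examples (say $A=[-2,1]$, $B=[-1,2]$ in $\C$ with a round $D^*$) that no choice of $\chi$ breaks the circularity. What actually works is either to choose the initial $\rho$ and $D^*$ more carefully so that $\cL_\rho$ is already large relative to $|\rho|$ on $N_i$ (e.g.\ by making $D^*$ a thin tube around $A\cup B$), or to replace the additive bump by a regularized maximum with an auxiliary \spsh\ function, which preserves plurisubharmonicity automatically. Your sketch does not supply either of these, so the delicate point you flag remains genuinely open in your argument.
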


We also need to recall the following notion.

\begin{definition} 
\label{def:noncritical-extension}
Assume that $A\subset A'$ are compact strongly pseudoconvex domains in a complex manifold $X$. We say that $A'$ is a {\em noncritical \spsc\ extension} of $A$ if there exist a \spsh\ function $\rho$ in an open set $V\supset \overline{A'\bs A}$ in $X$, with $d\rho\ne 0$ on $V$, and real numbers $c<c'$ such that 
\[
    A\cap V =\{x\in V\colon \rho(x)\le c\}, \quad    A'\cap V=\{x\in V\colon \rho(x)\le c'\}.
\]
\end{definition}

The sets $A_t:=A\cup \{x\in V\colon \rho(x)\le t\}$ for $t\in [c,c']$ are an increasing family of strongly pseudoconvex domains with $A_c=A$ and $A_{c'}=A'$. If $A$ is Stein, then $\rho$ extends to $A$ as a strongly plurisubharmonic function, and all domains $A_t$ for $t\in [c,c']$ are Stein.

The following result is a simple consequence of the fact that a strongly pseudoconvex hypersurface is locally near each point strongly convex in an appropriately chosen system of local holomorphic coordinates. Similar results have been used before, in particular by Grauert and in the Andreotti-Grauert theory.

\begin{lemma}
\label{lem:extension-by-bumps}
{\rm \cite[Lemma 5.10.3, p.\ 218]{FF:book}}
Assume that $A\subset A'$ are compact strongly pseudoconvex domains in a complex manifold $X$ and $A'$ is a noncritical \spsc\ extension of $A$. There exists a finite sequence of compact \spsc\ domains $A=A_0\subset A_1 \subset \cdots \subset A_m=A'$ such that for every $k=0,1,\ldots,m-1$ we have $A_{k+1}=A_k\cup B_k$, where $B_k$ is a convex bump on $A_k$. In addition, given an open cover $\cU=\{U_j\}$ of $\overline{A'\bs A}$, we can choose the above sequence such that each $B_k$ is contained in some $U_{j(k)}$.  
\end{lemma}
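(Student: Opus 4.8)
The plan is to realize the noncritical extension as a finite composition of \emph{local} convex bumps, using the increasing family of sublevel sets together with the compactness of the shell $L:=\overline{A'\setminus A}$. First I would record that the sets $A_t:=A\cup\{x\in V\colon \rho(x)\le t\}$ $(t\in[c,c'])$ form an increasing family of \spsc\ Stein domains with $A_c=A$ and $A_{c'}=A'$ (as already noted after Definition~\ref{def:noncritical-extension}); since $d\rho\neq0$ on $V$, no critical level of $\rho$ is crossed, so the ``thickness'' of the shell between nearby levels is uniformly small. I would then subdivide $[c,c']$ as $c=\tau_0<\tau_1<\cdots<\tau_l=c'$ finely and reduce the statement to a single thin slab $\overline{A_{\tau_{i+1}}\setminus A_{\tau_i}}$, concatenating the bump sequences obtained for the individual slabs.

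The local model is where the cited fact enters. Since $\rho$ is \spsh\ with nonvanishing differential, near each $p\in L$ a strongly pseudoconvex level hypersurface of $\rho$ is strongly convex in suitably chosen holomorphic coordinates; after absorbing the holomorphic part of the complex Hessian one may in fact choose a coordinate ball $U_p\Subset V$ centered at $p$ in which $\rho$ is a strongly convex function, so that every sublevel set $\{\rho\le t\}\cap U_p$ is strongly convex. By compactness I would extract a finite subcover $U_1,\dots,U_N$ of $L$ by such balls, refined so that each $U_i$ is contained in a member $U_{j(i)}$ of the given cover $\cU$, together with slightly smaller concentric balls $U_i'\Subset U_i$ that still cover $L$.

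With these charts fixed, I would attach bumps one patch at a time. Starting from $\Omega=A_{\tau_i}$, at the $i$-th step I would choose a strongly convex defining function $\phi_i$ on $U_i$, agreeing with the current defining function of $\Omega$ outside $U_i$ and bulging the domain outward across $U_i'$ up to level $\tau_{i+1}$, and set $B_i:=\overline{\{\phi_i\le0\}}\setminus\Omega$. In the coordinates on $U_i$ the three sets $B_i$, $\Omega\cap B_i$ and $U_i\cap(\Omega\cup B_i)=\{\phi_i\le0\}\cap U_i$ are then strongly convex with $\cC^2$ boundary, so $B_i$ is a convex bump on $\Omega$ in the sense of Definition~\ref{Cartan-pair}(III); moreover $\Omega\cup B_i$ is again a \spsc\ Stein domain, and the separation and Stein-compactum requirements hold because the overhang of $B_i$ is separated from $\Omega\setminus B_i$ and all the sets in question are locally convex, hence Stein. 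Replacing $\Omega$ by $\Omega\cup B_i$ and running $i=1,\dots,N$ raises $A_{\tau_i}$ to $A_{\tau_{i+1}}$; concatenating over the slabs then produces the required sequence $A=A_0\subset\cdots\subset A_m=A'$, with each $B_k\subset U_{j(k)}\in\cU$.

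I expect the main obstacle to be maintaining the convex-bump conditions at \emph{every} stage. After the bumps in $U_1,\dots,U_{i-1}$ have been attached, the boundary of the current domain $\Omega$ near $U_i$ is no longer a clean level set of $\rho$ but may consist of faces of earlier bumps, and I must ensure that it is still strongly convex in the coordinates of $U_i$ so that $U_i\cap(\Omega\cup B_i)$ is strongly convex. This is handled by taking the charts small and the bumps thin, and by using that strong convexity is an open, $\cC^2$-stable condition: in a sufficiently small chart the coordinates are nearly affine, so a controlled $\cC^2$-small outward modification of a strongly convex boundary stays strongly convex. Coordinating the numbers $N$ and $l$ and the bump sizes so that all these convexity conditions remain simultaneously valid is the technical heart of the argument.
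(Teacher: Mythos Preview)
Your proposal is correct and follows the same approach as the paper. The paper itself does not give a detailed proof of this lemma; it merely remarks that the result ``is a simple consequence of the fact that a strongly pseudoconvex hypersurface is locally near each point strongly convex in an appropriately chosen system of local holomorphic coordinates'' and refers to \cite[Lemma~5.10.3]{FF:book} for details. You have identified exactly this local convexification as the core ingredient, and your outline---slicing into thin slabs, covering by convex charts subordinate to $\cU$, and attaching bumps chart by chart while controlling the $\cC^2$-stability of strong convexity---is precisely how the book proof proceeds.
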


\subsection{A splitting lemma}
\label{ss:splitting}
The classical {\em Cartan lemma} (see e.g.\ \cite[p.\ 88]{Grauert-Remmert}) states that, given a special configuration of compact sets $(A,B)$ in $\C^n$ (for example, a pair of cubes whose union and intersection are also cubes), every holomorphic map $\gamma\colon A\cap B\to G$ to a complex Lie group $G$ splits as a product $\gamma=\beta \cdot \alpha^{-1}$ of holomorphic maps $\alpha\colon A\to G$, $\beta\colon B \to G$. Cartan considered the case $G=GL_k(\C)$, but the same proof applies in general. The original application of Cartan's lemma was to glue a pair of syzygies (resolutions of a coherent analytic sheaf) over $A$ and $B$, thereby obtaining a syzygy over $A\cup B$. This is one if the main steps in the proof of Cartan's Theorems A and B for coherent analytic sheaves on Stein spaces. 

Later on, Grauert used Cartan's lemma to develop his Oka-Grauert theory. If a holomorphic principal fiber bundle over $A\cup B$ is trivial over both $A$ and $B$ individually, then by splitting the transition map over $A\cap B$ by using Cartan's lemma, we can precompose the trivialization maps over $A$ and $B$ by $\alpha$ and $\beta$, respectively. In this way we obtain new trivializations that match over $A\cap B$, so they amalgamate into a trivialization of the bundle over the union $A \cup  B$.

Cartan's lemma no longer suffices in the nonlinear setting of general Oka theory, in the absence of any special structure on the fibers. Instead we use a method of {\em gluing local holomorphic sprays}, which may for obvious reasons also be called {\em thick holomorphic sections}. In this section we present the relevant splitting lemma. It was first used in \cite{FF:CAP} and was later improved in \cite{BDF1,FF:manifolds}; the version presented here is taken from \cite[\S 5.8]{FF:book}. The main point is that the product splitting in Cartan's lemma is replaced by the {\em compositional splitting} in (\ref{eq:splitting}) below.

Given a compact subset $K$ in a complex manifold $X$ and an open set $W\subset \C^N$, we consider maps $\gamma \colon K\times W\to K\times \C^N$ of the form
\begin{equation}
\label{map-gamma}
     \gamma(x,w)=\bigl(x,\psi(x,w)\bigl),\quad x\in K, \ w\in W.
\end{equation}
We say that $\gamma \in \cA(K\times W)$ if $\gamma$ is continuous on $K\times W$ and holomorphic in $\mathring K \times W$. Let $\Id$ denote the identity map;  then 
\[
        \dist_{K \times W}(\gamma,\Id) =  \sup\{ |\psi(x,w)-w|\colon x\in K, \ w\in W\}.
\]

%
%
%
%

\begin{proposition}
\label{splitting}
{\rm \cite[Proposition 5.8.1, p.\ 211]{FF:book}}
Let $(D_0,D_1)$ be a \SPCP\ of class $\cC^2$ in a complex manifold $X$ (Def.\ \ref{Cartan-pair}). Set $D_{0,1}=D_0\cap D_1$ and $D=D_0\cup D_1$.
Given a bounded open convex set $0\in W\subset\C^N$ and a number $r\in(0,1)$, there is a $\delta>0$ satisfying the following. For every map $\gamma \colon \bar D_{0,1}\times W \to \bar D_{0,1}\times \C^N$ of the form (\ref{map-gamma}) and of class $\cA(D_{0,1}\times W)$, with $\dist_{\bar D_{0,1} \times W}(\gamma,\Id) <\delta$, there exist maps
\[
  \alpha_\gamma \colon \bar D_0 \times r W \to \bar D_0 \times \C^N, \quad
  \beta_\gamma \colon \bar D_1 \times r W \to \bar D_1 \times \C^N
\]
of the form (\ref{map-gamma}) and of class $\cA(D_0\times rW)$ and $\cA(D_1\times rW)$, respectively, depending smoothly on $\gamma$, such that $\alpha_{\Id}=\Id$, $\beta_{\Id}=\Id$, and
\begin{equation}
\label{eq:splitting}
   \gamma\circ\alpha_\gamma = \beta_\gamma    \quad {\rm on\ } \bar D_{0,1}\times rW.
\end{equation}
\end{proposition}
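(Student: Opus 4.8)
The plan is to linearize the compositional identity $\gamma\circ\alpha_\gamma=\beta_\gamma$ into an \emph{additive} (Cousin-type) splitting, solve that linear problem with uniform bounds using the bounded $\dibar$-solution operator of Theorem \ref{th:dibar-spsc}, and then recover the nonlinear statement by the implicit function theorem in Banach spaces, arranging the setup so that the linearization is the identity. First I would pass to fiber coordinates: writing $\gamma(x,w)=(x,\,w+c(x,w))$ with $c=\psi-\Id$, and seeking $\alpha_\gamma(x,w)=(x,w+a(x,w))$, $\beta_\gamma(x,w)=(x,w+b(x,w))$, the identity (\ref{eq:splitting}) becomes the vector-valued equation
\[
  b(x,w)-a(x,w)=c\bigl(x,\,w+a(x,w)\bigr),\qquad (x,w)\in\bar D_{0,1}\times rW.
\]
I would work in the Banach spaces $\cA(D_0\times rW)$, $\cA(D_1\times rW)$, $\cA(D_{0,1}\times rW)$ of $\C^N$-valued maps with the sup-norm. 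Since $W$ is open, bounded, and convex with $0\in W$, one has $\overline{rW}\Subset W$ for $r<1$, leaving room: if $\|a\|$ is small then $w+a(x,w)\in W$ whenever $w\in\overline{rW}$, so the right-hand side is defined.

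The heart of the argument is the bounded linear splitting. Using the separation property (ii) of the Cartan pair (Definition \ref{Cartan-pair}) I would choose a smooth $\chi$ near $\bar D$ with $\chi\equiv 0$ near $\overline{D_0\setminus D_1}$ and $\chi\equiv 1$ near $\overline{D_1\setminus D_0}$, and set $\tilde a=-\chi c$ on $\bar D_0$ and $\tilde b=(1-\chi)c$ on $\bar D_1$ (each extended by zero where the cutoff vanishes), so that $\tilde b-\tilde a=c$ on the overlap. Because $c$ is $x$-holomorphic on $D_{0,1}$, the forms $\dibar\tilde a$ and $\dibar\tilde b$ agree there and glue to a $\dibar$-closed $(0,1)$-form $g=-(\dibar\chi)\,c$ on $D=D_0\cup D_1$, depending holomorphically on the fiber parameter $w$. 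Applying the operator $T$ of Theorem \ref{th:dibar-spsc} on the strongly pseudoconvex domain $D$ fiberwise in $w$ yields $u=Tg\in\cC^{1/2}(\bar D)$ with $\|u\|\le c_D\|g\|\le M\|c\|$; then $\cS_0(c):=\tilde a-u$ and $\cS_1(c):=\tilde b-u$ are bounded linear operators into $\cA(D_0\times rW)$ and $\cA(D_1\times rW)$ with $\cS_1(c)-\cS_0(c)=c$ on the overlap. The $\cC^{1/2}$ estimate gives continuity up to the boundary, and the holomorphic dependence of $T$ on $w$ keeps us in class $\cA$.

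Finally I would apply the implicit function theorem. Define $\Phi(c',\gamma)\in\cA(D_{0,1}\times rW)$ by
\[
  \Phi(c',\gamma)(x,w)=\bigl(w+\cS_1(c')(x,w)\bigr)-\psi\bigl(x,\,w+\cS_0(c')(x,w)\bigr),
\]
so that a zero $c'$ produces a solution $(a,b)=\bigl(\cS_0(c'),\cS_1(c')\bigr)$ of the compositional identity. At $(c',\gamma)=(0,\Id)$ one has $\psi(x,w)=w$, hence $\Phi(0,\Id)=0$ and $D_{c'}\Phi(0,\Id)h=\cS_1(h)-\cS_0(h)=h$; the partial derivative is the identity, hence invertible. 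The implicit function theorem then yields, for $\gamma$ in a $\delta$-neighborhood of $\Id$, a unique small $c'=c'(\gamma)$ depending smoothly on $\gamma$ with $c'(\Id)=0$; setting $\alpha_\gamma=\Id+\cS_0(c'(\gamma))$ and $\beta_\gamma=\Id+\cS_1(c'(\gamma))$ gives the assertion, including $\alpha_{\Id}=\beta_{\Id}=\Id$ and the smooth dependence on $\gamma$.

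I expect the main obstacle to lie in the bookkeeping that makes this clean rather than in any single hard estimate. One must keep the three function spaces and the shrinkage from $W$ to $rW$ consistent so that every composition is defined and lands in $\cA$; verify the uniform $\dibar$-bound with boundary regularity on which the right-inverse trick depends; and choose $\delta$ small enough that $w+\cS_0(c')(x,w)$ stays in $W$ throughout the neighborhood supplied by the implicit function theorem. The genuine analytic input---a sup-norm/Hölder bounded solution operator for $\dibar$ on strongly pseudoconvex domains---is exactly what Theorem \ref{th:dibar-spsc} provides, which is why the nonlinear gluing reduces to a soft Banach-space argument.
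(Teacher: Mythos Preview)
Your proposal is correct and follows essentially the same route as the paper: reduce the compositional splitting to an additive Cousin splitting via a cutoff and the bounded $\dibar$-solution operator on $D$, then apply the implicit function theorem to a functional whose linearization at the identity is the identity. The only refinement in the paper that you leave implicit is the passage (via Cauchy estimates, shrinking $W$ to an intermediate $r_1W$) to a Banach norm on $\psi$ that controls $\partial_w\psi$, which is what makes $\Phi$ genuinely $C^1$ in $c'$; you correctly flag this as part of the bookkeeping in your final paragraph.
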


The proof gives a number of additions at no extra cost. For example, if $\gamma$ agrees with the identity to order $m\in\N$ along $w=0$, then so do $\alpha_\gamma$ and~$\beta_\gamma$. Furthermore, if $X'$ is a closed complex subvariety of $X$ such that $X'\cap \bar D_{0,1}=\varnothing$, then $\alpha_\gamma$ and $\beta_\gamma$ can be chosen tangent to the identity to any given finite order along $(X' \cap \bar D_0)\times rW$ and $(X' \cap \bar D_1)\times rW$, respectively.

\begin{proof}
Denote by $C_r$ and $\Gamma_r$ the Banach spaces of all continuous maps $\bar D_{0,1} \times rW \to \C^N$ which are holomorphic in $D_{0,1} \times rW$ and satisfy
\begin{eqnarray*}
			||\phi||_{C_r}     &=& \sup\{ |\phi(x,w)| \colon x \in \bar D_{0,1}, \ w\in rW \}  < +\infty, \\
     ||\phi||_{\Gamma_r} &=& \sup\{  \bigl( |\phi(x,w)| + |\di_w \phi(x,w)| \bigr) \colon 
     x \in \bar D_{0,1}, \ w\in rW \} < +\infty.
\end{eqnarray*}
%
%
Similarly we denote by $A_{r}$, $B_{r}$ the Banach space of all continuous maps $\bar D_0\times r W \to \C^N$, $\bar D_1\times r W \to \C^N$, respectively, 
that are holomorphic in the interior. 

By the hypothesis we have $\gamma(x,w)=(x,\psi(x,w))$ with $\psi\in C_{1}$. Set $\psi_0(x,w)=w$. Choose a number $r_1 \in (r,1)$. By the Cauchy estimates, the restriction map $C_{1} \to \Gamma_{r_1}$ is continuous;  hence $\psi|_{\bar D_{0,1} \times r_1 W} \in \Gamma_{r_1}$ and
$||\psi-\psi_0||_{\Gamma_{r_1}} \le \mathrm{const} ||\psi-\psi_0||_{C_1}$.

%
%
%
%
\begin{lemma}
\label{linear-splitting}
There are bounded linear operators $\Agoth \colon C_{r} \to A_{r}$, $\Bgoth \colon C_{r}\to B_{r}$, satisfying
\begin{equation}
\label{left-right}
        c = \Agoth c - \Bgoth c,\qquad c\in C_{r}.
\end{equation}
If $c$ vanishes to order $m\in\N$ at $w=0$ then so do $\Agoth c$ and $\Bgoth c$. If $X'$ is a closed complex subvariety of $X$ such that $X'\cap \bar D_{0,1}=\varnothing$, then $\Agoth c$ and $\Bgoth c$ can be chosen to vanish to any given finite order along $(X' \cap \bar D_0)\times rW$ and $(X' \cap \bar D_1)\times rW$, respectively.
\end{lemma}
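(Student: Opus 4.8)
The plan is to replace Cartan's multiplicative splitting by an \emph{additive} one that can be solved by $\dibar$-techniques, the whole gain being that the solution operator $T$ of Theorem \ref{th:dibar-spsc} is linear and bounded. First I would exploit the separation condition in Definition \ref{Cartan-pair}(ii): since the compact sets $\overline{D_0\bs D_1}$ and $\overline{D_1\bs D_0}$ are disjoint, I fix once and for all a smooth function $\chi\colon X\to[0,1]$ with $\chi\equiv 1$ on a neighborhood of $\overline{D_1\bs D_0}$ and $\chi\equiv 0$ on a neighborhood of $\overline{D_0\bs D_1}$. Given $c\in C_{r}$, I set $a_s=\chi\,c$ and $b_s=(\chi-1)\,c$ on $\bar D_{0,1}\times rW$ and extend both by $0$ to smooth functions (in $x$, holomorphic in $w$) on $\bar D_0\times rW$ and $\bar D_1\times rW$; this is legitimate because $\chi$ vanishes near $\overline{D_0\bs D_1}$ and $\chi-1$ vanishes near $\overline{D_1\bs D_0}$, and one has $a_s-b_s=c$ on the overlap. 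The decisive observation is that on $\bar D_{0,1}\times rW$ one has $\dibar_x a_s=\dibar_x b_s=(\dibar_x\chi)\,c=:g$, since $c$ is holomorphic in $x$; and because $\dibar\chi$ is supported in a compact subset $L\Subset D_{0,1}$, the form $g$ extends by $0$ to a $\dibar$-closed $(0,1)$-form of class $\cC^0_{0,1}(\bar D)\cap\cC^1_{0,1}(D)$ on all of $D=D_0\cup D_1$, with $\|g\|_{\cC^0_{0,1}(\bar D)}\le \mathrm{const}\,\|c\|_{C_r}$.

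Next I would invoke Theorem \ref{th:dibar-spsc}, applicable because $D$ is a bounded strongly pseudoconvex Stein domain, to obtain $u:=Tg$ with $\dibar u=g$ and $\|u\|_{\cC^{1/2}(\bar D)}\le c_D\,\|g\|_{\cC^0_{0,1}(\bar D)}$, and then define
\[
   \Agoth c = a_s - u|_{\bar D_0\times rW}, \qquad \Bgoth c = b_s - u|_{\bar D_1\times rW}.
\]
Then $\dibar_x(\Agoth c)=g-g=0$ on $D_0$ and likewise on $D_1$, so $\Agoth c\in A_{r}$ and $\Bgoth c\in B_{r}$, while $\Agoth c-\Bgoth c=(a_s-b_s)-(u-u)=c$ on $\bar D_{0,1}\times rW$, which is (\ref{left-right}). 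Linearity is immediate, and boundedness follows from the estimate on $g$ together with $\sup|a_s|\le\|c\|_{C_r}$ and the uniform constant $c_D$. The parameter $w$ is inert throughout: $\chi$ and $T$ act only in the $x$-variable, so the construction is carried out fiberwise in $w$, and since $T$ is independent of $w$ it commutes with $\dibar_w$ and with extracting Taylor coefficients in $w$; hence $u=Tg$ depends holomorphically on $w$, and if $c$ vanishes to order $m$ at $w=0$ then so do $g$, $u$, and therefore $\Agoth c$ and $\Bgoth c$.

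Finally, for the vanishing along a closed subvariety $X'$ with $X'\cap\bar D_{0,1}=\varnothing$, I would first note that $X'\cap\bar D_0\subset\overline{D_0\bs D_1}$ and $X'\cap\bar D_1\subset\overline{D_1\bs D_0}$ (using $\bar D_0=\bar D_{0,1}\cup\overline{D_0\bs D_1}$), so by the choice of $\chi$ the functions $a_s$ and $b_s$ already vanish identically near $X'$; it remains only to arrange that $u=Tg$ vanish to the prescribed finite order $k$ along $X'\cap\bar D$. Since $g\equiv 0$ near $X'$, $u$ is holomorphic there and its $k$-jet along $X'\cap D$ is a holomorphic jet; using that $D$ is Stein I would subtract from $u$ a function $v\in\cO(D)$ with the same $k$-jet along $X'\cap D$, supplied with bounds and linear dependence on the data by a bounded linear jet-extension operator on the strongly pseudoconvex domain $D$, and replace $u$ by $u-v$. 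This changes neither $\dibar u$ nor the splitting identity, and makes both $\Agoth c$ and $\Bgoth c$ vanish to order $k$ along $X'$. The main obstacle I anticipate is precisely this bookkeeping: checking that $g$ extends as a $\dibar$-closed form of the regularity required by Theorem \ref{th:dibar-spsc}, and producing the jet-correction $v$ as a genuinely \emph{bounded linear} operation so that $\Agoth,\Bgoth$ remain bounded; the analytic heart, namely the existence of a sup-norm bounded $\dibar$-solution operator, is already furnished by Theorem \ref{th:dibar-spsc}.
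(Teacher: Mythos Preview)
Your core argument—the cutoff $\chi$, the smooth splitting $a_s=\chi c$, $b_s=(\chi-1)c$, and the $\dibar$-correction $u=Tg$ with $g=c\,\dibar\chi$—is exactly the paper's approach, and your treatment of holomorphy in $w$ and of vanishing to order $m$ at $w=0$ is correct.

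The one genuine divergence is the handling of the subvariety $X'$. You propose to correct $u$ \emph{a posteriori} by subtracting a holomorphic $v$ with the same $k$-jet along $X'\cap D$, produced by a bounded linear jet-extension operator. The difficulty you flag is real: nothing rules out $X'\cap bD\neq\varnothing$, and there $u=Tg$ is only $\cC^{1/2}$ up to the boundary, so its $k$-jet along $X'$ need not be uniformly controlled, and a bounded extension operator for such jets is not immediate. The paper avoids this entirely by building the vanishing into the solution operator: choose $f_1,\dots,f_m\in\cO(X)$ vanishing to order $k$ along $X'$ with no common zeros on $\bar D_{0,1}$, use Cartan division to write $\sum_j f_j g_j=1$ on a neighborhood of $\bar D_{0,1}$, and replace $T(c\,\dibar\chi)$ by $\sum_j f_j\,T(g_j\,c\,\dibar\chi)$. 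This still solves $\dibar u=c\,\dibar\chi$ (since $\sum f_j g_j=1$ on the support of $\dibar\chi$), is manifestly bounded and linear, and the factor $f_j$ forces vanishing to order $k$ along $X'$ on all of $\bar D$, with no boundary issues. Your approach could likely be made to work with additional effort, but the paper's device is both simpler and more robust.
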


\begin{proof}
By condition (ii) in Def.\ \ref{Cartan-pair} there is a smooth function $\chi \colon X \to [0,1]$ such that $\chi=0$ in a neighborhood of $\overline {D_0\backslash D_1}$, and $\chi=1$ in a neighborhood of $\overline{D_1\bs D_0}$. For any $c\in C_r$, the product $\chi(x) c(x,w)$ extends to a continuous function on $\bar D_0\times  W$ that vanishes on $\overline {D_0\bs D_1} \times  W$, and $(\chi(x)-1)c(x,w)$ extends to a continuous function on
$\bar D_1\times  W$ that vanishes on $\overline {D_1\bs D_0} \times  W$. Furthermore, 
$ 
	\dibar(\chi c)= \dibar((\chi-1)c)=c\dibar \chi 
$ 
is a $(0,1)$-form on $\bar D$ with continuous coefficients, with support in $\bar D_{0,1} \times W$ and depending holomorphically on $w\in W$.

Choose functions $f_1,\ldots, f_m\in \cO(X)$ that vanish to order $k$ along the subvariety $X'$ and have no common zeros on $\bar D_{0,1}$. Since $D_{0,1}$ is strongly pseudoconvex, Cartan's division theorem gives holomorphic functions $g_1,\ldots,g_m$ in a neighborhood of $\bar D_{0,1}$ such that $\sum_{j=1}^m f_j g_j=1$. Let $T\colon \cC^0_{0,1}(D)\to \cC^0(D)$ be a bounded linear solution operator furnished by Theorem \ref{th:dibar-spsc}. For any $c \in C_r$ and $w\in rW$ we set 
\begin{eqnarray*}
    (\Agoth c)(x,w) &=& \chi(x)\, c(x,w) - 
    \sum_{j=1}^m f_j(x)\, T\bigl( g_j\, c(\cdotp,w) \, \dibar \chi\bigr) (x),
        \qquad\ \quad   x\in\bar D_0, \\
    (\Bgoth c)(x,w) &=& \bigl(\chi(x)-1\bigr) c(x,w)  
    		-  \sum_{j=1}^m f_j(x) \, T\bigl(g_j\, c(\cdotp,w)\, \dibar \chi\bigr) (x),
        		\quad x\in\bar D_1.
\end{eqnarray*}
Then $\Agoth c-\Bgoth c=c$ on $\bar D_{0,1}\times rW$, and $\dibar_x (\Agoth c)=0$, $\dibar_x(\Bgoth c)=0$ in the interior of their respective domains. 
Since $\dibar_w \left( c(x,w) \,\dibar \chi(x)\right)=0$ and $T$ commutes with $\dibar_w$, we also have $\dibar_w (\Agoth c)=0$ and $\dibar_w (\Bgoth c)=0$.
The estimates follow from boundedness of $T$. 
\end{proof}

Consider the following operator, defined for $\psi\in \Gamma_{r_1}$ near $\psi_0$ and $c\in C_{r}$ near $0$:
\[
        \Phi(\psi,c)(x,w) = \psi\bigl(x,w+(\Agoth c)(x,w)\bigr) - \bigl(w+(\Bgoth c)(x,w)\bigr),\quad 
        x\in \bar D_{0,1},\ w\in rW.
\]
Then $(\psi,c)\mapsto \Phi(\psi,c)$ is a smooth map from an open neighborhood of $(\psi_0,0)$ in the Banach space $\Gamma_{r_1} \times C_{r}$ to the Banach space $C_{r}$. Indeed, $\Phi$ is linear in $\psi$, and we have
\[
     \di_c\, \Phi(\psi,c_0)c(x,w) =
     \di_c\, \psi\bigl(x, w+(\Agoth c_0)(x,w)\bigr) \cdotp (\Agoth c)(x,w) - (\Bgoth c)(x,w);
\]
this is again linear in $\psi$ and continuous in all variables. A similar argument applies to the higher order differentials of $\Phi$. By (\ref{left-right}) we have
\[
	\Phi(\psi_0,c)= \Agoth(c)-\Bgoth(c)=c,\quad c\in C_r,
\]
so $\di_c\, \Phi(\psi_0,0)$ is the identity map on $C_r$. By the implicit function theorem there exists a smooth map $\psi \mapsto \Cgoth(\psi) \in C_{r}$ in an open neighborhood of $\psi_0$ in $ \Gamma_{r_1}$ such that $\Phi(\psi,\Cgoth(\psi))=0$ and $\Cgoth(\psi_0)=0$. The maps
\[
        a_\psi(x,w) = w + \Agoth\circ \Cgoth(\psi)(x,w), \quad  b_\psi(x,w) = w + \Bgoth\circ \Cgoth(\psi)(x,w)
\]
then satisfy $a_\psi\in A_{r}$,  $a_{\psi_0}=\psi_0$, $b_\psi\in B_r$, $b_{\psi_0}=\psi_0$ and
\[
        \psi\bigl(x,a_\psi(x,w)\bigr) = b_\psi(x,w),\quad (x,w)\in \bar D_{0,1} \times rW.
\]
The associated holomorphic maps 
\[
	\alpha_\gamma (x,w) = \bigl(x,a_\psi(x,w)\bigr), \quad \beta_\gamma (x,w)  = \bigl(x,b_\psi(x,w)\bigr)
\]
 depend smoothly on $\gamma$ and satisfy Proposition \ref{splitting}.
\end{proof}


\subsection{Gluing holomorphic sprays of sections}
\label{ss:sprays}
We now apply Proposition \ref{splitting} to glue local holomorphic sprays over a Cartan pair. Our main reference is \cite[\S 5.9]{FF:book}.

Assume that $h \colon Z\to X$ is a holomorphic submersion of complex spaces. For each point $z\in Z$ we let $VT_z Z= \Ker \d h_z$ denote the vertical tangent space of $Z$ (i.e., the tangent space to the fiber $h^{-1}(h(z))$) at $z$.

%
%
%
\begin{definition}
\label{def:local-spray}
{\rm \cite[Def.\ 5.9.1, p.\ 215]{FF:book}}
Let $D$ be a domain in $X$. A {\em holomorphic spray of sections} over $D$ is a holomorphic map $f\colon D\times P\to Z$, where $P$ is an open set in some Euclidean space $\C^N$ containing the origin, such that $h(f(x,w))=x$ for every $x \in D$ and $w\in P$. The spray is said to be {\em dominating} on a set $K\subset D$ if the partial differential
\begin{equation}
\label{pispray}
    \di_w|_{w=0} f(x,w) \colon \T_0 \C^N \cong \C^N  \longrightarrow  VT_{f(x,0)} Z
\end{equation}
is surjective for all $x\in K$; $f$ is dominating if this holds for $K=D$. We call $f_0=f(\cdotp,0)$ the {\em central (core) section}  of the spray $f$. 

If $D \Subset X_\reg$ is a relatively compact domain with $\cC^1$ boundary then a {\em spray of sections of class $\cA^l(D)$} (with the parameter set $P\subset \C^N$) is a $\cC^l$ map $f\colon \bar D\times P\to Z$ that is holomorphic on $D\times P$ and satisfies the condition (\ref{pispray}).
\end{definition}

The following lemma shows that every holomorphic section over a Stein domain can be embedded into a dominating spray of sections on a slightly smaller domain.

%
%
%
%
\begin{lemma}
\label{local-spray}
{\rm \cite[Lemma 5.10.4]{FF:book}}
Assume that $h \colon Z\to X$ is a holomorphic submersion onto a complex space $X$, $X'$ is a closed complex subvariety of $X$, and $A_0 \subset A$ are Stein compacta in $X$ such that $A_0\cap  X'=\varnothing$. Given open Stein sets $V\Subset V_0\subset X$ containing $A$, a holomorphic section $f\colon V_0\to Z$ and an integer $r \in\N$,
there is a holomorphic spray of sections $F\colon V\times P \to Z$ for some open set $0\in P\subset \C^N$ such that
\begin{itemize}
\item[\rm (i)]  $F_w=F(\cdotp,w)$ is a section of $Z|_V$ for every fixed $w\in P$, with $F_0=f$,
\item[\rm (ii)]  $F_w$ agrees with $f$ to order $r$ along $V\cap X'$ for every $w\in P$, and
\item[\rm (iii)] $\di_w F(x,w)|_{w=0}\colon \C^N\to VT_{f(x)} Z$ is surjective for every $x\in A_0$.
\end{itemize}
If $A_0$ is $\cO(A)$-convex and it admits a contractible Stein neighborhood in $V_0 \bs X'$ then the above conclusion holds with $N =\dim h^{-1}(x)$ $(x\in A_0)$.
\end{lemma}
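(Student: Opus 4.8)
The plan is to realize the desired spray as $F(x,w)=s\big(x,\sum_{j=1}^{N}w_j\,\sigma_j(x)\big)$, where $s$ is a fiber-preserving ``exponential'' map defined near the image of $f$, and the $\sigma_j$ are holomorphic sections of the pulled-back vertical tangent bundle $E:=f^{*}VTZ\to V_0$ that span $E$ over $A_0$ and vanish to high order along $X'$. Note that $E$ is a holomorphic vector bundle of rank $n=\dim h^{-1}(x)$ over the Stein space $V_0$, so Cartan's Theorems A and B (cf.\ \S\ref{ss:Stein}) are available throughout; this is the source of all the algebra below. The two things to produce are (A) the map $s$ and (B) the sections $\sigma_j$; granting these, conditions (i)--(iii) will fall out by the chain rule.

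For (A), which I expect to be the main obstacle, I would first invoke Siu's theorem (cited in \S\ref{ss:Stein}) that the Stein submanifold $f(V_0)\cong V_0$ of $Z$ has an open Stein neighborhood $\Omega_Z\subset Z$. Over the Stein space $\Omega_Z$ the vertical tangent bundle $VTZ|_{\Omega_Z}$ is generated by finitely many global holomorphic vertical vector fields $W_1,\dots,W_k$ (Theorem A), which in particular span $VT_zZ$ at every $z\in\Omega_Z$. Each $W_i$ is tangent to the fibers of $h$, so its local flow $\phi_i^{t}$ is fiber-preserving, and the composite short-time flow
\[
	\tilde s(x,t)=\phi_1^{t_1}\circ\cdots\circ\phi_k^{t_k}\big(f(x)\big)
\]
is a holomorphic fiber-preserving map defined for $x$ in a Stein neighborhood $V_1$ of the compact set $\bar V$ (with $\bar V\subset V_1\Subset V_0$) and $t\in\C^k$ near $0$, with $\tilde s(x,0)=f(x)$ and, exactly as in Example \ref{example:elliptic}(B), $\di_{t}\tilde s(x,0)=\big(W_1(f(x)),\dots,W_k(f(x))\big)\colon\C^k\to E_x$ surjective. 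Since this is a surjective morphism of holomorphic vector bundles over the Stein manifold $V_1$, it admits a holomorphic right inverse $\mu\colon E\to\underline{\C^{k}}$ (the short exact sequence of bundles splits by vanishing of the relevant $H^1$, Theorem B). Setting $s(x,e)=\tilde s\big(x,\mu_x(e)\big)$ then gives a fiber-preserving map from a neighborhood of the zero section of $E|_{V_1}$ into $Z$ with $s(0_x)=f(x)$ and vertical derivative $\di_e s(0_x)=\mathrm{Id}_{E_x}$.

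For (B), let $\cI\subset\cO_{V_0}$ be the ideal sheaf of $X'$ and put $\cJ=\cI^{\,r+1}$, so that a section of the coherent sheaf $\cJ E$ vanishes to order $r+1$ along $X'$. By Theorem A, $\cJ E$ is generated over a neighborhood of $\bar V$ by finitely many global sections $\sigma_1,\dots,\sigma_N$; because $A_0\cap X'=\varnothing$ we have $\cJ E=E$ near $A_0$, so the $\sigma_j$ span $E_x$ for every $x\in A_0$. Now define $F(x,w)=s\big(x,\sum_j w_j\sigma_j(x)\big)$ on $V\times P$, where $P\ni0$ is chosen small enough (using boundedness of the $\sigma_j$ on $\bar V$) that the argument stays in the domain of $s$. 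Then $F$ is $h$-fiber-preserving with $F_0=f$, giving (i); $\di_{w_j}F(x,0)=\di_e s(0_x)(\sigma_j(x))=\sigma_j(x)$, which span $VT_{f(x)}Z$ for $x\in A_0$, giving (iii); and since $\sum_j w_j\sigma_j(x)$ vanishes to order $r+1$ along $X'$ while $s(\cdot,0)=f$, the chain rule shows $F_w-f$ vanishes to order $r+1$, hence agrees with $f$ to order $r$, along $V\cap X'$, giving (ii).

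Finally, for the optimal bound $N=n$, I would exploit the contractible Stein neighborhood $\Omega'\subset V_0\bs X'$ of $A_0$: over $\Omega'$ the bundle $E$ is holomorphically trivial, so it has a frame $\theta_1,\dots,\theta_n$, which are sections of $\cJ E$ on $\Omega'$ since $\Omega'\cap X'=\varnothing$. As $A_0$ is $\cO(A)$-convex, the Oka--Weil approximation theorem for sections of the coherent sheaf $\cJ E$ lets me approximate the $\theta_i$ uniformly on $A_0$ by global sections $\sigma_1,\dots,\sigma_n$ of $\cJ E$ over $V$; a sufficiently close approximation still spans $E$ over $A_0$ (an open condition) while retaining the vanishing along $X'$. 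Running the construction of (B) with these $n$ sections yields a dominating spray with $N=n$. The only genuinely delicate point in the whole argument is the construction of $s$ in step (A) --- specifically, producing enough \emph{fiber-tangent} vector fields and checking that their composite flow is a well-defined holomorphic fiber-preserving map over $\bar V$; everything else is standard Cartan theory and linear algebra.
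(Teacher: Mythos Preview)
Your proof is correct and follows essentially the same strategy as the paper: take a Stein neighborhood $\Omega$ of $f(V_0)$ in $Z$, apply Cartan's Theorem~A to produce fiber-tangent vector fields, and compose their short-time flows. The paper streamlines your steps (A) and (B) into one by applying Theorem~A directly to the sheaf of vertical vector fields on $\Omega$ vanishing to order $r$ on $h^{-1}(X')\cap\Omega$, obtaining fields $v_1,\dots,v_N$ that already span $VTZ$ over $f(A_0)$; the spray is then simply $F(x,w)=\theta_1^{w_1}\circ\cdots\circ\theta_N^{w_N}\!\big(f(x)\big)$, with no need for your intermediate exponential $s$ or the splitting $\mu$.
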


\begin{proof}
The image $f(V_0)$ is a closed Stein subvariety of $Z|_{V_0}$, and hence it admits an open Stein neighborhood $\Omega\subset Z$. By Cartan's Theorem A there exist finitely many holomorphic vector fields $v_1,\ldots,v_N$ on $\Omega$ that are tangent to the fibers of $h$, that span the vertical tangent bundle $VT Z$ at every point of $f(A_0)$, and that vanish to order $r$ on the subvariety $h^{-1}(X') \cap\Omega$. Let $\theta^t_j$ denote the flow of $v_j$. The map
\[
    F(x,w_1,\ldots,w_N)= \theta^{w_1}_1\circ\cdots\circ \theta^{w_N}_N \circ f(x)
\]
is defined and holomorphic for all $x\in V$ and for all $w=(w_1,\ldots,w_N)$ in an open neighborhood of the origin in $\C^N$. Since $\di_{w_j} F(x,w)|_{w=0} = v_j(f(x))$ and the vector fields $v_j$ span $VT Z$ along $f(A_0)$, the differential $\di_w F(x,w)|_{w=0}  \colon \C^N \to VT_{f(x)} Z$ is surjective at every point $x \in A_0$.

For the proof of the last claim (which will not be important here) see \cite[p.\ 220]{FF:book}.
\end{proof}

\begin{remark}
Similarly it can be shown that, given a holomorphic submersion $Z\to X$, a strongly pseudoconvex Stein domain $D\Subset X$ and a section $f\colon \bar D\to Z$ of class $\cA^r(D)$ for some $r\in \Z_+$, there exists a dominating spray $F\colon \bar D\times P\to Z$ of class $\cA^r(D)$ with $F(\cdotp,0)=f$ (see \cite{BDF1,FF:manifolds}). This is used in the proof of an up-to-the-boundary version of Theorem \ref{th:CAP} \cite{BDF2}, in the construction of open Stein neighborhoods of the image $f(\bar D)\subset Z$ \cite[Theorem 1.2]{FF:manifolds}, and in many other applications.
\end{remark}

The following result is our main {\em  gluing lemma} for holomorphic sprays. This result can be viewed as a solution of a nonlinear Cousin-I problem.

%
%
%
%
\begin{proposition}
\label{gluing-sprays}
{\rm \cite[Proposition 5.9.2, p.\ 216]{FF:book}}
Assume that $h\colon Z\to X$ is a holomorphic submersion and $(D_0,D_1)$ is a \SPCP\ of class $\cC^\ell$ $(\ell \ge 2$) in $X$ (Def.\ \ref{Cartan-pair}). Set $D=D_0\cup D_1$ and $D_{0,1}=D_0\cap D_1$. Let $l\in \{0,1,\ldots,\ell\}$.

Given a holomorphic spray of sections $f\colon \bar D_0\times P_0\to X$ $(P_0\subset\C^N)$
of class $\cA^l(D_0)$ which is dominating on $\bar D_{0,1}$,
there is an open set $P\subset\C^N$ with $0\in P \subset P_0$
satisfying the following property. For every holomorphic spray of sections
$g \colon \bar D_1 \times P_0 \to X$ of class $\cA^l(D_1)$
which is sufficiently close to $f$ in $\cC^l(\bar D_{0,1} \times P_0)$
there exists a holomorphic spray of sections $f' \colon \bar D \times P \to X$
of class $\cA^l(D)$, close to $f$ in $\cC^l(\bar D_0\times P)$
(depending on the $\cC^l$-distance between $f$ and $g$ on
$\bar D_{0,1} \times P_0$), whose core section $f'_0$ is homotopic to $f_0$ on $\bar D_0$
and is homotopic to $g_0$ on $\bar D_1$.

If $f$ and $g$ agree to order $m\in\Z_+$ along $\bar D_{0,1}\times\{0\}$, then $f'$
can be chosen to agree to order $m$ with $f$ along $\bar D_{0}\times\{0\}$, and with
$g$ along $\bar D_{1}\times\{0\}$.

If $\sigma$ is the zero set of finitely many
$\cA^l(D_0)$ functions and $\sigma\cap\bar D_{0,1}=\emptyset$,
then $f'$ can be chosen so that $f'_0$
agrees with $f_0$ to a finite order on~$\sigma$.
\end{proposition}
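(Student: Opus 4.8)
The plan is to reduce this nonlinear gluing problem to the compositional splitting already established in Proposition \ref{splitting}. The idea is to encode the discrepancy between the two sprays $f$ (over $\bar D_0$) and $g$ (over $\bar D_1$) on the common part $\bar D_{0,1}$ as a single transition map close to the identity, split that map over the Cartan pair, and then precompose $f$ and $g$ with the two pieces so that the resulting sections agree on the overlap.

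First I would shrink the parameter set: choose a bounded convex neighbourhood $0\in W\subset P_0$ in $\C^N$ and a number $r\in(0,1)$ as in Proposition \ref{splitting}. Since $f$ is dominating on $\bar D_{0,1}$ and $g$ is $\cC^l$-close to $f$ there, the vertical differential $\partial_w g(x,w)|_{w=0}$ is surjective for every $x\in\bar D_{0,1}$ as well, domination being an open condition (when $l=0$ one applies the Cauchy estimates on a slightly smaller parameter set). Thus $w\mapsto g(x,w)$ is a fibre submersion near $w=0$ depending holomorphically on $x$, and — using a holomorphic right inverse to its surjective vertical differential, constructed from a local frame of $VT Z$ along $f_0(\bar D_{0,1})$, together with the implicit function theorem with parameters — I can solve $g(x,\psi(x,w))=f(x,w)$ for a map $\psi$ of class $\cA(D_{0,1}\times W)$ that is $\cC^l$-close to $(x,w)\mapsto w$. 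Setting $\gamma(x,w)=(x,\psi(x,w))$ yields a transition map of the form (\ref{map-gamma}), $\cC^l$-close to $\Id$, with
\[
   f = g\circ\gamma \qquad\text{on } \bar D_{0,1}\times W .
\]

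Next I would feed $\gamma$ into Proposition \ref{splitting}, obtaining maps $\alpha_\gamma$ over $\bar D_0\times rW$ and $\beta_\gamma$ over $\bar D_1\times rW$, depending smoothly on $\gamma$, with $\alpha_{\Id}=\beta_{\Id}=\Id$ and $\gamma\circ\alpha_\gamma=\beta_\gamma$ on $\bar D_{0,1}\times rW$. Writing $\alpha_\gamma(x,w)=(x,a(x,w))$ and $\beta_\gamma(x,w)=(x,b(x,w))$, I set $P=rW$ and define the glued spray by $f'(x,w)=f(x,a(x,w))$ for $x\in\bar D_0$ and $f'(x,w)=g(x,b(x,w))$ for $x\in\bar D_1$. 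On the overlap the two definitions coincide, because
\[
   f\circ\alpha_\gamma = (g\circ\gamma)\circ\alpha_\gamma = g\circ(\gamma\circ\alpha_\gamma) = g\circ\beta_\gamma ,
\]
so $f'$ is a well-defined spray of sections of class $\cA^l(D)$. As $\alpha_\gamma$ is close to the identity, with closeness quantitatively controlled by the $\cC^l$-distance between $f$ and $g$ on $\bar D_{0,1}\times P_0$, the spray $f'$ is $\cC^l$-close to $f$ on $\bar D_0\times P$. Its core section satisfies $f'_0=f(\cdot,a(\cdot,0))$ on $\bar D_0$, which is $\cC^0$-close to $f_0$ and hence homotopic to it through the holomorphic sections $x\mapsto f(x,t\,a(x,0))$, $t\in[0,1]$ (the segment from $0$ to $a(x,0)$ remaining in $P_0$); symmetrically $f'_0$ is homotopic to $g_0$ on $\bar D_1$.

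Finally, the two refinements follow directly from the corresponding additions to Proposition \ref{splitting}. If $f$ and $g$ agree to order $m$ along $\bar D_{0,1}\times\{0\}$, then $\gamma$ agrees with $\Id$ to order $m$ along $w=0$, so $\alpha_\gamma,\beta_\gamma$ may be chosen with the same property, whence $f'=f\circ\alpha_\gamma$ (resp.\ $g\circ\beta_\gamma$) agrees to order $m$ with $f$ (resp.\ $g$) along $\bar D_0\times\{0\}$ (resp.\ $\bar D_1\times\{0\}$). Likewise, if $\sigma\subset\bar D_0$ is the common zero set of finitely many $\cA^l(D_0)$ functions with $\sigma\cap\bar D_{0,1}=\varnothing$, I would invoke the subvariety version of Proposition \ref{splitting} to make $\alpha_\gamma$ tangent to the identity to the prescribed order along $\sigma\times rW$; then $f'_0=f(\cdot,a(\cdot,0))$ agrees with $f_0$ to that order on $\sigma$. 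I expect the main obstacle to be the construction in the second paragraph: producing the transition map $\gamma$ with the correct $\cA^l$ boundary regularity and the quantitative closeness to $\Id$, uniformly in $x\in\bar D_{0,1}$, from nothing more than the domination of $f$ and the closeness of $g$. Once $\gamma$ is in hand, the remainder is a bookkeeping application of the already-proved splitting lemma.
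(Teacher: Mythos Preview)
Your proposal is correct and follows essentially the same route as the paper: construct a transition map $\gamma$ close to the identity satisfying $f=g\circ\gamma$ on $\bar D_{0,1}\times P_1$ (the paper cites \cite[Lemma 5.9.3]{FF:book} for this, invoking the implicit function theorem together with the existence of a complementary subbundle of class $\cA^l$ to the kernel of $\partial_w f$, which is exactly the ``holomorphic right inverse'' you describe), then apply Proposition~\ref{splitting} to split $\gamma$ and amalgamate $f\circ\alpha$ with $g\circ\beta$. Your treatment of the homotopies and of the two interpolation refinements also matches the paper's.
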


\begin{proof}
By Lemma 5.9.3 in \cite[p. 216]{FF:book} there exist a domain $P_1\Subset P_0$ containing the origin and a transition map $\gamma \colon\bar D_{0,1} \times P_1 \to \bar D_{0,1}\times \C^N$ of the form 
\[
	\gamma(x,w)=(x,\psi(x,w)),\qquad x\in \bar D_{0,1},\ w\in P_1
\]
and of class $\cA^l(D_{0,1}\times P_1)$, close to the identity map $\Id(x,w)= (x,w)$ in the $\cC^l$ topology (the closeness depending on the $\cC^l$ distance between $f$ and $g$ on $\bar D_{0,1} \times P_0$), satisfying
\[
    f=g \circ\gamma \qquad {\rm on}\ \bar D_{0,1}\times P_1.
\]
(The cited result follows from the implicit function theorem and the fact that every complex vector subbundle of class $\cA^l(D_{0,1})$  of the trivial bundle $\bar D_{0,1}\times \C^N$ admits a complementary subbundle of the same class.)

Let $P \Subset P_1$ be a domain containing the origin $0\in\C^N$. If $\gamma$ is sufficiently $\cC^l$-close to the identity on $\bar D_{0,1}\times P_1$ then by Proposition \ref{splitting} there exist maps
\[
    \alpha \colon\bar D_0\times P\to \bar D_0\times\C^N,\quad   \beta\colon\bar D_1\times P\to \bar D_1\times\C^N,
\]
of class $\cA^l$ on their respective domains and satisfying
\[
    \gamma\circ\alpha = \beta \qquad {\rm on}\ \bar D_{0,1}\times P.
\]
From this and $f=g \circ\gamma$ it follows that
\[
    f\circ\alpha = g\circ \beta \quad \hbox{on}\ \bar D_{0,1}\times P.
\]
Hence $f\circ\alpha$ and  $g\circ \beta$ amalgamate into a holomorphic spray $f'\colon \bar D\times P\to Z$ with the stated properties.

The additions concerning interpolation are easily achieved by using the corresponding version of Proposition \ref{splitting}; see the paragraph following it.  
\end{proof}

Using these tools we now prove the main building block for our Theorem \ref{th:CAP}.

\begin{lemma}[\bf Approximate extension to a special convex bump]
\label{extending-to-bump}
Assume that $X$ is a complex space, $X'\subset X$ is a closed complex subvariety containing $X_\sing$, and $(A,B)$ is a special Cartan pair in $X$. Let $\pi \colon Z\to X$ be a holomorphic submersion. Assume that there is an open set $U\subset X_\reg$ with $B\subset U$ such that the restriction $Z|_{U}$ is isomorphic to a trivial bundle $U\times Y$ whose fiber $Y$ enjoys \CAP. Then every holomorphic section $f\colon V_0\to Z$ on an open set $V_0\supset A$ can be approximated uniformly on $A$ by sections $f'$ that are holomorphic over an open neighborhood of $A\cup B$ and agree with $f$ to a given order $m\in\N$ along the subvariety $X'$. 
\end{lemma}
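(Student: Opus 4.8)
The plan is to reduce the extension problem to a gluing of two holomorphic sprays of sections over the Cartan pair $(A,B)$, the second of which I will manufacture from the first by invoking $\CAP$ of the fiber $Y$ in a suitably enlarged dimension. First I would pass to the convex model. Since $(A,B)$ is a special Cartan pair (Def.\ \ref{Cartan-pair}(III)), there are holomorphic coordinates on $U\subset X_\reg$ in which $B$, $A\cap B$ and $U\cap(A\cup B)$ are strongly convex, and $Z|_U\cong U\times Y$. Applying Proposition \ref{SmoothingCP} I would replace $(A,B)$ by a smooth \SPCP\ $(D_0,D_1)$ with $A\subset D_0$ and $B\subset D_1\Subset U$, shrinking so that $\bar D_1$ and the overlap $\bar D_{0,1}=\overline{D_0\cap D_1}$ are compact convex sets in the coordinates and $\bar D_{0,1}\cap X'=\varnothing$. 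This last arrangement, possible since $B\subset X_\reg$ and the bump may be taken away from $X'$, is forced on us: the domination we need over the overlap (supplied by Lemma \ref{local-spray}) is available only off $X'$, and the splitting of Proposition \ref{splitting} breaks down if $X'$ meets $\bar D_{0,1}$. Under the trivialization, sections over subsets of $U$ are identified with $Y$-valued maps.

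Next I would thicken $f$ to a dominating spray. By Lemma \ref{local-spray}, applied over a Stein neighborhood of $A$, I embed $f$ into a holomorphic spray of sections $F\colon \bar D_0\times P\to Z$ with convex parameter set $0\in P\subset\C^N$, core $F(\cdot,0)=f$, such that $F$ is dominating on $\bar D_{0,1}$ and $F(\cdot,w)$ agrees with $f$ to order $m$ along $X'\cap D_0$ for every $w\in P$. The key step then uses $\CAP$. Via the trivialization, the restriction of $F$ to the overlap is a holomorphic map $\phi\colon \bar D_{0,1}\times P\to Y$ defined near the compact convex set $\bar D_{0,1}\times \bar P'\subset\C^{n+N}$ (for $P'\Subset P$). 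Since $Y$ enjoys $\CAP$, hence $\CAP_{n+N}$, I can approximate $\phi$ uniformly on $\bar D_{0,1}\times\bar P'$ by an entire map $\C^{n+N}\to Y$; restricting to a neighborhood of the convex set $\bar D_1\times\bar P'$ and reinserting the trivialization yields a holomorphic spray of sections $G\colon \bar D_1\times P'\to Z$ which, by the Cauchy estimates, is $\cC^l$-close to $F$ on $\bar D_{0,1}\times P''$ for a slightly smaller $P''$. The whole point of approximating the \emph{spray} (a map off a convex set in the enlarged space), rather than merely its core section, is that the resulting $G$ is automatically close to $F$ on the overlap, which is exactly what the gluing lemma demands.

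Finally I would glue. Applying Proposition \ref{gluing-sprays} to the dominating spray $F$ over $D_0$ and the nearby spray $G$ over $D_1$ produces a holomorphic spray $F'\colon \bar D\times P'''\to Z$ over $D=D_0\cup D_1\supset A\cup B$ that is $\cC^l$-close to $F$ over $\bar D_0$. Its core $f'=F'(\cdot,0)$ is then holomorphic on a neighborhood of $A\cup B$ and approximates $f$ uniformly on $A$ as closely as desired, by taking $G$ close enough to $F$. The jet interpolation along $X'$ survives because the interpolation additions in Proposition \ref{splitting}, and hence in Proposition \ref{gluing-sprays}, keep the splitting maps tangent to the identity to order $m$ along $X'$ over $\bar D_0$ and $\bar D_1$ — using precisely that $X'\cap\bar D_{0,1}=\varnothing$ — while $F$ itself already matches $f$ to order $m$ along $X'\cap D_0$.

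I expect the main obstacle to be the gluing of the two sprays, that is, the nonlinear Cousin-I problem resolved by Proposition \ref{splitting}. It is here that domination of $F$ on $\bar D_{0,1}$ is indispensable, converting the additive splitting furnished by the bounded $\dibar$-solution operator into the compositional splitting $\gamma\circ\alpha_\gamma=\beta_\gamma$; and it is here that the three competing constraints — domination on the overlap, the $\cC^l$-closeness of $G$ to $F$, and the interpolation requirement $X'\cap\bar D_{0,1}=\varnothing$ — must all be reconciled within the same choice of smoothed Cartan pair.
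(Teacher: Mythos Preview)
Your proposal is correct and follows essentially the same three-step scheme as the paper: thicken $f$ into a dominating spray via Lemma \ref{local-spray}, invoke $\CAP$ of $Y$ on the compact convex set $\phi(A\cap B)\times Q\subset\C^{n+N}$ to approximate the spray (not merely its core) by a holomorphic map defined over a neighborhood of $B$, and then glue the two sprays with Proposition \ref{gluing-sprays}, the interpolation along $X'$ coming from the additions to Proposition \ref{splitting} under the hypothesis $X'\cap\bar D_{0,1}=\varnothing$. Your explicit passage through Proposition \ref{SmoothingCP} to a smooth \SPCP\ is exactly what the paper does implicitly (and spells out in the closing paragraph of its proof for the general complex-space case).
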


\begin{proof} We proceed in three steps.

{\em Step 1: Thickening.} Lemma \ref{local-spray} furnishes an open set $V$ in $X$ with $A\subset V\subset V_0$, an open set $0\in W\subset \C^N$,
and a holomorphic spray of sections $F\colon V\times W\to Z$ with the core $f=F(\cdotp,0)$ on $V$. Using the trivialization $Z|_U\cong U\times Y$ we can write $F(x,w)=(x,F'(x,w))\in X\times Y$ for $x\in U\cap V$ and $w\in W$. Thus $F'$ is a holomorphic map $(U\cap V) \times W\to Y$ with values in $Y$.

\smallskip {\em Step 2: Approximation.}
By shrinking the set $U$ around $B$ if necessary, we may assume that it satisfies the definition of a special Cartan pair (Def. \ref{Cartan-pair} (III)). In particular, we have a holomorphic coordinate map $\phi\colon U\to U'\subset \C^n$ such that the sets $\phi(B)$, $\phi(A\cap B)$ and $\phi(U\cap (A\cup B))$ are strongly convex with $\cC^2$ boundaries in $U'$. (The last set may have corners in $\C^n$, but this is unimportant.) 

Choose a compact cube $Q \subset  W$ containing $0\in \C^N$ in its interior. The product $\phi(A\cap B)\times Q$ is then a compact convex set in $\C^{n+N}$. Since the manifold $Y$ enjoys CAP, we can approximate the map $F'$ with values in $Y$, uniformly on a neighborhood of $(A\cap B) \times Q$ in $\C^{n+N}$, by a holomorphic map $G' \colon U'\times Q\to Y$, where $U'$ is an open set in $X$ satisfying $B \subset U'\subset U$. 

It may be worth emphasizing that this is the unique place in the proof of Theorem \ref{th:CAP} where CAP of the fiber is invoked!

\smallskip {\em Step 3: Gluing.} Choose a number  $0<r<1$. If the approximation in Step 2 is sufficiently close then by Proposition \ref{gluing-sprays} we can glue $F$ and $G$ into a holomorphic spray of sections $\wt F \colon V' \times rQ \to Z$, where  $V'\subset X$ is an open neighborhood of $A\cup B$. The core section $\tilde f=\wt F(\cdotp,0) \colon V'\to Z$ then satisfies the conclusion of Lemma \ref{extending-to-bump}. 

Note that Proposition \ref{gluing-sprays} applies verbatim if $(A,B)$ is a strongly pseudoconvex Cartan pair in a Stein manifold $X$; this will be the case in our application of this lemma in the following subsection. The general case when $X$ is a complex space reduces to the special case by embedding a neighborhood of $A\cup B$ in $X$ as a subvariety $\Sigma$ in a Euclidean space $\C^n$ and approximating $(A,B)$ in the embedded picture from the outside by a strongly pseudoconvex Cartan pair in $\C^n$. The transition map between the two sprays, initially defined over a neighborhood of $A\cap B$ in $X$, extends holomorphically to a neighborhood of $A\cap B$ in $\C^n$, for example by using a holomorphic retraction onto a neighborhood of $A\cap B$ in $\Sigma$, or by using a bounded extension operator. (See the proof of Proposition 5.8.4 in \cite[p.\ 214]{FF:book}.) 
\end{proof}

\subsection{Proof of the Oka principle}
\label{ss:proof}
We are now ready to prove the following basic case of Theorem \ref{th:CAP}. The more advanced versions (including interpolation on a subvariety, the parametric case, for sections of stratified fiber bundles, etc.) are obtained by similar analytic tools, but adapting the geometric construction to the case at hand. The proof given here and its generalizations can be found in \S 5.10--\S 5.13 of \cite{FF:book}.

\begin{theorem}[\bf The Basic Oka Property with Approximation]

\label{th:BOP}
Let $\pi\colon Z\to X$ be a holomorphic fiber bundle over a Stein manifold $X$ whose fiber $Y$ enjoys \CAP. Then every continuous section $f_0\colon X\to Z$ is homotopic to a holomorphic section $f_1\colon X\to Z$; if $f_0$ is holomorphic in a neighborhood of a compact $\cO(X)$-convex subset $K\subset X$ then the homotopy $f_t\colon X\to Z$ $(t\in [0,1])$ from $f_0$ to $f_1$ can be chosen so that $f_t$ is holomorphic near $K$ and uniformly close to $f_0$ on $K$ for each $t$.
\end{theorem}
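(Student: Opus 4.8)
The plan is to prove the theorem by an exhaustion of the Stein manifold $X$, extending a holomorphic section step by step across an increasing family of strongly pseudoconvex sublevel sets of a Morse exhaustion function, and then passing to the limit. First I would fix a smooth \spsh\ Morse exhaustion function $\rho\colon X\to\R$ for which $K$ is contained in the sublevel set $A_0=\{\rho\le c_0\}$, a strongly pseudoconvex Stein neighborhood of $K$ on which $f_0$ is already holomorphic; this is possible because $K$ is $\cO(X)$-convex. I would then choose an increasing sequence of regular values $c_0<c_1<c_2<\cdots\to\infty$ such that each slab $\{c_j\le\rho\le c_{j+1}\}$ contains at most one nondegenerate critical point of $\rho$. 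Writing $A_j=\{\rho\le c_j\}$, the goal is to build sections $f_j$ holomorphic near $A_j$, together with connecting homotopies, so that $f_{j+1}$ approximates $f_j$ as closely as we wish on $A_j$ and is homotopic to $f_j$ through sections holomorphic near $A_j$. With the approximation errors decreasing geometrically, the sequence $f_j$ converges uniformly on compacta to a holomorphic section $f_1$ (using that $A_j$ is Runge in $A_{j+1}$), and concatenating the homotopies yields the required homotopy starting from $f_0$; the control exercised on $A_0\supset K$ delivers the final approximation statement.

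The inductive step splits into two cases. In the \emph{noncritical case}, when $\{c_j<\rho\le c_{j+1}\}$ contains no critical point, the set $A_{j+1}$ is a noncritical strongly pseudoconvex extension of $A_j$, so by Lemma \ref{lem:extension-by-bumps} it is obtained from $A_j$ by attaching finitely many convex bumps; taking the open cover in that lemma to consist of trivializing neighborhoods for $\pi$, we may assume each bump lies in an open set $U$ with $Z|_U\cong U\times Y$. I would then apply Lemma \ref{extending-to-bump} once per bump: since the fiber $Y$ enjoys \CAP, each application extends the holomorphic section across one bump while approximating it on the previous set, and the gluing in Proposition \ref{gluing-sprays} supplies the homotopy between the old and new core sections. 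Because $A_{j+1}$ deformation retracts onto $A_j$, no new topology appears, so the resulting holomorphic section automatically lies in the homotopy class of $f_0$ over $A_{j+1}$. This is the routine case, in which \CAP\ does all the work.

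The main obstacle is the \emph{critical case}, where $\{c_j<\rho\le c_{j+1}\}$ contains a single nondegenerate critical point $p$ of Morse index $\lambda\le n=\dim_\C X$. Here the topology genuinely changes: passing the critical level amounts to attaching a handle whose core is a totally real disc $D^\lambda$ glued to $bA_j$. The strategy is to reduce to the noncritical case. Using the quadratic normal form of $\rho$ near $p$ furnished by Lemma \ref{spsh-normal-form}, I would first extend $f_j$ from a neighborhood of $A_j$ to a neighborhood of $A_j\cup D^\lambda$. This extension must be pinned to the correct homotopy class, which is exactly where the given continuous section $f_0$ is used: one extends $f_0$ restricted to the handle core, applies a local Mergelyan approximation theorem for $Y$-valued maps on the totally real disc to make the extension holomorphic there, and amalgamates by the splitting and gluing of sprays. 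A thin strongly pseudoconvex neighborhood $\Omega$ of $A_j\cup D^\lambda$ is then a deformation retract of $A_{j+1}$, and $A_{j+1}$ is a noncritical extension of $\Omega$, so the construction is completed by one more application of the noncritical case. The delicate points — the normal-form analysis near $p$, the Mergelyan step on the totally real core, and the verification that the extended holomorphic section stays in the homotopy class of $f_0$ — are precisely where the genuine analytic difficulty of the Oka principle resides, whereas \CAP\ of the fiber enters only through the clean bump extension of Lemma \ref{extending-to-bump}.
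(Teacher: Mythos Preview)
Your proposal is correct and follows essentially the same route as the paper: Morse exhaustion by strongly pseudoconvex sublevel sets, the noncritical step via convex bumps and Lemma \ref{extending-to-bump}, and the critical step via the quadratic normal form of Lemma \ref{spsh-normal-form}, a Mergelyan approximation over the totally real core disc, and reduction back to the noncritical case. One organizational difference worth noting: the paper keeps a \emph{global} continuous section $f_t\colon X\to Z$ alive at every stage (holomorphic only over the current sublevel set), so that when the handle core $E$ appears, the section is already defined there and one simply approximates it holomorphically; you instead propose to graft $f_0|_{D^\lambda}$ onto the current local holomorphic section $f_j$, which requires an extra matching step near $bE$ since $f_j$ need not agree with $f_0$ there. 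Maintaining the global continuous section throughout, as the paper does, avoids this bookkeeping and makes the homotopy-class tracking automatic.
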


\begin{proof}
Pick an open neighborhood $U\subset X$ of the set $K$ such that $f_0$ is holomorphic on $U$. Choose a smooth \spsh\ Morse exhaustion function $\rho\colon X\to\R$ such that $\rho<0$ on $K$ and $\rho>0$ on $X\setminus  U$. Let $p_1,p_2,p_3,\ldots$ be the critical points of $\rho$ in $\{\rho>0\}$, ordered so that $0<\rho(p_1)<\rho(p_2)<\rho(p_3)<\cdots$. Choose a sequence of numbers $0 = c_0 < c_1 < c_2<\cdots$ with $\lim_{j\to\infty} c_j=+\infty$ such that $c_{2j-1} < \rho(p_j) < c_{2j}$ for every $j=1,2,\ldots$, and these two values are sufficiently close to $\rho(p_j)$ (this condition will be specified later). If there are only finitely many $p_j$'s, we choose the remainder of the sequence $c_j$ arbitrarily. We subdivide the parameter interval $[0,1]$ of the homotopy into subintervals $I_j=[t_j,t_{j+1}]$ with $t_j = 1-2^{-j}$ $(j=0,1,2,\ldots)$. Choose a distance function $\dist$ on $Z$ induced by a complete Riemannian metric. Fix a number $\epsilon>0$. 

We shall inductively construct a homotopy of sections $f_t\colon X\to Z$ $(0\le t<1)$ such that for every $j\in\Z_+$ and $t\in [t_j,t_{j+1}]$ the section $f_t$ is holomorphic in a neighborhood of the set $K_j=\{x\in X\colon \rho(x)\le c_j\}$ and satisfies
\[
    \sup \bigl\{
    \dist(f_t(x),f_{t_j}(x)) \colon x\in K_j,\ t\in [t_j,t_{j+1}] \bigr\}
    < 2^{-j-1}\epsilon.
\]
The limit section $f_1=\lim_{t\to 1}f_t\colon X\to Z$ is then holomorphic on $X$ and satisfies 
\[
	\sup\{\dist(f_1(x),f_0(x))\colon x\in K_0\} <\epsilon. 
\]
Assuming inductively that a homotopy $\{f_t\}$ with the stated properties has been constructed for $t\in [0,t_j]$, we explain how to find it for $t\in [t_j,t_{j+1}]$.

\smallskip
{\em The noncritical case:}
If $j$ is even then $\rho$ has no critical points in the set 
\[
	K_{j+1}\setminus \mathring K_j=\{x\in X\colon c_j\le \rho(x)\le c_{j+1}\}. 
\]
By Lemma \ref{lem:extension-by-bumps} there are compact sets $K_j=A_0\subset A_1 \subset \cdots \subset A_m=K_{j+1}$ such that for every $k=0,1,\ldots,m-1$ we have $A_{k+1}=A_k\cup B_k$, where $B_k$ is a convex bump on $A_k$. In addition, we may choose the bumps $B_k$ small enough such that the bundle $Z\to X$ is trivial in an open neighborhood of any $B_k$. By Lemma \ref{extending-to-bump} we can successively extend the section across each bump $B_k$, approximating the previous section on $A_k$. In finitely many steps we thus obtain a holomorphic section $f_{t_{j+1}}$ in a neighborhood of $K_{j+1}$ which approximates $f_{t_j}$ as closely as desired on $K_j$. By the construction, the two sections are homotopic to each other over a neighborhood of $K_j$ so that the entire homotopy remains close to $f_{t_j}$. (In fact this is always true if these sections are close enough over a neighborhood of $K_j$.) Since there is no change of topology from $K_j$ to $K_{j+1}$, this homotopy can be extended continuously, first to a neighborhood of $K_{j+1}$, and then to all of $X$, by applying a cut-off function in the parameter of the homotopy.

\smallskip
{\em The critical case:} Now $j$ is odd, $j=2l-1$, and $\rho$ has a unique critical point $p=p_l$ in $\mathring K_{j+1}\setminus K_{j}$. 

We begin with some geometric considerations. The following well-known lemma gives the quadratic normal form for strongly plurisubharmonic functions at Morse critical points; see \cite[Lemma 3.9.1, p.\ 88]{FF:book} and the references therein.

\begin{lemma}
\label{spsh-normal-form}
Let $\rho$ be a strongly plurisubharmonic function of class $\cC^2$ in a neighborhood of the origin in $\C^n$, with a Morse critical point of index $k$ at $0\in\C^n$. Then $k\in\{0,1,\ldots,n\}$. Write $z=(z',z'') = (x'+\I y',x''+\I y'') \in \C^k\times \C^{n-k}$. After a $\C$-linear change of coordinates on $\C^n$ we have
\begin{equation}
\label{eqn:psh-normal}
        \rho(z)= \rho(0) -|x'|^2 + |x''|^2 +\sum_{i=1}^n \lambda_i y_i^2 +o(|z|^2)
\end{equation}
where $\lambda_i > 1$ for $i\in \{1,\ldots,k\}$ and $\lambda_i\ge 1$ for $i \in\{k+1,\ldots,n\}$.
\end{lemma}

Choose an open neighborhood $U\subset X$ of the point $p=p_l$ and a coordinate map $\phi \colon U\to P$ onto a polydisc $P\subset \C^n$ such that the function $\wt \rho=\rho\circ\phi^{-1} \colon P\to \R$ is given by (\ref{eqn:psh-normal}). By a small modification of $\rho$ supported in a neighborhood of $p$ we may also assume that the remainder term vanishes identically. 

If $k=0$ then $\rho$ has a local minimum at $p$. In this case the set $\{\rho \le c_{j+1}\}$ is the disjoint union of $\{\rho \le c_{j}\}$ and another connected component $W_j$ which appears around the point $p$. Assuming as we may that $c_{j}-c_{j-1}>0$ is small enough, the bundle $Z\to X$ is trivial over $W_j$, and we may extend the section $f_{t_j}$ in an arbitrary way to $W_j$ (for example, as a constant section in some local trivialization of the bundle). This gives a holomorphic section $f_{t_{j+1}}$ of $Z$ over $K_{j+1}$ and the induction may proceed. 

In the sequel we assume that $k\ge 1$. Pick a number $\alpha>0$ such that the set
\[
    X_{\alpha} = \{x\in X \colon \rho(p) -\alpha < \rho(x) < \rho(p) + 3\alpha \}
\]
does not contain any critical point of $\rho$ other than $p$, and we have 
\[
        \bigl\{ (x'+\I y',x''+\I y'')\in \C^k\times \C^{n-k} \colon
             |x'|^2 \le \alpha,\ |x''|^2 + \sum_{i=1}^n \lambda_i y_i^2 \le 4\alpha \bigr\}  \subset P.
\]
The stable manifold of the critical point at $0$ is the totally real subspace $\R^k\subset \C^n$ given by $y'=0,\ z''=0$. Let
\begin{equation}
\label{eqn:discE}
    E' = \{(x'+\I y',z'')\in\C^n \colon  y'=0,\ z''=0,\ |x'|^2\le \alpha \};
\end{equation}
this is a $k$-dimensional totally real disc whose boundary sphere $bE'$ is contained in the level set $\{\wt \rho=-\alpha\}$. The preimage $E=\phi^{-1}(E')\subset X$ is a local stable manifold at $p$ with $bE\subset \{\rho=\rho(p)-\alpha\}$. By the noncritical case we may assume that the number $c_j$ (with the property that a holomorphic section already exists on $\{\rho\le c_j\}$) is so close to $\rho(p)$ that $\rho(p) - \alpha < c_j$.

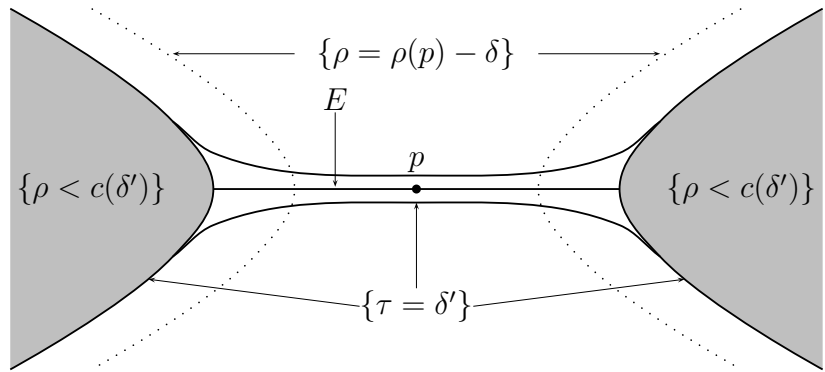
\begin{figure}[ht]
\psset{unit=0.6cm, xunit=1.8, linewidth=0.7pt} 
\begin{pspicture}(-6.5,-4.3)(6,5)

%
%
%
%

\pscustom[fillstyle=solid,fillcolor=lightgray]
{\pscurve[liftpen=1](5,4)(3,1.5)(2.5,0)(3,-1.5)(5,-4)                 
}

\pscustom[fillstyle=solid,fillcolor=lightgray]
{
\pscurve[liftpen=1](-5,4)(-3,1.5)(-2.5,0)(-3,-1.5)(-5,-4)             
}

%
%
%
\psline(-2.5,0)(2.5,0)                                            
\psecurve(5,4)(3,1.5)(2.5,0.8)(0,0.3)(-2.5,0.8)(-3,1.5)(-5,4)         
\psecurve(5,-4)(3,-1.5)(2.5,-0.8)(0,-0.3)(-2.5,-0.8)(-3,-1.5)(-5,-4)  

\pscurve[linestyle=dotted](4,4)(2,1.5)(1.5,0)(2,-1.5)(4,-4)
\pscurve[linestyle=dotted](-4,4)(-2,1.5)(-1.5,0)(-2,-1.5)(-4,-4)

\rput(0,3){$\{\rho = \rho(p)-\delta \}$}
\psline[linewidth=0.2pt]{->}(1.5,3)(3,3)
\psline[linewidth=0.2pt]{->}(-1.5,3)(-3,3)

%
%
%
%
\rput(4,0) {$\{\rho < c(\delta')\}$}
\rput(-4,0){$\{\rho < c(\delta')\}$}

\psline[linewidth=0.2pt]{->}(0,-2.2)(0,-0.3)
\psline[linewidth=0.2pt]{<-}(-3.3,-2)(-0.7,-2.6)
\psline[linewidth=0.2pt]{->}(0.7,-2.6)(3.3,-2)
\rput(0,-2.6){$\{\tau = \delta'\}$}

\psline[linewidth=0.2pt]{->}(-1,1.7)(-1,0.05)
\rput(-1,2){$E$}

\psdot(0,0)
\rput(0,0.6){$p$}

\end{pspicture}
\caption{The level sets of $\tau$. (\cite[p.\ 94, Fig.\ 3.5]{FF:book})}
\label{Fig:criticallevel}
\end{figure}

For a sufficiently small $\delta >0$ there exists a smooth \spsh\ function $\tau\colon \{x\in X\colon \rho(x) < \rho(p) + 3\delta\} \to \R$ with the following properties:
\begin{itemize}
\item[\rm (i)]
$\{\rho\le c_j\} \cup E \subset \{\tau \le 0\} \subset \{\rho \le \rho(p) -\delta\} \cup E$,
\item[\rm (ii)]
$\{\rho \le \rho(p)+\delta\} \subset \{\tau \le 2\delta \} \subset \{\rho < \rho(p)+3\delta\}$, and
\item[\rm (iii)]
$\tau$ has no critical values in $(0,3\delta)$.
\end{itemize}
A typical level set $\{\tau=\delta'\}$ for small $\delta'>0$ is shown in Fig.\ \ref{Fig:criticallevel}. Outside a neighborhood of $p$ the level set $\{\tau=\delta'\}$ coincides with a certain level set $\{\rho=c(\delta')\}$ of $\rho$. 

We outline the construction of $\tau$ and refer for the details to \cite[\S 3.10]{FF:book}. 

Set $\lambda=\min\{\lambda_1,\ldots,\lambda_k\}>1$. Pick a number $\mu \in (1,\lambda)$ and set 
\[
	t_0 = t_0(\alpha,\mu) = \alpha \bigl(1- 1/\mu \bigr)^2 \in (0,\alpha).
\]
It is easy to find a smooth convex increasing function $h\colon \R \to [0,+\infty)$ satisfying the following conditions (see \cite[p.\ 92]{FF:book}): 
\begin{itemize}
\item[]
\begin{enumerate}
\item[(i)]   $h(t)=0$ for $t\le t_0$,
\item[(ii)]  for $t\ge \alpha$ we have $h(t)=t - t_1$ with $t_1=\alpha - h(\alpha) \in (t_0,\alpha)$,
\item[(iii)] for $t_0\le t\le \alpha$ we have $t-t_1 \le h(t) \le t - t_0$, and
\item[(iv)]  for all $t\in\R$ we have $0\le \dot h(t) \le 1$ and $2t\ddot h(t) + \dot h(t) < \lambda$.
\end{enumerate}
\end{itemize}
These properties of $h$ imply that the smooth function $\wt \tau \colon \C^n\to \R$ defined by
\[
    \wt \tau(z)= - h(|x'|^2) + |x''|^2 + \sum_{j=1}^n \lambda_j y_j^2
\]
is strongly plurisubharmonic (see \cite[p.\ 93]{FF:book}). Obviously $\wt \tau$ has no critical values in $(0,+\infty)$. Set $U'=\{q\in U \colon |x'(q)|^2 < \alpha\} \subset X$ and $V = \{\rho<\rho(p)+3\alpha\} \subset X$. We define a function $\tau \colon V \to\R$ as follows:
\[
    \tau(x) = \begin{cases} \rho(p)+\wt\tau(z(x)),     & x\in U\cap V; \cr
                     \rho(x) + t_1,                    & x\in V\setminus U.  
              \end{cases}       
\]
Here $t_1=\alpha - h(\alpha)$ is as in (ii) above. The properties of $h$ ensure that the two definitions agree on $(U\setminus U')\cap V$. It is easily verified that $\tau$ satisfies the stated conditions.

We now explain how to find the next holomorphic section $f_{t_{j+1}}$. We proceed in four steps, hence dividing the parameter interval $[t_j,t_{j+1}]$ into four subintervals. The number $\delta>0$ is as above (see the properties of $\tau$).

\smallskip
{\em Step 1:}  By the noncritical case we may assume that the section $f_{t_j}\colon X\to Z$ is holomorphic on a neighborhood of the set $\{\rho\le \rho(p)-\delta\} \subset X$.

\smallskip
{\em Step 2:} By Theorem 3.7.2 in \cite[p.\ 81]{FF:book} we can approximate $f_{t_j}$, uniformly on the set $\{\rho\le \rho(p) -\delta\}\cup E$, by sections that are holomorphic in small open neighborhoods of this set in $X$. The cited theorem is a local Mergelyan approximation theorem for manifold-valued maps on a union of a compact strongly pseudoconvex Stein domain $D\subset X$ and a smooth totally real submanifold $E\subset X\setminus \mathring D$ which is attached to the domain $J$-orthogonally along a Legendrian submanifold $bE\subset bD$.

\smallskip
{\em Step 3:} 
By property (i) of $\tau$ there is a $\delta'>0$ such that the set $\{\tau \le \delta'\}$ is contained in the neighborhood of $\{\rho\le \rho(p) -\delta\}\cup E$ on which the section from Step 2 is holomorphic. Applying the noncritical case with the function $\tau$ we deform the section from Step 2 to a section that is holomorphic on a neighborhood of  $\{\tau\le 2\delta\}$. 

\smallskip
{\em Step 4:} By property (ii), the section from Step 3 is holomorphic on $\{\rho\le \rho(p)+\delta\}$. Applying the noncritical case, this time again with the function $\rho$, we deform it to a holomorphic section on the set $K_{j+1}=\{\rho\le c_{j+1}\}$. 

\smallskip
These four steps together complete the construction of $f_{t_{j+1}}$, so the induction may proceed. This completes the proof of Theorem \ref{th:BOP}.
\end{proof}

\smallskip \noindent
{\bf Acknowledgements.}
My sincere thanks go to the organizers of the Winter School KAWA-4 in Toulouse in January 2013 for their kind invitation to give lectures and to prepare this survey. I wish to thank Finnur L\'arusson for writing the appendix to this paper and for his valuable suggestions on the presentation. Kind thanks also to Alexander Hanysz and Tyson Ritter for their careful reading of an initial draft, and to Peter Landweber for numerous suggestions that helped me to improve the presentation. Finally, I thank the referee for the remarks and references, and for suggesting that I include the new developments obtained since the time of the first submission.


\section{Appendix: The homotopy-theoretic viewpoint (by Finnur L\'arusson)}
\label{sec:Appendix}

\subsection{The connection with abstract homotopy theory}
\label{ss:connection}
Oka manifolds and Oka maps naturally fit into an abstract homotopy-theoretic framework, not merely by way of analogy, but in precise, rigorous terms.  This appendix, which is essentially Section 7 of the survey \cite{FLsurvey} expanded by a factor of two, provides an overview of how this comes about.  The papers in which the connection between abstract homotopy theory and Oka theory was developed are  \cite{Larusson1, Larusson2, Larusson3, Larusson4}.

Abstract homotopy theory, also known as homotopical algebra, was founded by D.~Quillen in his 1967 monograph \cite{Quillen1967}.  The fundamental notion of the theory is the concept of a model category, or a model structure on a category.  A model structure is an abstraction of the essential features of the category of topological spaces that make ordinary homotopy theory possible.  Model structures are good for many things.  They have been introduced and applied in various areas of mathematics outside of homotopy theory, for example in homological algebra, algebraic geometry, category theory, and theoretical computer science.  Here we view them as a tool for studying lifting and extension properties of holomorphic maps.  Model structures provide a framework for investigating two classes of maps such that the first has the right lifting property with respect to the second and the second has the left lifting property with respect to the first in the absence of topological obstructions.  It is more natural, in fact, to consider homotopy lifting properties rather than plain lifting properties, that is, liftings of families of maps varying continuously with respect to a parameter in a nice parameter space rather than liftings of individual maps.

One of the main results of Gromov in his seminal paper \cite[\S 2.9]{Gromov:OP} suggests a link with homotopical algebra.  Let $T\hra S$ be the inclusion into a Stein manifold $S$ of a closed complex submanifold $T$ (we call such an inclusion a \textit{Stein inclusion}), and let $X\to Y$ be a holomorphic fibre bundle whose fibre is an elliptic manifold (let us call such a map an \textit{elliptic bundle}).  Consider a commuting square
\[ \xymatrix{
T \ar[r] \ar[d] & X \ar[d] \\ S \ar[r] & Y
}\]
where $T\to X$ and $S\to Y$ are otherwise arbitrary holomorphic maps.  A basic version of Gromov's Oka principle states that every continuous lifting in the square, that is, every continuous map $S\to X$ such that the diagram
\[ \xymatrix{
T \ar[r] \ar[d] & X \ar[d] \\ S \ar[ur] \ar[r] & Y
}\]
commutes, can be deformed through such liftings to a holomorphic lifting.  Since $T\hra S$ is a topological cofibration and $X\to Y$ a topological fibration, by elementary homotopy theory there is a continuous lifting in the square, and hence by Gromov's theorem a holomorphic lifting, if one of the two vertical maps is a homotopy equivalence (let us call a homotopy equivalence an \textit{acyclic map}).  Thus, elliptic bundles have the right lifting property with respect to acyclic Stein inclusions, and acyclic elliptic bundles have the right lifting property with respect to Stein inclusions.

Compare this with one of Quillen's axioms for a model category (see \S \ref{ss:model-categories}):  A lifting $S \to X$ exists in every commuting square
\[ \xymatrix{
T \ar[r] \ar[d] & X \ar[d] \\ S \ar[r] & Y
}\]
in which $T\to S$ is a cofibration, $X\to Y$ is a fibration, and one of them is acyclic.  It is now natural to ask whether there is a model category containing the category of complex manifolds (it is too small to carry a model structure itself) in which Stein inclusions are cofibrations, elliptic bundles are fibrations, and weak equivalences are defined topologically.  

The answer is affirmative.  There is a natural, explicit way to embed the category of complex manifolds into a model category such that Gromov's theorem becomes an instance of Quillen's axiom.  In fact, a holomorphic map is a fibration in this model structure if and only if it is an Oka map \cite[Corollary 20]{Larusson2}.  In particular, a complex manifold is fibrant as an object in the model category (meaning that the map to the terminal object is a fibration) if and only if it is Oka.  Also, a complex manifold is cofibrant if and only if it is Stein \cite[Theorem 6]{Larusson3}.

\subsection{Model categories and simplicial sets}
\label{ss:model-categories}
A model category is a category with all small limits and colimits and three distinguished classes of maps, called weak equivalences or acyclic maps, fibrations, and cofibrations, such that the following axioms hold.
\begin{enumerate}
\item[(A1)]  If $f$ and $g$ are composable maps, and two of $f$, $g$, $f\circ g$ are acyclic, then so is the third.
\item[(A2)]  The classes of weak equivalences, fibrations, and cofibrations are closed under retraction.  (Also, it follows from the axioms that the composition of fibrations is a fibration, and the pullback of a fibration by an arbitrary map is a fibration.) 
\item[(A3)] A lifting $S\to X$ exists in every commuting square
\[\xymatrix{ T \ar[r] \ar[d] & X \ar[d] \\ S \ar[r] \ar@{-->}[ur] & Y }\]
in which $T\to S$ is a cofibration, $X\to Y$ is a fibration, and one of them is acyclic.
\item[(A4)]  Every map can be functorially factored as
\[\text{acyclic fibration}\,\circ\,\text{cofibration}\] 
and as
\[\text{fibration}\,\circ\,\text{acyclic cofibration.}\]
\end{enumerate}
For the theory of model categories, we refer the reader to \cite{Dwyer-Spalinski, Hirschhorn, Hovey, May-Ponto}.

There are many examples of model categories.  A fundamental example, closely related to the category of topological spaces, is the category of simplicial sets.  Simplicial sets are combinatorial objects that have a homotopy theory equivalent to that of topological spaces, but tend to be more useful or at least more convenient than topological spaces for various homotopy-theoretic purposes.  In homotopy-theoretic parlance, the distinction between topological spaces and simplical sets is blurred and the latter are often referred to as spaces.  The prototypical example of a simplicial set is the singular set $sX$ of a topological space $X$.  It consists of a sequence $sX_0, sX_1, sX_2, \ldots$ of sets, where $sX_n$ is the set of $n$-simplices in $X$, that is, the set of all continuous maps into $X$ from the standard $n$-simplex
\[T_n=\{(t_0,\ldots,t_n)\in\R^{n+1}:t_0+\cdots+t_n=1,\, t_0,\ldots,t_n\geq 0\},\]
along with face maps $sX_n\to sX_{n-1}$ and degeneracy maps $sX_n\to sX_{n+1}$.
The weak homotopy type of $X$ is encoded in $sX$.  For an introduction to simplicial sets, we refer the reader to \cite{Goerss-Jardine, May}.

\subsection{Complex manifolds as prestacks on the Stein site}
\label{ss:prestacks}
We have claimed that the embedding of the category of complex manifolds into a model category that realizes the Oka property as fibrancy is natural and explicit, but it is still quite technical.  We shall give a sketch here; the details may be found in \cite{Larusson2}.

First of all, how could we expect to be able to do homotopy theory with complex manifolds in a way that takes not only their topology but also their complex structure into account?  The answer lies in the following key observations, which vastly expand the scope of homotopical algebra.
\begin{itemize}
\item  Not only can we do homotopy theory with individual spaces, but also with diagrams or sheaves of them.
\item  Manifolds and varieties can be thought of as sheaves of spaces, so we can do homotopy theory with them too.  The general idea is known as the Yoneda lemma.
\end{itemize}
This line of thought has found a spectacular application in V.\ Voevodsky's homotopy theory of schemes and the resulting proof of the Milnor conjecture \cite{Voevodsky}.

The gist of the Yoneda lemma, sometimes called the most basic theorem in mathematics, is that an object is determined up to isomorphism by its relationships with other objects, that is, by the system of arrows into it from all other objects.  More precisely, there is a full embedding of each small category $\mathscr M$ into the category of presheaves of sets on itself, taking an object $X$ to the presheaf $\mathscr M(\cdot, X)$.

In our case, Stein manifolds play a special role as sources of maps, so we think of a complex manifold $X$ as defining a presheaf $\mathscr O(\cdot, X)$ on the full subcategory $\mathscr S$ of Stein manifolds of the category $\mathscr M$ of complex manifolds.  The presheaf consists of the set $\mathscr O(S, X)$ of holomorphic maps $S\to X$ for each Stein manifold $S$, along with the precomposition map $\mathscr O(S_2, X)\to\mathscr O(S_1, X)$ induced by each holomorphic map $S_1\to S_2$ between Stein manifolds.  Even though $\mathscr S$ is much smaller than $\mathscr M$, it may be shown that the presheaf $\mathscr O(\cdot, X)$ determines $X$, so we have an embedding, in fact a full embedding, of $\mathscr M$ into the category of presheaves of sets on $\mathscr S$.

Each set $\mathscr O(S, X)$ carries the compact-open topology.  A map between such sets defined by pre- or postcomposition by a holomorphic map is continuous.  We may therefore consider a complex manifold $X$ as a presheaf of topological spaces on $\mathscr S$.  This presheaf has the property that as a holomorphic map $S_1\to S_2$ between Stein manifolds is varied continuously in $\mathscr O(S_1,S_2)$, the induced precomposition map $\mathscr O(S_2, X)\to\mathscr O(S_1, X)$ varies continuously as well.  We would like to do homotopy theory with complex manifolds viewed as presheaves with this property.

Somewhat unexpectedly, as explained in \cite[\S 3]{Larusson2}, there are solid reasons, beyond mere convenience, to rephrase the above entirely in terms of simplicial sets.  For the technical terms that follow, we refer the reader to \cite{Larusson2} and the references cited there.  To summarize, we turn $\mathscr S$ into a simplicial site and obtain an embedding of $\mathscr M$ into the category $\mathfrak S$ of prestacks on $\mathscr S$.  The basic homotopy theory of prestacks on a simplicial site was developed by B.~To\"en and G.~Vezzosi for use in algebraic geometry \cite{Toen-Vezzosi}.  A new model structure on $\mathfrak S$, called the {\it intermediate structure} and based on ideas of J.~F.~Jardine, later published in \cite{Jardine}, was constructed in \cite{Larusson2}.  It is in this model structure that Gromov's Oka principle finds a natural home.

The main results of \cite{Larusson2} along with Theorem 6 of \cite{Larusson3} can be summarized as follows.

\begin{theorem}
The category of complex manifolds and holomorphic maps can be embedded into a model category such that:
\begin{itemize}
\item  a holomorphic map is acyclic when viewed as a map in the ambient model category if and only if it is a homotopy equivalence in the usual topological sense.
\item  a holomorphic map is a fibration if and only if it is an Oka map.  In particular, a complex manifold is fibrant if and only if it is Oka.
\item  a complex manifold is cofibrant if and only if it is Stein.
\item  a Stein inclusion is a cofibration.
\end{itemize}
\end{theorem}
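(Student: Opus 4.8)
The plan is to carry out the construction underlying \cite{Larusson2, Larusson3} in three stages: embed $\mathscr{M}$ into a category of prestacks, equip that category with the intermediate model structure, and then verify the four characterizations one by one. The embedding is the one sketched in \S\ref{ss:prestacks}: send a complex manifold $X$ to the simplicial presheaf $S\mapsto \mathrm{sing}\,\mathscr{O}(S,X)$ on the Stein site $\mathscr{S}$, where $\mathrm{sing}$ is the singular set of the mapping space with its compact-open topology. Using that Stein manifolds — in particular points, discs, and the $\C^n$ — probe the points, the topology, and the holomorphic structure of $X$, one checks that this is a full embedding of $\mathscr{M}$ into the category $\mathfrak{S}$ of prestacks. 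A point is a Stein manifold, and evaluation there recovers the underlying topological space of $X$; this is what ties the abstract theory back to ordinary topology.

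Next I would equip $\mathfrak{S}$ with the intermediate model structure of \cite{Larusson2}. This structure sits between Jardine's local structure and the objectwise (projective) structure on simplicial presheaves: its cofibrations are the monomorphisms, and its weak equivalences are the maps carried to weak homotopy equivalences by the topological realization functor, so that the covers of the Stein topology become weak equivalences while the equivalences stay coarse enough to be detected topologically on representables. Establishing that these classes, together with the fibrations defined by the right lifting property, satisfy Quillen's axioms (A1)--(A4) — in particular the two functorial factorizations — is where I would invoke the general existence machinery for prestacks on a simplicial site due to To\"en--Vezzosi, together with Jardine's results, after checking the requisite smallness and properness hypotheses.

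With the structure in place, the first bullet is essentially immediate from the definition of weak equivalence: a holomorphic $f\colon X\to Y$ is acyclic iff its realization is a weak homotopy equivalence; since complex manifolds have the homotopy type of CW complexes, Whitehead's theorem upgrades this to an honest homotopy equivalence. The crux of the theorem is the second bullet, identifying fibrations with Oka maps (Def.\ \ref{def:Oka-map}). A fibration is a map with the right lifting property against every acyclic cofibration, and one may take the generating acyclic cofibrations to be pushout-products of horn inclusions $\Lambda^n_k\hookrightarrow\Delta^n$ with representable Stein inclusions, together with the maps coming from covers. Lifting against the first family is exactly the Serre fibration condition, while lifting against the second unwinds to the parametric section-extension problems defining POP. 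Hence proving that an Oka map is a fibration is an application of its defining parametric Oka property, made local over the base by the localization principle (Theorem \ref{localization-of-POP}) and fed by the parametric Oka principle of Theorem \ref{th:CAP} (cf.\ Corollary \ref{SES-POP}); conversely, each lifting problem in the definition of POP is encoded as a lifting against one of these generating acyclic cofibrations, so a fibration is an Oka map. Since the map from $E$ to a point is an Oka map precisely when $E$ is Oka, fibrancy of $E$ is equivalent to $E$ being an Oka manifold (cf.\ Corollary \ref{cor:up-down}).

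For the remaining two bullets I would follow \cite[Theorem 6]{Larusson3}, treating the cofibration statement first. That a Stein inclusion $T\hookrightarrow S$ is a cofibration amounts, via the left lifting property against acyclic fibrations, to the classical Oka--Cartan extension and approximation theorems for holomorphic maps out of Stein manifolds, which supply the needed liftings; taking $T=\varnothing$ then shows that every Stein manifold is cofibrant. The harder converse — that a cofibrant object is Stein — is obtained by showing that the left lifting property of $\varnothing\to X$ against all acyclic fibrations forces on $X$ exactly the extension-and-approximation behaviour characteristic of a Stein source. Throughout, I expect the genuine obstacle to be the second bullet: translating Quillen's lifting axiom (A3) into the analytic substance of the Oka principle requires the full parametric theory together with its localization, and pinning down the generating acyclic cofibrations so that they match the lifting problems of POP is where the real work lies.
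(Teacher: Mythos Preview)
The paper itself does not prove this theorem; it is stated as a summary of the main results of \cite{Larusson2} together with \cite[Theorem~6]{Larusson3}, so there is no proof in the paper against which to compare step by step. That said, your outline contains a real inconsistency that would derail the argument.

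You assert that in the intermediate model structure ``its cofibrations are the monomorphisms.'' This cannot be right: if cofibrations were monomorphisms, then the map from the initial object $\varnothing$ to any prestack would be a cofibration, so \emph{every} object, and in particular every complex manifold, would be cofibrant. That flatly contradicts the third bullet of the theorem you are trying to prove (cofibrant $\Leftrightarrow$ Stein). The intermediate structure of \cite{Larusson2} sits between the projective and the injective (Jardine) structures, but on the cofibration side it is projective-like, not injective-like: its cofibrations form a proper subclass of the monomorphisms, generated in essence by the representable Stein inclusions together with simplicial data. This is precisely what makes the characterization ``cofibrant $=$ Stein'' possible and nontrivial.

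Your own text reveals the tension: having declared cofibrations to be monomorphisms, you then argue that ``a Stein inclusion $T\hookrightarrow S$ is a cofibration amounts, via the left lifting property against acyclic fibrations, to the classical Oka--Cartan extension and approximation theorems.'' But if cofibrations were monomorphisms, no analysis would be needed: the embedding of a Stein inclusion is obviously a monomorphism of prestacks. The fact that Oka--Cartan theory is genuinely required signals that the cofibrations are the projective-type ones. Similarly, your description of the generating acyclic cofibrations (pushout-products of horn inclusions with representable Stein inclusions) is the projective picture, inconsistent with ``cofibrations $=$ monomorphisms.'' Fix the description of the cofibrations and the rest of your outline---the identification of fibrations with Oka maps via POP, and of acyclicity with topological homotopy equivalence---is along the right lines and broadly matches what is done in \cite{Larusson2,Larusson3}.
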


A characterization of those holomorphic maps that are cofibrations is missing from this result.  It may be that Stein inclusions and biholomorphisms are the only ones.

Knowing that Oka maps are fibrations in a model structure helps us understand and predict their behaviour.  For example, it is immediate by abstract nonsense that the composition of Oka maps is Oka, that a retract of an Oka map is Oka, and that the pullback of an Oka map by an arbitrary holomorphic map is Oka (it is easily seen that the pullback exists in $\mathscr M$ and agrees with the pullback in $\mathfrak S$).  Also, in any model category, the source of a fibration with a fibrant target is fibrant.  It follows that the source of an Oka map with an Oka target is Oka.  On the other hand, the fact that the image of an Oka map with an Oka source is Oka is a somewhat surprising feature of Oka theory not predicted by abstract nonsense, the reason being that the Oka property can be detected using Stein inclusions of the special kind $T\hra \C^n$, where $T$ is contractible.

\subsection{Fibrant and cofibrant models}
\label{ss:fibrant-and-cofibrant-models}
It is a familiar process in mathematics to associate to an object a closely related but better behaved object, with a good map between the two.  For example, to a topological space we can associate a CW approximation, to a simplicial set a Kan complex, and to a module over a ring a projective resolution.  These are examples of fibrant and cofibrant models (also known as approximations or resolutions).  A fibrant model for an object $X$ in a model category is a fibrant object $Z$ with an acyclic cofibration (or sometimes only an acyclic map) $X\to Z$.  Factoring the map from $X$ to the terminal object as an acyclic cofibration followed by a fibration using axiom A4 above, we see that $X$ has a fibrant model, and by axioms A1 and A3, any two fibrant models for $X$ are weakly equivalent.  The dual notion is that of a cofibrant model.

Thinking about fibrant and cofibrant models for complex manifolds in the model
structure described above leads to interesting questions.  As far as we know, these
concepts had not been considered previously.  A cofibrant model for a complex
manifold $X$ that lives in the category of complex manifolds (and not merely in the
ambient model category) is a cofibrant complex manifold $S$, that is, a Stein
manifold, with an acyclic fibration $S\to X$, that is, a surjective Oka map with
contractible fibres.  We call $S$ a \textit{Stein model} for $X$.  Note that $X$ is
Oka if and only if $S$ is elliptic.  The definitions of good and very good complex
manifolds (see \S \ref{ss:good}) were motivated by the notion of a cofibrant model. 
A very good manifold has a Stein model.  It is an open question whether all complex
manifolds do.

An example of a fibrant model for a Stein manifold $S$ is an acyclic Stein inclusion $S\hookrightarrow X$ such that the Stein manifold $X$ is Oka, that is, elliptic.  It is an open question whether every Stein manifold has such a fibrant model.  We are asking for a variant of the Remmert-Bishop-Narasimhan embedding theorem with a proper holomorphic embedding that preserves the homotopy type of $S$ and whose target retains the key properties of affine space of being Stein and elliptic.  T.~Ritter has proved that every open Riemann surface acyclically embeds into an elliptic manifold, and that when the surface is an annulus, the target may be taken to be $\mathbb C\times\mathbb C^*$  \cite{Ritter1,Ritter2}.  The latter statement is a special case of Ritter's result, proved using the embedding techniques of E.~F.~Wold, that every continuous map from a circular domain in $\mathbb C$ to $\mathbb C\times\mathbb C^*$ is homotopic to an embedding.  More general results, for finitely connected planar domains, are proved in \cite{Larusson-Ritter}.

\subsection{Affine simplices in Oka manifolds}
\label{ss:affine-simplices}
Motivated by Gromov's comments in \cite[\S 3.5.G, \S 3.5.G']{Gromov:OP}, the affine singular set $eX$ of a complex manifold $X$ was defined in \cite{Larusson4} as the simplicial set whose $n$-simplices for each $n\geq 0$ are the holomorphic maps into $X$ from the affine $n$-simplex 
\[A_n=\{(t_0,\dots,t_n)\in\C^{n+1}:t_0+\dots+t_n=1\},\]
viewed as a complex manifold biholomorphic to $\C^n$, with the obvious face maps and degeneracy maps.  If $X$ is Brody hyperbolic, then $eX$ is discrete and carries no topological information about $X$.  On the other hand, when $X$ is Oka, $eX$ is \lq\lq large\rq\rq.

A holomorphic map $A_n\to X$ is determined by its restriction to $T_n\subset A_n$, so we have a monomorphism, that is, a cofibration $eX\hookrightarrow sX$ of simplical sets.  When $X$ is Oka, $eX$, which is of course much smaller than $sX$, carries the weak homotopy type of $X$.  More precisely, the cofibration $eX\hookrightarrow sX$ is the inclusion of a strong deformation retract (\cite{Larusson4}, Theorem 1).  Even for complex Lie groups, this result appears not to have been previously known.

%
%
%
%

\end{document}